\title{Rokhlin dimension and $C^*$-dynamics}
\date{\today}
\author{Ilan Hirshberg}
\address{Department of Mathematics, Ben Gurion University of the Negev, P.O.B. 653, Be'er  Sheva 84105, Israel}
\email{ilan@math.bgu.ac.il}
\author{Wilhelm Winter}
\address{Mathematisches Institut,
Westfalische Wilhelms-Universit{\"a}t,  48149 M{\"u}nster, Germany}
\email{wwinter@uni-muenster.de}
\author{Joachim Zacharias}
\address{School of Mathematics and Statistics, University of Glasgow, University Gardens, Glasgow Q12 8QW, Scotland}
\email{Joachim.Zacharias@glasgow.ac.uk}
\theoremstyle{plain}
\newtheorem{Thm}{\sc Theorem}[section]
\newtheorem{Cor}[Thm]{\sc Corollary}
\newtheorem{Lemma}[Thm]{\sc Lemma}
\newtheorem{Prop}[Thm]{\sc Proposition}
\theoremstyle{definition}
\newtheorem{Def}[Thm]{\sc Definition}
\newtheorem{Not}[Thm]{\sc Notation}
\newtheorem{Rmk}[Thm]{\sc Remark}
\newtheorem{Rmks}[Thm]{\sc Remarks}
\renewcommand{\epsilon}{\varepsilon}
\newcommand{\B}{\mathcal{B}}
\newcommand{\A}{\mathcal{A}}
\newcommand{\D}{\mathcal{D}}
\newcommand{\Zh}{\mathcal{Z}}
\newcommand{\E}{\mathcal{E}}
\newcommand{\F}{F}
\newcommand{\T}{{\mathbb T}}
\newcommand{\R}{{\mathbb R}}
\newcommand{\N}{{\mathbb N}}
\newcommand{\Z}{{\mathbb Z}}
\newcommand{\C}{{\mathbb C}}
\newcommand{\aut}{\mathrm{Aut}}
\newcommand{\eps}{\varepsilon}
\newcommand{\dr}{\textup{dr}}
\newcommand{\nd}{\dim_{\mathrm{nuc}}}
\newcommand{\dimrokh}{\dim_{\mathrm{Rok}}}
\newcommand{\dimrokhct}{\dim^{\mathrm{c}}_{\mathrm{Rok}}}
\newcommand{\dimrokhbar}{\,\overline{\dim}_{\mathrm{Rok}}}
\newcommand{\dimrokhsingle}{\dim^{\mathrm{s}}_{\mathrm{Rok}}}
\newcommand{\dimrokhbarct}{\,\overline{\dim}^{\mathrm{c}}_{\mathrm{Rok}}}
\newcommand{\dimrokhbarsingle}{\,\overline{\dim}^{\mathrm{s}}_{\mathrm{Rok}}}
\thanks{Research partially supported by the Israel Science Foundation (grant No.\ 
1471/07), GIF (grant No.\ 1137-30.6/2011), by EPSRC (grants No.\  EP/G014019/1 and No.\ EP/I019227/1), by the DFG (SFB 878) and by the CRM Barcelona.}
\begin{document}
\begin{abstract}
We develop the concept of Rokhlin dimension for  integer and for finite group actions  on $C^{*}$-algebras. Our notion generalizes the so-called Rokhlin property, which can be thought of as Rokhlin dimension 0. We show that finite Rokhlin dimension is prevalent and appears in cases in which the Rokhlin property cannot be expected: the property of having finite Rokhlin dimension is generic for automorphisms of $\Zh$-stable $C^{*}$-algebras, where $\Zh$ denotes the Jiang-Su algebra. Moreover, crossed products by automorphisms with finite Rokhlin dimension preserve the property of having finite nuclear dimension, and under a mild additional hypothesis also preserve $\Zh$-stability. In topological dynamics our notion may be interpreted as a topological version of the classical Rokhlin lemma: automorphisms arising from minimal homeomorphisms of finite dimensional compact metrizable spaces always have finite Rokhlin dimension. The latter result has by now been generalized by Szab\'o to the case of free and aperiodic $\mathbb{Z}^{d}$-actions on compact metrizable and finite dimensional spaces. 
\end{abstract}
\maketitle

\section{Introduction}

\noindent
An action of a countable discrete group $G$ on a (possibly noncommutative) space $X$ has a Rokhlin property if there are systems of subsets of $X$, indexed by large subsets of $G$, 
\begin{enumerate}
\item[(i)] which are (approximately) pairwise disjoint,
\item[(ii)] which (approximately) cover $X$ and
\item[(iii)] on which the group action is (approximately) compatible with translation on the index set.
\end{enumerate}

In a sense, Rokhlin properties create a `reflection' of the acting group in the underlying space which often allows for a dynamic viewpoint on properties of the latter. The concrete interpretation of conditions (i), (ii) and (iii) above  allows for a certain amount of freedom, and may vary with the applications one has in mind. For a general group it is usually not so clear how to synchronize indexing by large subsets and compatibility of the group action with translation on these subsets. The situation is much less ambiguous if $G$ is finitely generated, in particular for $\mathbb{Z}$ (or $\mathbb{Z}^{d}$) or for finite groups.

In this paper we will be mostly interested in the noncommutative case, more specifically in actions on $C^{*}$-algebras. The study of such actions, and their associated
crossed products, has always been a central  theme in
operator algebras, beginning with Murray and von Neumann's group measure space construction which associates a von Neumann algebra to an action of a group on a measure space.
Since then the crossed product construction has been generalized to actions on noncommutative von Neumann and $C^*$-algebras, providing an inexhaustible source of highly nontrivial examples. It combines in an intricate way dynamical properties of the action and properties of the algebra of coefficients.

A basic, yet  important early result in the dynamics of group actions on measure spaces is the Rokhlin Lemma saying that a measure preserving aperiodic (i.e.\ almost everywhere non-periodic) action of $\Z$ can be approximated by cyclic shifts, in the sense that there exists a finite partition of the space (Rokhlin tower) which is almost cyclically permuted. It can be reformulated as a result about strongly outer automorphisms of commutative von Neumann algebras involving partitions of unity by orthogonal projections (towers of projections) and this formulation has led to the various versions of  $C^*$-algebraic Rokhlin properties. (We recall the version which is now most commonly used in Definition~\ref{rokhlin-original}.)

As it turns out there are many important examples of actions of finite groups and $\Z$ possessing  Rokhlin properties.  See, for instance, \cite{Ks1,Iz0} and references therein for actions of $\Z,$ and \cite{HJ2, Iz, Iz2,OP}, for the finite group case. In fact, Rokhlin properties, especially for the single automorphism case, are quite prevalent, and indeed  as a byproduct we establish in this paper that they are generic for automorphisms of unital $C^*$-algebras which absorb a UHF algebra of infinite type. For related work establishing genericity of Rokhlin type conditions, see \cite{Phi:tracial-generic}.

Most currently used  $C^*$-algebraic Rokhlin properties involve towers consisting of projections. However, unlike in the case of von Neumann algebras, requiring the existence of projections poses severe restrictions on the coefficient algebra. In particular, by lack of projections, automorphisms of the Jiang-Su algebra $\mathcal{Z}$ (the smallest possible $C^{*}$-algebraic analogue of the hyperfinite $II_{1}$ factor $\mathcal{R}$) or automorphisms arising from homeomorphisms of connected spaces will not satisfy most of the current definitions of the Rokhlin property.  Moreover, even if there are sufficiently many projections (e.g.\ in the real rank zero case)  Rokhlin properties do not always hold. In the $C^*$-setting they become  regularity properties of  actions which may be regarded as a strong form of outerness (producing mostly simple $C^*$-algebras).

The main purpose of this paper is to develop  generalizations of  Rokhlin properties motivated by the idea of covering dimension. Roughly speaking, instead of requiring towers consisting of projections we allow for partitions of unity involving several towers of positive elements, where elements from different towers are no longer required to be orthogonal, but the number of different towers is restricted. (A further requirement, which we need for some results and which is automatic in many important cases, is that these towers  commute.) This is analogous to the definition of covering dimension using partitions of unity with functions allowing for controlled overlaps. We like to think of the number of towers as the coloring number, or dimension, of the cover  of the dynamical system, the usual Rokhlin property corresponding to the zero dimensional case. It turns out that this generalization provides a much more flexible concept, which applies to a much wider range of examples than the currently used Rokhlin properties. 
We can even show that in the $\Zh$-stable case finite Rokhlin dimension is topologically generic.  
A very nice and explicit example for which we can show finiteness of our Rokhlin dimension and where the usual Rohklin property clearly fails are irrational rotation automorphisms of $C(\T)$. In the initial version of this paper we showed that that automorphisms corresponding to minimal homeomorphisms of compact spaces of finite covering dimension always have finite Rokhlin dimension. 
This result might be regarded as a topological version of the classical Rokhlin Lemma. After this result appeared on the arXiv, Szab\'o in \cite{Sza:dimnuc} generalized it to the case of free and aperiodic $\mathbb{Z}^{d}$-actions. Szab\'o's argument is based on results by Gutman and by Lindenstrauss; it is technically easier than our original approach, which we do not include in the present version (it is still available in arXiv:1209.1618v1).

Covering dimension for topological spaces was generalized to the context of $C^*$-algebras in \cite{kirchberg-winter} and \cite{winter-zach}, in which the related concepts of decomposition rank and nuclear dimension were introduced. Those are based on placing a uniform bound on the decomposability of c.p.\ approximations of the identity map (cf.\ \cite{HKW}). Simple $C^*$-algebras with finite decomposition rank or finite nuclear dimension have been shown to have significant regularity properties, in particular they are $\mathcal{Z}$-stable (\cite{winter-dr-Z,winter-nd-Z}).
Whilst these classes have good permanence properties, in particular those with finite nuclear dimension, there are still no general results known involving crossed products. Our original motivation was to find bounds on the decomposition rank and/or nuclear dimension of crossed products, which is indeed possible for Rokhlin actions of finite groups. Moreover it turns out that similar bounds can be established for automorphisms satisfying higher dimensional Rokhlin properties, at least 
for the nuclear dimension. 
Combining these results with the finiteness of Rokhlin dimension for minimal systems on compact spaces of finite covering dimension we obtain finiteness of the nuclear dimension of crossed products of such systems. This provides a different proof of the nuclear dimension result in \cite{TomsWinter:minhom}. Moreover, the generalization obtained by Szab\'o (see \cite{Sza:dimnuc}) together with the results of \cite{Lin:crossed-product-AF-embedding} have now led to the classification of crossed products associated to Cantor free minimal $\mathbb{Z}^{d}$-actions with compact space of ergodic measures, see \cite{Win:class-cross} (also see \cite{MatSat:dr-UHF} for the uniquely ergodic case). In fact, the respective result holds for actions on compact metrizable and finite dimensional spaces, provided that traces are separated by projections.

In this paper we focus our attention on unital $C^*$-algebras. The non-unital case, as well as further properties of finite Rokhlin dimension for finite group actions, will be explored in a forthcoming paper of the first named author and N.C.~ Phillips, \cite{hirshberg-phillips}. 

We note that other generalizations of the Rokhlin property have been considered  which are based on the idea of leaving out a remainder which is small in a tracial sense (see \cite{OP-tracial,Sato,Archey,hirshberg-orovitz,Phi:tracialRokhlin}).

We also wish to point out that finite Rokhlin dimension can be interpreted as a dynamic version of Kirchberg's covering number, introduced in \cite{Kir:CentralSequences} (also cf.\ \cite{KirRor:pi}).

Hiroki Matui and Yasuhiko Sato have informed us that they have a proof that \emph{all} strongly outer automorphisms of separable, simple, unital, monotracial and $\mathcal{Z}$-stable $C^{*}$-algebras have Rokhlin dimension at most 1, thus partially generalizing our Theorem~\ref{Z-stable-generic}. Their argument is based on the techniques developed in \cite{MatSat:Z}.  

As a further variation of our Rokhlin dimension one might in addition ask the Rokhlin systems to approximate the underlying algebra; one might also compare the necessary number of colors with the possible lengths of the Rokhlin towers  of such approximations (instead of asking for a global bound on the number of colors). This idea leads to a version of `slow dimension growth' which in turn is closely related to the notion of mean dimension introduced by Lindenstrauss and Weiss in \cite{LinWei:meandimension}.

Finally, we mention that Bartels, L\"uck and Reich have studied a concept closely related to ours in \cite{BLR:equivariant}; this idea plays a key role in their proof of the Farrell-Jones conjecture for hyperbolic groups in \cite{BLR:Farrell-Jones}. 

These connections will be studied in subsequent work.

The organization of the paper is as follows.  We first consider the case of finite group actions in Section~\ref{section:finite-groups}, which in some respects is simpler than the case of actions of $\Z$ and serves as a  good motivation displaying some essential ideas. In Section~\ref{section:single-auto-basic-properties} we introduce various versions of Rokhlin dimension for $\mathbb{Z}$-actions and study their interplay.  In Section~\ref{section:genericity} we show that finite Rohklin dimension is generic in the $\Zh$-stable case. A similar argument shows that the Rokhlin property is generic in the UHF-stable case. In Section~\ref{section:permanence-nd} we show that finite nuclear dimension is preserved under forming crossed products by automorphisms with finite Rokhlin dimension, and in Section~\ref{section:permanence-Z} we show that $\Zh$-stability is preserved if we assume a further commutativity condition in the definition of Rokhlin dimension (which holds in all cases we are aware of); the latter result holds for actions of finite groups and of $\mathbb{Z}$.

In Section~\ref{section-irrational} we consider minimal $\mathbb{Z}$-actions on finite dimensional compact Hausdorff spaces, which always turn out to have finite Rokhlin dimension. We give a direct argument of this fact for irrational rotations and compare it to Szab\'o's generalization to free and aperiodic $\mathbb{Z}^{d}$-actions.

%

In an appendix we recall the crossed product construction as well as the notions of nuclear dimension and of $\mathcal{Z}$-stability for the reader's convenience.

We would like to thank the referee for carefully reading the manuscript and for a number of helpful comments.

\tableofcontents

\newpage

\section{Rokhlin dimension for actions of finite groups}
\label{section:finite-groups}

\noindent
In this section we define the concept of Rokhlin dimension for finite group actions on $C^{*}$-algebras and show how this notion can be used to prove  permanence of finite nuclear dimension under taking crossed products.


\begin{Def}
\label{def: positive Rokhlin finite groups}
Let $G$ be a finite group, $\A$ a unital $C^*$-algebra and 
\[
\alpha : G \to \textup{Aut} (\A)
\]
an action of $G$ on $\A$.
We say that $\alpha$ has Rokhlin dimension $d$ if $d$ is the least integer such that the following holds: for any 
$\eps >0$ and every finite subset $\F \subset \A$ there are positive contractions 
\[
\left( f^{(l)}_g \right)_{l=0,\ldots,d;\; g \in G} \subset \A
\]
satisfying
\begin{enumerate}
\item for any $l$, $\left\|f_{g}^{(l)}f_{h}^{(l)}\right\|< \eps$, whenever $g \neq h$ in $G$,
\item $\left\|  \sum_{l=0}^{d}\sum_{g \in G}  f_{g}^{(l)} - 1 \right\|<\eps$,
\item $\left\|\alpha_h \Big(f_{g}^{(l)}\Big) - f_{hg}^{(l)} \right\| <\eps$ for all $l \in \{ 0 ,\ldots, d\}$ and 
$g \in G$,
\item $\left\|\big[f_{g}^{(l)},a \big]\right\| < \eps$ for all $l \in \{ 0 ,\ldots, d\}$, $g\in G$ and $a \in \F$.
\end{enumerate} 
\end{Def}
We will refer to the family $( f^{(l)}_g )_{l=0,\ldots,d; \; g \in G}$ as a multiple tower  and to 
the $d+1$ families $( f^{(0)}_g )_{g \in G}, \ldots , ( f^{(d)}_g )_{g \in G}$ as towers of color $0,\ldots,d$, respectively. 
Notice that if $d=0$ then we obtain the usual Rokhlin property for actions of finite groups (see \cite{Iz,Iz2}).

A possible variant of the above definition would be to weaken (2) to 
\begin{enumerate}
\item[(2')] $f:= \sum_{l=0}^{d}\sum_{g \in G}  f_{g}^{(l)} \geq (1- \eps) \cdot 1_{\mathcal{A}}$.
\end{enumerate}
If the towers can be chosen to commute or approximately commute it's not hard to see that if (2') is true then, upon replacing $ f_{g}^{(l)}$ by 
$f^{-1/2} f_{g}^{(l)}f^{-1/2}$ and $\eps$ by a sufficiently small fraction of $\eps$, we can also obtain (2), so that (2) and (2') are equivalent in this case.

\begin{Rmk} \label{square root and compact}
By approximating the function $t \mapsto t^{1/2}$ by polynomials on $[0,1]$ we may  assume in  Definition 
\ref{def: positive Rokhlin finite groups} above that for $h,g \in G$ and $a \in \F$ we have 
$$\left\| {f_{g}^{(l)}}^{1/2} {f_{h}^{(l)}}^{1/2}\right\| < \eps \textup{ if $g \neq h $, } 
\left\|\Big[{f_{g}^{(l)}}^{1/2},a \Big]\right\| < \eps \textup{ and } \left\|\alpha_h \Big({f^{(l)}_g}^{1/2}\Big)-{f^{(l)}_{hg}}^{1/2}\right\| < \eps 
$$
for all $l \in \{ 0, \ldots ,d\}$.
Also, we may replace the finite set $\F$ in the definition by a norm compact set. 
\end{Rmk}

\begin{Thm}
\label{Thm: positive Rokhlin finite groups}
Let $G$ be a finite group, $\A$ a unital $C^*$-algebra with finite decomposition rank and $\alpha:G\to \textup{Aut}(\A)$
be an action  with Rokhlin dimension $d$. 

Then the crossed product $\A \rtimes_{\alpha} G$ has finite 
decomposition rank as well - in fact, $$\dr(\A \rtimes_{\alpha} G) \leq (\dr(\A) +1) (d+1)-1.$$ 
The respective statement is true 
for  nuclear dimension.
\end{Thm}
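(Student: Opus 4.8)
The plan is to construct, for a given $\eps>0$ and finite subset $\mathcal{S}\subset\A\rtimes_\alpha G$, a completely positive approximation of $\id_{\A\rtimes_\alpha G}$ factoring through a finite-dimensional algebra with exactly $(d+1)(\dr(\A)+1)$ order-zero summands. First I would reduce to the case $\mathcal{S}=\{a u_k : a\in\mathcal{F}_0,\ k\in G\}$ for a finite set $\mathcal{F}_0\subset\A$ and the canonical unitaries $u_k$ implementing $\alpha$. Setting $n=\dr(\A)$, fix a decomposition-rank approximation $\A\xrightarrow{\psi}F=\bigoplus_{i=0}^{n}F_i\xrightarrow{\phi}\A$ for the $G$-invariant finite set $\{\alpha_m(a):a\in\mathcal{F}_0,\ m\in G\}$, with $\psi$ c.p.c.\ and each $\phi|_{F_i}$ c.p.c.\ order zero, and with $\phi\psi$ approximately the identity there. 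The target model is $\bigoplus_{l=0}^{d}\big(F\otimes M_{|G|}\big)$, whose order-zero summands $F_i\otimes M_{|G|}$ number exactly $(d+1)(n+1)$, which already accounts for the stated bound $(\dr(\A)+1)(d+1)-1$.

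For the maps, I would let $\lambda:\A\rtimes_\alpha G\to M_{|G|}(\A)$ be the regular embedding and define a single (color-independent) encoding map $\Psi=(\psi\otimes\id_{M_{|G|}})\circ\lambda$, which is honestly c.p.c. For the decoding maps I would use the Rokhlin towers: choosing the square roots as in Remark~\ref{square root and compact}, set, for each color $l$,
\[
\Phi^{(l)}(b\otimes e_{g,h})=\big(f^{(l)}_g\big)^{1/2}\,u_g\,\phi(b)\,u_h^{*}\,\big(f^{(l)}_h\big)^{1/2},\qquad b\in F,\ g,h\in G.
\]
Each $\Phi^{(l)}$ is completely positive because it is the composition of the c.p.c.\ order-zero map $\phi\otimes\id_{M_{|G|}}$ with the compression $y\mapsto W\iota(y)W^{*}$, where $\iota:M_{|G|}(\A)\hookrightarrow M_{|G|}(\A\rtimes_\alpha G)$ is the inclusion and $W=\big(\,(f^{(l)}_g)^{1/2}u_g\,\big)_{g\in G}$ is a fixed row over $\A\rtimes_\alpha G$. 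Restricting to a fixed summand $F_i\otimes M_{|G|}$ then gives the required order-zero pieces.

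The verifications would proceed in three steps. (a) Reconstruction: a direct computation of $\sum_{l}\Phi^{(l)}\Psi(au_k)$, using $\phi\psi\approx\id$ on the $G$-invariant set, the equivariance $\alpha_k(f^{(l)}_{k^{-1}g})\approx f^{(l)}_g$, and the commutation of the tower elements with the range of $\phi$ (here the norm-compact version of Remark~\ref{square root and compact} is used), telescopes to $a\big(\sum_{g}f^{(l)}_g\big)u_k$; summing over $l$ and invoking the covering condition $\sum_{l,g}f^{(l)}_g\approx 1$ yields $\sum_l\Phi^{(l)}\Psi(au_k)\approx au_k$. (b) Order zero: for orthogonal elementary tensors in $F_i\otimes M_{|G|}$, the product $\Phi^{(l)}(\,\cdot\,)\Phi^{(l)}(\,\cdot\,)$ is approximately zero, using within-tower orthogonality $(f^{(l)}_g)^{1/2}(f^{(l)}_h)^{1/2}\approx 0$ for $g\neq h$ together with the exact orthogonality preservation of $\phi|_{F_i}$. (c) Global contractivity (needed only for decomposition rank): since $\Phi=\sum_l\Phi^{(l)}$ is c.p.\ with $\Phi(1)\approx\sum_{l,g}f^{(l)}_g\approx 1$, it is approximately unital, hence approximately contractive; after a small rescaling this gives an honest c.p.c.\ map.

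I expect the main obstacle to be that the decoding maps are only \emph{approximately} order zero and approximately contractive, whereas the definition of decomposition rank requires exact c.p.c.\ order-zero pieces. This I would resolve by a perturbation argument: approximate order-zero maps out of $F_i\otimes M_{|G|}$ lie close to honest c.p.c.\ order-zero maps (stability of order-zero maps, after \cite{winter-zach}), at the cost of enlarging the reconstruction error by a controlled amount; equivalently one can carry out the construction in a central sequence algebra where the Rokhlin relations hold exactly. A secondary point requiring care is bookkeeping the $\alpha$-twists introduced by $\lambda$ against the equivariance of the towers so that the reconstruction in (a) genuinely telescopes. For nuclear dimension the construction is identical, except that step (c) is unnecessary: one only needs each $\Phi^{(l)}|_{F_i\otimes M_{|G|}}$ to be c.p.c.\ order zero, so the same model yields the bound $\nd(\A\rtimes_\alpha G)\leq(\nd(\A)+1)(d+1)-1$.
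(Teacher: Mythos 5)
Your proposal is correct and follows essentially the same route as the paper's proof: you embed the crossed product into $M_{|G|}(\mathcal{A})$ via the regular representation, push a decomposition-rank approximation through the Rokhlin-weighted compressions $\Phi^{(l)}(b\otimes e_{g,h})=\big(f^{(l)}_g\big)^{1/2}u_g\phi(b)u_h^{*}\big(f^{(l)}_h\big)^{1/2}$ (the paper's maps $\rho^{(l)}\circ\varphi^{(j)}$), and then use stability of order-zero maps to replace the approximately order-zero composites by exact c.p.c.\ order-zero maps, normalizing at the end for contractivity. The only cosmetic difference is that you produce the approximation of $M_{|G|}(\mathcal{A})$ by tensoring an approximation of $\mathcal{A}$ (chosen on a $G$-invariant finite set) with $\mathrm{id}_{M_{|G|}}$, whereas the paper invokes $\mathrm{dr}(M_n(\mathcal{A}))=\mathrm{dr}(\mathcal{A})$ and approximates the given finite subset of the crossed product inside $M_n(\mathcal{A})$ directly.
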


\begin{proof}
We will only give a proof for the decomposition rank; the proof for  nuclear dimension is similar. 
Set $n = |G|$ and recall that $\dr(M_n(\A)) = \dr(\A)$ and denote this number by $N$.

Let $\F \subset \A \rtimes_{\alpha} G \subset M_n(\A)$ finite and $\eps>0$ be given; we may assume that $\F$ consists of contractions. Choose an $N$-decomposable 
c.p.c.\ approximation (for $M_{n}(\mathcal{A})$) consisting of  $\mathcal{F}  =\mathcal{F}  ^{(0)} \oplus \ldots \oplus \mathcal{F}  ^{(N)}$,
$\psi : M_n(\A) \to \mathcal{F}  $, $\varphi:\mathcal{F} \to M_n(\A)$,  
$\varphi = \varphi^{(0)} +  \ldots + \varphi^{(N)}$ for $\F$ to within $\eps/6$. 
 
We will construct c.p.\ approximations (for $ \A \rtimes_{\alpha} G$) as below, and then use stability of order zero maps to replace the maps $\rho \varphi $ by ones that are decomposable into order zero maps:
$$
\xymatrix{
\A\rtimes_{\alpha} G \ar[dr]^{\iota} &   & &  &  \A\rtimes_{\alpha} G \\ 
  &  M_n(\A)    \ar[dr]^{\bigoplus_{j=0}^d \psi} &  &  \bigoplus_{j=0}^{d} M_n(\A)   \ar[ur]^{\rho} &   \\
  &  & \bigoplus_{j=0}^{d} \mathcal{F}  \ar[ur]^{\bigoplus_{j=0}^d \varphi}  
}
$$

Let $K_0 = \bigcup_{g \in G} \bigcup_{j=0}^N (\textup{id}_{M_n} \otimes\alpha_g) (\varphi^{(j)}(B_\mathcal{F}  ))$, 
where $B_\mathcal{F}  $ is the closed unit ball in $\mathcal{F}  $, and let $K \subseteq \A$ be the collection of all matrix 
entries of elements in $K_0$. Notice that $K_0,K$ are compact subsets of $M_n(\A)$
and $\A$ respectively. Using stability of order zero maps, we choose $\eta >0$ such that $\eta < \eps/6$ 
and such that if $\theta:\mathcal{F}   \to \A \rtimes_{\alpha} G$ is a c.p.c.\:map which satisfies 
$\|\theta(a)\theta(b)\|\leq (n+4)n^3\eta$ for all positive orthogonal contractions $a,b \in \mathcal{F}  $, 
then there exists a c.p.c.\:order zero map $\theta':\mathcal{F}   \to  \A \rtimes_{\alpha} G$ such that 
$\|\theta' - \theta\|<\eps/(6(d+1)(N+1))$. Finally we also require $\eta$ to be such that 
$(2(d+1)n +1)n\eta < \eps /6$ which will be used later. 

Every element $x \in \F \subseteq \A \rtimes_{\alpha} G $ can be written  in the form 
$x= \sum_{g \in G} x(g) u_g$, where 
$x(g) \in \A$ are uniquely determined contractions. Let $\tilde{\F}$ be the finite set  
$\tilde{\F}= \{ x(g) \mid x \in \F, g \in G \} \subseteq \A$.

For the particular $\eta >0$ we have chosen and the compact set $K \cup \tilde{\F}$, choose a 
multiple tower $\left( f^{(l)}_g \right)_{l=0,\ldots,d; \: g \in G}$ such that (1)--(4) 
of Definition \ref{def: positive Rokhlin finite groups} hold true for $\eps$ replaced by 
$\eta$ and $a \in K \cup \tilde{\F}$ and also 
$\Big\| {f_{g}^{(l)}}^{1/2} {f_{h}^{(l)}}^{1/2}\Big\| < \eta$ ($g,h \in G$ different), 
$\Big\|\alpha_h({f^{(l)}_g}^{1/2})-{f^{(l)}_{hg}}^{1/2}\Big\|<\eta$ and 
$\Big\|[{f_{g}^{(l)}}^{1/2},a]\Big\| < \eta$ for all $l \in \{ 0, \ldots ,d\}$, 
$a \in K \cup \tilde{\F}$, $g , h \in G$.

Now, define $\rho^{(l)}:M_n(\A) \to \A \rtimes_{\alpha} G$ by
$$
\rho^{(l)}(e_{g,h} \otimes a) = {f^{(l)}_g}^{1/2} u_g a u_{h}^*{f^{(l)}_h}^{1/2}.
$$
We note that $\rho^{(l)}$ is c.p.\:since it is the composition of the canonical inclusion 
$M_n \otimes \A \hookrightarrow M_n \otimes \A \rtimes_{\alpha} G$ with the map 
$M_n( \A \rtimes_{\alpha} G) \to \A  \rtimes_{\alpha}  G$ given by $a \mapsto v_{l} av_{l}^*$, 
where $v_{l} \in M_{1,n}( \A \rtimes_{\alpha} G )$ is defined by $v_{l} = 
\left( {f^{(l)}_g}^{1/2} \;  u_g  \; \right)_{g \in G}$ (as a row vector whose 
entries are indexed by the elements of $G$). Thus the sum $\rho = \sum_{l =0}^d \rho^{(l)}$ 
is also c.p. For $a \in K \cup \tilde{\F}$, $g \in G$, $l \in \{0, \ldots ,d\}$ we have 
\begin{eqnarray*}
\rho^{(l)} (au_g) 
& = &
\rho^{(l)}\Big(\sum_{h \in G} e_{h,g^{-1}h} \otimes \alpha_{h^{-1}}(a)\Big) \\
& = & 
\sum_{h \in G} {f_h^{(l)}}^{1/2} u_h \alpha_{h^{-1}} (a) u_{h^{-1}g} {f_{g^{-1}h}^{(l)}}^{1/2} \\
& = & 
\sum_{h \in G} {f_h^{(l)}}^{1/2} a u_g {f_{g^{-1}h}^{(l)}}^{1/2} \\
& =_{n\eta} &
\sum_{h \in G} {f_h^{(l)}}^{1/2} a {f_{h}^{(l)}}^{1/2} u_g  \\
& =_{n\eta} &
\sum_{h \in G} {f_h^{(l)}} a  u_g. 
\end{eqnarray*}
Thus $\| \rho (a u_g) - au_g \| < (2(d+1)n+1)\eta$ by assumption (note that $\|a\| \leq 1$) and so 
$$
\| \rho (x) - x \| < (2(d+1)n+1)n \eta <  \eps/6 
$$ 
for $x \in \F$.

Note that $\rho(1)=\sum_{l =0}^d \rho^{(l)} (1) = \sum_{l=0}^{d}\sum_{g \in G}  f_{g}^{(l)}$, 
therefore  $\| \rho \| < 1+ \eps/6$ and so after possible normalization (dividing by its norm)  we have 
$\| \rho \| = 1$ and $\| \rho (x) - x \| < \eps/3$ for all $x \in \F$. 

Define  $f^{(l)}:= \sum_{g \in G} f^{(l)}_g$, where $l \in \{0, \ldots ,d\}$. One checks that for all $a,b \in K \cup \tilde{\F}$, 
and $g_1,g_2,h_1,h_2 \in G$, we have 
$$
\| \rho^{(l)}(a \otimes e_{g_1,h_1})\rho^{(l)}(b \otimes e_{g_2,h_2} ) - f^{(l)} \rho^{(l)} ((a \otimes e_{g_1,h_1}) \cdot (b \otimes e_{g_2,h_2}) )\| < (n+4)\eta . 
$$
Therefore, if $a,b$ are elements of $K_0 \subset M_n(\A)$, then 
$$
\|\rho^{(l)}(a)\rho^{(l)}(b) - f^{(l)} \rho^{(l)}(ab) \| \leq (n+4)n^3 \eta.
$$
Consequently, for any $j=0,\ldots,d$, we have that 
$$
\big\|\rho^{(l)}(\varphi^{(j)}(a)) \cdot \rho^{(l)}( \varphi^{(j)}(b)) - f^{(l)} \rho^{(l)}(\varphi^{(j)}(a)\varphi^{(j)}(b))\big\| < (n+4)n^3 \eta
$$ 
for all $a,b \in B_\mathcal{F}  $. In particular, if $a,b \in B_\mathcal{F}  $ are positive and orthogonal, then $\|\rho^{(l)}(\varphi^{(j)}(a))\rho^{(l)}(\varphi^{(j)}(b))\| \leq (n+4)n^3 \eta$, and $\rho^{(l)} \circ \varphi^{(j)}$ 
is a c.p.c.\  map. By the choice of $\eta$, there are c.p.c.\ order zero maps 
\[
\sigma^{(l,j)}:\mathcal{F}   \to   \A  \rtimes_{\alpha}  G
\]
such that 
\[
\|\sigma^{(l,j)} - \rho^{(l)} \circ \varphi^{(j)}\|<\eps/[6(d+1)(N+1)].
\] 
Set 
\[
\sigma^{(l)}:= \sum_{j=0}^{N}\sigma^{(l,j)} : \mathcal{F} \to  \A  \rtimes_{\alpha}  G
\]
and
$$\sigma = \sum_{l=0}^{d} \sigma^{(l)}: \bigoplus_{l=0}^{d} \mathcal{F} \to  \A  \rtimes_{\alpha}  G.$$ If $\|\sigma\|>1$, then we normalize $\sigma$; since $\|\sigma\|<1+\eps/6$, normalizing will again introduce an error of at most 
$\eps/6$.

Thus, possibly after normalizing, we have that $\|\sigma - \rho \circ \varphi \|< \eps/3$, and putting everything together, the triple $(\bigoplus_{l=0}^{d }\mathcal{F}  , \psi , \sigma)$ is a 
$((d+1)(N+1)-1)$-decomposable approximation for $\F$ within $\eps$, as required.
\end{proof}

\begin{Rmk}
We will see in Theorem~\ref{finite-action-Z-stable} that $\mathcal{Z}$-stability also passes to crossed products by finite group actions with finite Rokhlin dimension, provided one in addition assumes that the Rokhlin towers commute.
\end{Rmk}

\section{Rokhlin dimension for actions of $\Z$}
\label{section:single-auto-basic-properties}

\noindent
In this section we introduce the concept of Rokhlin dimension for $\Z$-actions on $C^{*}$-algebras. There is a certain amount of freedom in the definition, just as for the original Rokhlin property. For the latter, several versions have been studied (cf.\ \cite{Ks1}, \cite{Iz0} and the references therein). Correspondingly,  we introduce several versions  of Rokhlin dimension and compare these. As it turns out, in general one such version is finite if and only if the other is -- roughly, they  bound one another by a factor $2$.   Since we are mostly interested in when the Rokhlin dimension is finite (as opposed to the precise value), for our purposes it will not be too important which variant we choose.      The different versions of Rokhlin dimension are designed so that they are  just the zero dimensional instances  of  the various  Rokhlin properties; in this regard,  our concept  bridges the gaps between the latter.   For our exposition we will give slight preference to the version  generalizing  the definition below.

Let us first recall  the Rokhlin property which is now most commonly used.

\begin{Def}
\label{rokhlin-original}
Let $\A$ be a unital $C^*$-algebra, and let $\alpha:\Z \to \mathrm{Aut}(\mathcal{A})$ be an action of the integers. We say that $\alpha$ has the Rokhlin property if for any 
finite set $\F \subseteq \A$, any $\eps>0$ and any $p \in \N$ there are projections 
\[
e_{0,0},\ldots,e_{0,p-1},e_{1,0},\ldots,e_{1,p} \in \A
\]
such that 
\begin{enumerate}
\item $\sum_{r=0}^1 \sum_{j=0}^{p+r-1} e_{r,j} = 1$,
\item $\|\alpha_{1}(e_{r,j})-e_{r,j+1}\|<\eps$ for all $j=0,1,\ldots,p-2+r$,
\item $\|[e_{r,j},a]\|<\eps$ for all $r,j$ and all $a \in \F$.
\end{enumerate}
We shall refer to the projections in the definition as Rokhlin projections and to the set of those projections as 
Rokhlin towers. 
\end{Def}

\begin{Rmks}
\label{Rokhlin-rems}
(i) It follows immediately from the definition that the Rokhlin projections are pairwise orthogonal, 
and that $\|\alpha_{1}(e_{0,p-1} + e_{1,p}) - (e_{0,0} + e_{1,0})\|<\eps$. In the generalization we consider, we will need to add such 
assumptions explicitly. 

(ii) It could be that the $e_{1,j}$'s (or the $e_{0,j}$'s) are all zero. 
If this can always be arranged then we say that $\alpha$ has the single tower  Rokhlin property. The single tower Rokhlin property 
is stronger than the Rokhlin property - it implies immediately that the identity can be decomposed into a sum of projections which are 
equivalent via an automorphism, which amounts to a nontrivial divisibility requirement. For instance, there can be no automorphism on the 
UHF algebra of type $2^{\infty}$ which has the single tower Rokhlin property as stated here, whereas we shall 
see that the Rokhlin property is generic for automorphisms of this algebra (for this $C^*$-algebra, there are many automorphisms with 
single Rokhlin towers if one restricts to towers of a height which is a power of 2 -- in fact, basically the same proof shows that such automorphisms are generic). 
\end{Rmks}

We now turn to our main definition.

\begin{Def} \label{Rokhlin-dim-single-auto}
Let $\A$ be a unital $C^*$-algebra and  $\alpha:\Z \to \mathrm{Aut}(\mathcal{A})$ an action of the integers. 

a) We say $\alpha$ has Rokhlin dimension   
 $d$, 
 \[
 \dimrokh(\A, \alpha)=d,
 \]
if $d$ is the least natural number  such that the following holds: 
 
 For any finite subset $\F \subset \A$, 
any $p \in \N$ and any $\eps>0$, there are positive elements 
\[
f_{0,0}^{(l)},\ldots,f_{0,p-1}^{(l)}, f_{1,0}^{(l)},\ldots,f_{1,p}^{(l)}, \; l \in \{ 0,\ldots,d\},
\]
satisfying
\begin{enumerate}
\item for any $l \in \{ 0,\ldots,d\}$, $\big\|f_{q,i}^{(l)}f_{r,j}^{(l)}\big\|<\eps$, whenever $(q,i) \neq (r,j)$,
\item $ \left\| \sum_{l=0}^{d}\sum_{r=0}^1\sum_{j=0}^{p-1+r} f_{r,j}^{(l)} - 1 \right\|<\eps$,
\item $\big\|\alpha_{1}(f_{r,j}^{(l)}) - f_{r,j+1}^{(l)}\big\|<\eps$ for all $r \in \{0,1\}$, $j\in \{0,1,\ldots,p-2+r\}$ and all $l \in \{0,\ldots,d\}$,
\item  $\big\|\alpha_{1}(f_{0,p-1}^{(l)} + f_{1,p}^{(l)}) - (f_{0,0}^{(l)} + f_{1,0}^{(l)})\big\|<\eps$ for all $l$,
\item $\big\|[f_{r,j}^{(l)},a]\big\| < \eps$ for all $r,j,l$ and $a \in \F$.
\end{enumerate} 

If there is no such $d$ then we say that $\alpha$ has infinite Rokhlin dimension and write $\dimrokh (\A,\alpha) = \infty$. 

b) If, moreover, for all towers we can arrange $\|[f_{q,i}^{(l)},f_{r,j}^{(m)}]\|< \eps$ for all $q,i,l,r,j,m$, then 
we  say that $\alpha$ has  Rokhlin dimension $d$ with commuting towers, 
\[
\dimrokhct(\A,\alpha)=d.
\] 

c) If one can obtain this property such that for all $l$ one of the towers $(f_{0,j}^{(l)} )_j$ or $(f_{1,j}^{(l)})_j$ vanishes, then we shall say 
that $\alpha$ has Rokhlin dimension $d$ with single towers, 
\[
\dimrokhsingle(\A,\alpha)=d.
\] 

d) We write 
\[
\dimrokhbar (\mathcal{A},\alpha) = d
\]
if $\alpha$ satisfes the conditions above except that (2) is replaced by the  weaker condition
\begin{itemize}
\item[(2')] $\sum_{l=0}^{d}\sum_{r=0}^1\sum_{j=0}^{p-1+r} f_{r,j}^{(l)} \ge (1-\eps) \cdot 1_{\mathcal{A}}$.
\end{itemize}

e) We define $\dimrokhbarct (\mathcal{A},\alpha)$ and $\dimrokhbarsingle (\mathcal{A},\alpha)$ in the respective manner, replacing property (2) by (2') in b) and c).

We shall refer to each sequence  $(f_{r,j}^{(l)} )_{j\in \{0,\ldots,p-1+r\}}$ 
as a tower, to the length of the sequence as the height of the tower, and to the pair of towers $(f_{r,j}^{(l)} )_{j \in \{0,\ldots,p-1+r\}}$, $r \in \{0,1\}$, as a double tower. 
The superscript $l$ will sometimes be referred to as the color of the tower.
\end{Def}

\begin{Rmks}
\label{Rokhlin-rems2}
(i) Obviously $\dimrokh(\A,\alpha)$ and  $\dimrokhct(\A,\alpha)$ coincide if $\A$ is commutative and we will show below (Proposition~\ref{dominated-Rokhlin}) that in this case we also have 
$\dimrokhbar (\A,\alpha) = \dimrokh (\A,\alpha)$ and $\dimrokhbar^{\mathrm{s}} (\A,\alpha) = \dimrokh^{\mathrm{s}} (\A,\alpha)$.

(ii) Note that in the single tower version defined in c) above, property (4) of \ref{Rokhlin-dim-single-auto}a) forces property (3) to hold cyclically, i.e., for $j \in \{0, 1, \ldots , p-1+r\}$.

(iii) It is clear that in general $\dimrokh(\A,\alpha) \leq \dimrokhct(\A,\alpha)$. However in all cases in which we establish that a $\mathbb{Z}$-action has Rokhlin 
dimension $d$ the towers are commuting, 
and we do not know if there are any examples for which there is a strict inequality. The additional assumption concerning 
commuting towers is needed as a hypothesis for some results 
but not others.

(iv) Similarly, $\dimrokhbar (\A,\alpha) \leq  \dimrokh(\A,\alpha)$ in general and it is straightforward to verify that $\dimrokh(\A,\alpha) = 0$ 
if and only if $\alpha$ has the Rokhlin property (if and only if $\dimrokhct(\A,\alpha) = 0$).

(v) As is the case for the regular Rokhlin property, one could have defined a notion of a Rokhlin dimension using more than two towers for each color. That is, require 
that for any given $N$ one could find numbers $R_N$, $p_0,\ldots,p_{R_N}$ such that $p_r>N$ and positive elements $f_{r,j}^{(l)}$, $l = 0,1,\ldots,d$, $r = 0,1,\ldots,R_N$, 
for all $r$ and $j = 0,1,\ldots,p_r$ such that the analogous definition holds (of course, if one allows for arbitrary tower lengths and numbers of towers, then $\eps$ in conditions (1)--(5) has to be replaced by something like $\eps/(p_{0}+\ldots+p_{R_{N}})$). However, this can be reduced to the case of two towers in a standard way, which we 
quickly outline. For a given $p$, we find an $N$ such that any $n>N$ can be written as $a(p-1) + bp$ for some given non-negative integers $a,b$. Using this $N$, 
one finds $f_{r,j}^{(l)}$ as above. We fix
$a_r,b_r$ such that $a_r(p-1) + b_rp = p_r+1$, and then define $$\tilde{f}_{0,j}^{(l)} = \sum_{r=0}^{R_N} \sum_{m=0}^{a_r-1}f_{r,m(p-1)+j}^{(l)} $$
$$\tilde{f}_{1,j}^{(l)} = \sum_{r=0}^{R_N} \sum_{m=0}^{b_r-1}f_{r,a_r(p-1) + mp+j}^{(l)}$$
and this reduces the case of an arbitrary number of towers of each color, each in an arbitrary length of at least $N$ to our notion of Rokhlin dimension.

(vi) The definition of Rokhlin dimension (and Rokhlin dimension with commuting towers) is equivalent to the following apriori weaker formulation: instead of asking for double towers of arbitrary height $p$, one can  require that there are such towers for arbitrarily large $p$ (independently of $\eps$) but not necessarily for any $p$. This can be shown by using a rearrangement of tower terms as explained above, and is useful in certain examples  (e.g.\ if it is easier to produce towers which are of prime height, or of height which is a power of 2). We note, however, that in the single tower case this indeed gives a different formulation. For example (cf.\ Remark~\ref{Rokhlin-rems}(ii)), the infinite tensor shift on the CAR algebra has the Rokhlin property and one can obtain single towers of height $2^n$, but if one requires heights which are not powers of $2$ then one needs two towers (this follows from $K$-theoretic considerations).  
\end{Rmks}

\begin{Not}
We like to think of finite Rokhlin dimension as a property of a group action rather than of an automorphism, even if the group is cyclic. However, when it is understood that the group is $\mathbb{Z}$ we will sometimes slightly abuse notation and  write  $\dimrokh(\A,\alpha)$ even when $\alpha \in \mathrm{Aut}(\A)$  denotes the automorphism inducing the $\mathbb{Z}$-action rather than the action itself. In this situation we also say that the automorphism has finite Rokhlin dimension.
\end{Not}

There is a useful and straightforward reformulation of the definition of Rokhlin dimension which will be used later on:

\begin{Prop}
\label{dimrok-def}
Let $\mathcal{A}$ be a unital $C^{*}$-algebra and $\alpha: \Z \to  \mathrm{Aut} (\mathcal{A})$ a $\Z$-action. 

{\rm a)} Then $\dimrokh (\mathcal{A},\alpha) \le d$ if and only if the following holds:

For every $p \in \mathbb{N}$,  $\F \subset \mathcal{A}$ finite and $\eps >0$  there are c.p.c.\ maps
\[
\zeta^{(l)}: \mathbb{C}^{p} \oplus \mathbb{C}^{p+1} \to \mathcal{A}, \; l \in \{0,\ldots,d\},
\]
such that
\begin{enumerate}
\item $\| \zeta^{(l)}(e) \zeta^{(l)}(e')\| \le \eps \|e\| \|e'\|$ whenever $e,e' \in \mathbb{C}^{p} \oplus \mathbb{C}^{p+1}$ are orthogonal positive elements,
\item $ \| \sum_{l} \zeta^{(l)}(1_{\mathbb{C}^{p}} \oplus 1_{\mathbb{C}^{p+1}}) - 1_{\mathcal{A}} \| \le \eps $,
\item $\|\alpha_{1}(\zeta^{(l)} (e)) - \zeta^{(l)}(\tilde{\sigma}(e)) \| \le \eps \|e\|$ for all elements  $0 \le e \in  \mathbb{C}^{p} \oplus \mathbb{C}^{p+1} $ satisfying $e \perp (e_{p},f_{p+1})$, 
where $\tilde{\sigma} \in \mathrm{Aut} (\mathbb{C}^{p} \oplus \mathbb{C}^{p+1} )$ is the cyclic shift on each summand and $e_{j},f_{j}$ denote the canonical generators of $\mathbb{C}^{p}$ and $\mathbb{C}^{p+1}$, respectively,
\item  $\|\alpha_{1}(\zeta^{(l)} ((e_{p},f_{p+1}))) - \zeta^{(l)}((e_{1},f_{1})) \| \le \eps$,
\item $\| [\zeta^{(l)}(e),a ]\| \le \eps \|e\|$ for all $0 \le e \in \mathbb{C}^{p} \oplus \mathbb{C}^{p+1} $ and $a \in \F$.
\end{enumerate}

{\rm b)} We have $\dimrokhsingle (\mathcal{A},\alpha) \le d$ if for every $p$, $\F$ and $\eps$ as above there are $\zeta^{(0)},\ldots,\zeta^{(d)}$ as above such that each  $\zeta^{(l)}$ vanishes on one of the summands $\mathbb{C}^{p}$ or $\mathbb{C}^{p+1}$.

{\rm c)} We have $\dimrokhct (\mathcal{A},\alpha) \le d$ if for every $p$, $\F$ and $\eps$ as above there are $\zeta^{(0)},\ldots,\zeta^{(d)}$ as above with $\| [\zeta^{(l)} (e),\zeta^{(l')}(e')]\|   \le \eps \|e\| \|e'\|  $   for all $l,l' \in \{0,\ldots,d\}$ and $e,e' \in \mathbb{C}^{p} \oplus \mathbb{C}^{p+1}$.

{\rm d)} We have $\dimrokhbar (\mathcal{A},\alpha) \le d$ if for every $p$, $\F$ and $\eps$ as  in {\rm a)} there are $\zeta^{(0)},\ldots,\zeta^{(d)}$ as  in {\rm a)} satisfying
\begin{itemize}
\item[(2')] $\sum_{l} \zeta^{(l)}(1_{\mathbb{C}^{p}} \oplus 1_{\mathbb{C}^{p+1}}) \ge (1-\eps) \cdot 1_{\mathcal{A}}$
\end{itemize}
instead of {\rm (2)}. 

A combination of {\rm b)}, {\rm c)} and {\rm d)} yields the respective statements  for $\dimrokhbarsingle (\mathcal{A},\alpha)$ and $\dimrokhbarct (\mathcal{A},\alpha)$.
\end{Prop}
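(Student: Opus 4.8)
The plan is to prove Proposition~\ref{dimrok-def} by passing back and forth between the families of positive elements appearing in Definition~\ref{Rokhlin-dim-single-auto} and the completely positive contractive maps $\zeta^{(l)}\colon \mathbb{C}^p \oplus \mathbb{C}^{p+1} \to \mathcal{A}$. The essential observation is that a c.p.c.\ map out of the finite-dimensional commutative $C^*$-algebra $\mathbb{C}^p \oplus \mathbb{C}^{p+1}$ is determined by where it sends the $p + (p+1)$ minimal projections (the canonical generators $e_1,\ldots,e_p$ and $f_1,\ldots,f_{p+1}$), and these images are automatically positive; conversely, any assignment of the generators to positive elements of $\mathcal{A}$ extends to a c.p.\ map, which is contractive precisely when the sum of the images is a contraction. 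I would therefore set up the dictionary
\[
\zeta^{(l)}(e_{j}) = f^{(l)}_{0,j-1}, \qquad \zeta^{(l)}(f_{j}) = f^{(l)}_{1,j-1},
\]
and check that each condition of Definition~\ref{Rokhlin-dim-single-auto} translates into the corresponding condition of the Proposition, and vice versa.

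First I would prove part a). For the forward direction, given towers $(f^{(l)}_{r,j})$ satisfying (1)--(5), I define the $\zeta^{(l)}$ by the dictionary above; one must verify these are genuinely c.p.c., which follows from the fact that $\mathbb{C}^p \oplus \mathbb{C}^{p+1}$ is commutative and finite-dimensional, together with condition (2) forcing $\sum_l \zeta^{(l)}(1)$ to be near a contraction (one may need a harmless rescaling to ensure contractivity on the nose, absorbed into $\eps$). Conditions (1) and (5) of Definition~\ref{Rokhlin-dim-single-auto} give (1) and (5) of the Proposition once one notes that a general orthogonal pair of positive elements in $\mathbb{C}^p \oplus \mathbb{C}^{p+1}$ is supported on disjoint sets of generators, so the estimate reduces to the pairwise estimate on the generators by a standard expansion with the $\|e\|\,\|e'\|$ factors accounting for the coefficients. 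Condition (3) restricted to generators $e$ with $e \perp (e_p, f_{p+1})$ is exactly the non-cyclic shift compatibility (3) of the definition, while (4) of the definition yields (4) of the Proposition. The converse direction simply reverses the dictionary, reading off the towers as the images of the generators.

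Parts b), c) and d) then follow with minimal extra work, since each imposes a single additional clause that transparently matches: b) the vanishing of $\zeta^{(l)}$ on one summand corresponds to one of the two towers of color $l$ being zero; c) the commutator estimate on $\zeta^{(l)}(e), \zeta^{(l')}(e')$ reduces, by expanding $e,e'$ in the generator basis, to the commuting-towers condition $\|[f^{(l)}_{q,i}, f^{(m)}_{r,j}]\| < \eps$; and d) merely substitutes (2') for (2) on both sides under the same dictionary. The combined statement for $\dimrokhbarsingle$ and $\dimrokhbarct$ is obtained by overlaying the relevant clauses.

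The main obstacle I anticipate is purely bookkeeping rather than conceptual: one must track how the single $\eps$ in the Proposition relates to the $\eps$ in the definition after expanding arbitrary positive $e, e' \in \mathbb{C}^p \oplus \mathbb{C}^{p+1}$ into their $2p+1$ generator components, since the naive triangle-inequality bound on, say, $\|\zeta^{(l)}(e)\zeta^{(l)}(e')\|$ picks up a factor growing with $p$. The clean way around this is to exploit that for orthogonal positive $e, e'$ the supports are disjoint, so in the relevant cross-terms one only compares generators $e_j$ against $e_{j'}$ with $j \neq j'$ (and similarly on the $\mathbb{C}^{p+1}$ side), and to absorb the remaining combinatorial constants by running the definition with $\eps$ replaced by a suitable $\eps/C(p)$ — legitimate because $p$ is fixed before $\eps$ is chosen. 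Handling this rescaling carefully, and confirming that the c.p.c.\ normalization does not disturb the shift estimates (3) and (4), is where the only real care is needed; everything else is a direct translation.
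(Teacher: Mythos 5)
Your proposal is correct and is essentially the paper's own (unwritten) argument: the paper states this proposition without proof as a ``straightforward reformulation'' of Definition~\ref{Rokhlin-dim-single-auto}, and your generator-by-generator dictionary $\zeta^{(l)}(e_{j+1})=f^{(l)}_{0,j}$, $\zeta^{(l)}(f_{j+1})=f^{(l)}_{1,j}$, with the $p$-dependent combinatorial constants absorbed by shrinking $\eps$ (legitimate since $p$ is quantified before $\eps$) and a harmless normalization to restore contractivity, is exactly that intended translation. One small remark: parts b)--d) are stated in the paper only as one-directional implications (maps $\Rightarrow$ dimension bound), so your converse translations there are not required; this matters only in d), where condition (2') gives no upper bound on the towers, so the tower-to-map direction would need the towers to be contractions.
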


When the underlying $C^{*}$-algebra is commutative, then clearly each version of Rokhlin dimension agrees with its commuting tower counterpart -- but in this case, we even have that $\dimrokh$ agrees with $\dimrokhbar$:

\begin{Prop}
\label{dominated-Rokhlin}
Let $(T,h)$ be a dynamical system with $T$ compact and metrizable; let $\alpha$ be the induced action on $C(T)$. Then,
\begin{equation}
\label{dominated-Rokhlin-1}
\dimrokhbar (C(T),\alpha) = \dimrokh(C(T),\alpha)
\end{equation}
and
\[
\dimrokhbarsingle (C(T),\alpha) = \dimrokhsingle (C(T),\alpha).
\]
\end{Prop}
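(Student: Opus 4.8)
The plan is to deduce both stated equalities from the single implication
\[
\dimrokhbar(C(T),\alpha) \le d \ \Longrightarrow\ \dimrokh(C(T),\alpha) \le d
\]
(and its single-tower analogue), valid for every $d$. The reverse implications hold for arbitrary $C^{*}$-algebras by Remark~\ref{Rokhlin-rems2}(iv), so combining the two directions shows that the least admissible $d$ is the same for the barred and unbarred versions, hence they agree (including the value $\infty$). Thus the whole content lies in the forward implication, which I would establish by a renormalization argument exploiting commutativity of $C(T)$.

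So suppose $\dimrokhbar(C(T),\alpha)\le d$ and fix $p\in\N$, a finite $\F\subset C(T)$ and $\eps'>0$; I must produce towers satisfying (1)--(5) of Definition~\ref{Rokhlin-dim-single-auto}a) to within $\eps'$. First I would invoke the hypothesis for the \emph{same} $p$ and $\F$ but a much smaller tolerance $\eps$ (to be pinned down at the very end) to obtain positive contractions $f_{r,j}^{(l)}$ satisfying (1), (3), (4), (5) and the weak covering estimate $f:=\sum_{l,r,j}f_{r,j}^{(l)}\ge(1-\eps)\cdot 1$. Since $\eps<1$, the element $f$ is invertible with $\|f^{-1}\|\le(1-\eps)^{-1}$, and I would set
\[
g_{r,j}^{(l)}:=f^{-1/2}f_{r,j}^{(l)}f^{-1/2}=f^{-1}f_{r,j}^{(l)},
\]
the last equality holding because $C(T)$ is commutative. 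These are again positive contractions, and by construction $\sum_{l,r,j}g_{r,j}^{(l)}=1$ exactly, so condition (2) now holds with no error.

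The remaining conditions I would check by direct estimates, using commutativity throughout. Condition (5) is automatic in $C(T)$. For (1), $g_{q,i}^{(l)}g_{r,j}^{(l)}=f^{-2}f_{q,i}^{(l)}f_{r,j}^{(l)}$ gives $\|g_{q,i}^{(l)}g_{r,j}^{(l)}\|\le(1-\eps)^{-2}\eps$. The crux is (3) and (4), for which I first need approximate $\alpha_1$-invariance of $f$: telescoping each color-$l$ tower via (3) and closing it up cyclically via (4) accounts for exactly $2p$ error terms, yielding $\|\alpha_1(f^{(l)})-f^{(l)}\|<2p\,\eps$ for $f^{(l)}=\sum_{r,j}f_{r,j}^{(l)}$, and hence $\|\alpha_1(f)-f\|<2p(d+1)\eps$. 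As $\alpha_1(f)\ge(1-\eps)\cdot 1$ too, this gives $\|\alpha_1(f)^{-1}-f^{-1}\|\le(1-\eps)^{-2}\|\alpha_1(f)-f\|$. Writing $\alpha_1(g_{r,j}^{(l)})=\alpha_1(f)^{-1}\alpha_1(f_{r,j}^{(l)})$ and comparing with $g_{r,j+1}^{(l)}=f^{-1}f_{r,j+1}^{(l)}$, the triangle inequality together with (3) for the original tower bounds the error by a constant multiple (depending only on $p$ and $d$) of $\eps$; condition (4) is handled identically.

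Finally I would choose $\eps$ small enough, as a function of $p$, $d$ and $\eps'$, that each of the above bounds falls below $\eps'$. The single-tower statement $\dimrokhbarsingle(C(T),\alpha)=\dimrokhsingle(C(T),\alpha)$ follows verbatim, since if one of the two towers of a given color vanishes for the $f_{r,j}^{(l)}$ then it vanishes for $g_{r,j}^{(l)}=f^{-1}f_{r,j}^{(l)}$ as well. The one delicate point, which I regard as the main obstacle, is the growth with $p$ of the constant in $\|\alpha_1(f)-f\|$; the argument succeeds only because this estimate is carried out after $p$ has been fixed, so $\eps$ may legitimately be chosen in terms of $p$.
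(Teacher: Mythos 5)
Your proposal is correct and follows essentially the same route as the paper: both proofs renormalize the towers by the inverse of the total sum $f$ (legitimate as a c.p.c.\ construction precisely because $C(T)$ is commutative), derive approximate $\alpha_1$-invariance of $f$ from conditions (3) and (4), and transfer it to $f^{-1}$ to recover the shift conditions, with $\eps$ chosen after $p$ is fixed. The only differences are cosmetic: the paper phrases the argument via the c.p.c.-map reformulation of Proposition~\ref{dimrok-def} and controls $\|\alpha_1(f^{-1})-f^{-1}\|$ by polynomial approximation of a cut-off inverse function, whereas you work with the tower elements directly and use the resolvent identity $a^{-1}-b^{-1}=a^{-1}(b-a)b^{-1}$, which is if anything slightly cleaner.
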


\begin{proof}
We clearly have $\dimrokhbar (C(T),\alpha) \le \dimrokh(C(T),\alpha)$. For the reverse inequality, assume 
\[
\dimrokhbar (C(T),\alpha) \le d
\]
and suppose we are given $ p \in \mathbb{N}$, $ 0 < \bar{\eps} \le 1$ and $\F \subset C(T)$ finite  (in fact, the subset $\F$ is irrelevant here since $C(T)$ is commutative).

Define $f \in C([0,d+1])$ by 
\[
f(t):= 
\left\{
\begin{array}{ll}
t^{-1} & \mbox{if } t \ge 1/2 \\
0 & \mbox{if } t=0\\
\mbox{linear} & \mbox{else}.
\end{array}
\right.
\]
By approximating $f$ uniformly by polynomials we can find 
\[
0< \delta \le \frac{\bar{\eps}}{4}
\]
such that, for any positive $a$ with $\|a\|<d+1$, if $\| \alpha_{1}(a)-a\| < \delta$ then $\|\alpha_{1}(f(a)) - f(a)\|<\frac{\bar{\eps}}{2}$. 

Set 
\[
\eps:= \frac{\delta}{2(d+1)};
\]
since $\dimrokhbar (C(T),\alpha) \le d$, there are c.p.c.\ maps
\[
\zeta^{(l)}: \mathbb{C}^{p} \oplus \mathbb{C}^{p+1} \to C(T) (=\mathcal{A})
\]
satisfying \ref{dimrok-def}(1), (2'), (3), (4) and (5).  
Note that by \ref{dimrok-def}(2') and since the $\zeta^{(l)}$ are c.p.c.\ we have 
\[
\frac{1}{2} \cdot 1_{C(T)} \le \sum_{l=0}^{d} \zeta^{(l)}(1) \le (d+1) \cdot 1_{C(T)},
\]
whence
\begin{equation}
\label{ww1}
f\left(\sum_{l=0}^{d} \zeta^{(l)}(1) \right) = \left(\sum_{l=0}^{d} \zeta^{(l)}(1) \right)^{-1}.
\end{equation}
From \ref{dimrok-def}(3), (4) we see that 
\[
\left\| \alpha_{1}\left(\sum_{l=0}^{d} \zeta^{(l)}(1) \right) - \sum_{l=0}^{d} \zeta^{(l)}(1) \right\| \le 2(d+1) \eps = \delta.
\]
By the choice of $\delta$ and by \eqref{ww1} this entails 
\begin{equation}
\label{ww17}
\left\|  \alpha_{1} \left( \left(\sum_{l=0}^{d} \zeta^{(l)}(1)\right)^{-1} \right)- \left(\sum_{l=0}^{d} \zeta^{(l)}(1) \right)^{-1}     
\right\| \le \frac{\bar{\eps}}{2}.
\end{equation}
Define maps 
\[
\bar{\zeta}^{(l)} :\mathbb{C}^{p} \oplus \mathbb{C}^{p+1} \to C(T)
\]
by 
\[
\bar{\zeta}^{(l)}(\, .\,) := \left( \sum_{k=0}^{d} \zeta^{(k)}(1)    \right)^{-1} \zeta^{(l)}(\, .\,)
\]
for $l=0,\ldots,d$; it is clear that the $\bar{\zeta}^{(l)}$ are c.p.c.\ and that 
\[
\sum_{l=0}^{d} \bar{\zeta}^{(l)}(1) = 1_{C(T)},
\]
whence \ref{dimrok-def}(2) holds for the $\bar{\zeta}^{(l)}$ (in fact, with $0$ in place of $\eps$).  

By \ref{dimrok-def}(1), (2') and since $C(T)$ is commutative, we have 
\[
\| \bar{\zeta}^{(l)}(e) \bar{\zeta}^{(l)}(e') \| \le 4 \eps \|e\| \|e'\| \le \frac{\bar{\eps}}{d+1} \|e\| \|e'\|
\] 
whenever $e,e' \in \mathbb{C}^{p} \oplus \mathbb{C}^{p+1}$ are positive orthogonal elements; it follows that \ref{dimrok-def}(1) holds for the 
$\bar{\zeta}^{(l)}$ and $\bar{\eps}$ in place of $\zeta^{(l)}$ and $\eps$. 

\ref{dimrok-def}(5)  holds automatically for the $\bar{\zeta}^{(l)}$ (again with $0$ in place of $\eps$) since $\mathcal{F}  $ and the range of 
the $\bar{\zeta}^{(l)}$ are in $C(T)$. 

Finally, we estimate for each $l \in \{0,\ldots,d\}$ and $0 \le e \in \mathbb{C}^{p} \oplus \mathbb{C}^{p+1}$
\begin{eqnarray*}
\lefteqn{ \left\| \alpha_{1} ( \bar{\zeta}^{(l)}(e)   )  - \bar{\zeta}^{(l)}(\tilde{\sigma}(e))   \right\| } \\
& = & \left\| \alpha_{1} \left(\left( \sum_{k=0}^{d}  \zeta^{(k)}(1)  \right)^{-1} \zeta^{(l)}(e)  \right)- \left( \sum_{k=0}^{d} \zeta^{(k)}(1)  
\right)^{-1} \zeta^{(l)}(\tilde{\sigma}(e)) \right\| \\
& \stackrel{\eqref{ww17}}{\le} &  \left\|  \left( \sum_{k=0}^{d} \zeta^{(k)}(1)   \right)^{-1}   \left( \alpha_{1} (\zeta^{(l)}(e))  - 
\zeta^{(l)} (\tilde{\sigma}(e))   \right)     \right\|  + \frac{\bar{\eps}}{2} \|e\| \\
& \stackrel{\ref{dimrok-def}(3)}{\le} & \left( 2 \eps + \frac{\bar{\eps}}{2} \right) \|e\| \\
& \le & \bar{\eps} \|e\|,
\end{eqnarray*}
so \ref{dimrok-def}(3) holds for $\bar{\zeta}^{(l)}$ and $\bar{\eps}$ in place of $\zeta^{(l)}$ and $\eps$.

We have now shown \eqref{dominated-Rokhlin-1}. Literally the same proof also yields the single tower version.
\end{proof}

Rokhlin dimension with single towers in general does not coincide with Rokhlin dimension. That is already true in the case of the regular Rokhlin property: there are automorphisms with the Rokhlin property but in which requiring double-towers is essential. One way to see this is via $K$-theoretic considerations (cf.\ Remarks~\ref{Rokhlin-rems}(ii) and \ref{Rokhlin-rems2}(vi)): for example, if $\alpha$ is an approximately inner automorphism which has a Rokhlin tower then all Rokhlin projections are equivalent in $K_0$, and therefore if $\alpha$ has the Rokhlin property with single towers then $K_0(\A)$ must be divisible -- and it is well known that there are Rokhlin automorphisms on $C^{*}$-algebras which do not have divisible $K_{0}$, e.g.\ the CAR algebra.  However, if we allow for higher Rokhlin dimensions, then we see that the distinction is relatively mild:

\begin{Prop}
\label{prop:single-Rokhlin-tower}
Let $\A$ be a unital $C^*$-algebra and $\alpha: \mathbb{Z} \to \mathrm{Aut}(\A)$ an action. Then 
\begin{equation}
\label{prop:single-Rokhlin-tower-1}
\dimrokh(\alpha) \leq \dimrokhsingle(\alpha) \leq 2\dimrokh(\alpha)+1 
\end{equation}
and
\begin{equation}
\label{prop:single-Rokhlin-tower-2}
\dimrokhbar(\alpha) \leq \dimrokhbarsingle(\alpha) \leq 2\dimrokhbar(\alpha)+1. 
\end{equation}
\end{Prop}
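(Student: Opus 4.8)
The plan is to prove the two chains of inequalities \eqref{prop:single-Rokhlin-tower-1} and \eqref{prop:single-Rokhlin-tower-2} in parallel, since the argument for the $\dimrokhbar$ versions should be identical to the one for the ordinary versions with $(2)$ replaced by $(2')$ throughout. The left-hand inequalities $\dimrokh(\alpha) \le \dimrokhsingle(\alpha)$ and $\dimrokhbar(\alpha) \le \dimrokhbarsingle(\alpha)$ are immediate, since a single-tower system is in particular a system of the type required in Definition~\ref{Rokhlin-dim-single-auto}a) (one of the two towers in each color is simply zero). So the content is entirely in the two right-hand inequalities, and these should have essentially the same proof. I will work with the reformulation in Proposition~\ref{dimrok-def}, which packages a double tower of color $l$ as a single c.p.c.\ map $\zeta^{(l)}:\mathbb{C}^{p}\oplus\mathbb{C}^{p+1}\to\mathcal{A}$.

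The key idea is that a double tower of a given color can be split into two single towers \emph{of different colors}: the summand $\mathbb{C}^{p}$ of $\zeta^{(l)}$ carries the first tower and the summand $\mathbb{C}^{p+1}$ carries the second, and these live on orthogonal summands of the domain. Concretely, suppose $\dimrokh(\alpha)\le d$ and fix $p$, $\F$, $\eps$; take maps $\zeta^{(0)},\ldots,\zeta^{(d)}$ as in Proposition~\ref{dimrok-def}a). For each $l$ I would define $\zeta^{(l)}_{\mathrm{a}}$ to be $\zeta^{(l)}$ restricted (extended by zero) to the first summand $\mathbb{C}^{p}$, and $\zeta^{(l)}_{\mathrm{b}}$ to be $\zeta^{(l)}$ restricted to the second summand $\mathbb{C}^{p+1}$. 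Each of these is a single tower, and together the $2(d+1)$ maps $\zeta^{(0)}_{\mathrm{a}},\zeta^{(0)}_{\mathrm{b}},\ldots,\zeta^{(d)}_{\mathrm{a}},\zeta^{(d)}_{\mathrm{b}}$ form a system of colors $0,\ldots,2d+1$, giving $\dimrokhsingle(\alpha)\le 2d+1 = 2\dimrokh(\alpha)+1$. I then re-index each single tower as a map on $\mathbb{C}^{p}\oplus\mathbb{C}^{p+1}$ that vanishes on one summand, exactly as in Proposition~\ref{dimrok-def}b). The $\dimrokhbar$ inequality \eqref{prop:single-Rokhlin-tower-2} follows by the same recipe, observing that the covering estimate $(2')$ for the original $\zeta^{(l)}$ passes verbatim to the covering estimate $(2')$ for the split family, since $\sum_l\zeta^{(l)}(1)=\sum_l\big(\zeta^{(l)}_{\mathrm{a}}(1)+\zeta^{(l)}_{\mathrm{b}}(1)\big)$.

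The step that needs care — and which I expect to be the main obstacle — is verifying the \emph{translation} conditions Proposition~\ref{dimrok-def}(3) and (4) for the split towers, because cutting a double tower into its two constituent single towers interacts nontrivially with the boundary/cyclicity behavior. A double tower of height $(p,p+1)$ has its shift $\tilde\sigma$ acting cyclically on each summand only in the combined condition (4) $\|\alpha_1(\zeta^{(l)}((e_p,f_{p+1}))) - \zeta^{(l)}((e_1,f_1))\|\le\eps$, which mixes the top elements of the two towers. When I separate the summands, each single tower $\zeta^{(l)}_{\mathrm{a}}$ and $\zeta^{(l)}_{\mathrm{b}}$ must individually satisfy cyclic translation (per Remark~\ref{Rokhlin-rems2}(ii), in the single-tower case condition (4) forces (3) to hold cyclically, i.e.\ with wraparound on the same tower). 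Thus I must check that the cyclic shift on each summand separately is still approximately intertwined by $\alpha_1$; the interior intertwining $\alpha_1(\zeta^{(l)}(e))\approx\zeta^{(l)}(\tilde\sigma(e))$ for $e\perp(e_p,f_{p+1})$ restricts cleanly to each summand, but the wraparound at the top of each single tower requires combining conditions (3) and (4) of the original double tower and absorbing the resulting error (at most a fixed multiple of $\eps$) into the single-tower tolerance. I would conclude by noting that all these error terms are bounded by a constant multiple of $\eps$ independent of $p$ and $\F$, so passing to the appropriate $\eps$ at the outset yields genuine single-tower systems within the prescribed tolerance, completing both inequalities.
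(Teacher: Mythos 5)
You set up the reductions correctly (the left-hand inequalities are indeed trivial, the $\dimrokhbar$ case is parallel, and the reformulation via Proposition~\ref{dimrok-def} is a reasonable framework), and you correctly located the crux at the wraparound condition. But your resolution of that crux is wrong, and the error is fatal rather than cosmetic. Splitting $\zeta^{(l)}$ into its restrictions to the two summands $\mathbb{C}^{p}$ and $\mathbb{C}^{p+1}$ cannot satisfy the cyclic condition, because condition (4) of Definition~\ref{Rokhlin-dim-single-auto} controls only the image under $\alpha_{1}$ of the \emph{sum} of the two tower tops, $\|\alpha_{1}(f_{0,p-1}^{(l)}+f_{1,p}^{(l)})-(f_{0,0}^{(l)}+f_{1,0}^{(l)})\|<\eps$; it gives no information about the individual terms. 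In general $\alpha_{1}(f_{0,p-1}^{(l)})$ is genuinely spread over both $f_{0,0}^{(l)}$ and $f_{1,0}^{(l)}$. Concretely, take an irrational rotation and a Kakutani--Rokhlin double tower over a base $Z=Z_{0}\sqcup Z_{1}$ with first return times $p$ and $p+1$: the return map sends $h^{p-1}(Z_{0})$ into $Z$, but $h^{p}(Z_{0})$ meets both $Z_{0}$ and $Z_{1}$ (it cannot equal $Z_{0}$, since $Z_{0}$ would then be invariant under the minimal map $h^{p}$). So the part of $\alpha_{1}(f_{0,p-1}^{(l)})$ sitting over $Z_{1}$ has norm close to $1$, and $\|\alpha_{1}(f_{0,p-1}^{(l)})-f_{0,0}^{(l)}\|\approx 1$ \emph{no matter how small $\eps$ is}. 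The defect is of order $1$, not ``a fixed multiple of $\eps$,'' so it cannot be absorbed by shrinking the tolerance at the outset. This uncontrolled mixing of tower tops is precisely the reason double towers are needed in the Rokhlin property at all (compare the $K$-theoretic obstructions discussed in Remark~\ref{Rokhlin-rems2}), so any proof that simply separates the two towers of one color must fail.

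The paper's proof does the opposite of separating: it \emph{splices} the two towers of each color together, twice, using weights. With tent functions $\mu_{r}(n)=1-\frac{|(p-1+r)/2-n|}{(p-1+r)/2}$ one forms $g_{j}^{(l)}=\mu_{0}(j)f_{0,j}^{(l)}+\mu_{1}(j)f_{1,j}^{(l)}$ (indices read cyclically mod $p$, resp.\ $p+1$): since $\mu_{r}$ vanishes at both ends of each tower, the uncontrolled behavior at the tops is multiplied by a weight near $0$ and becomes harmless, while in the interior the shift error is governed by the Lipschitz constant $\sim 1/p$ of $\mu_{r}$. The complementary weights $1-\mu_{r}$, which vanish in the middle instead, produce a second cyclic tower per color whose seam is relocated to the middle of the original double tower. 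Because $\mu_{r}+(1-\mu_{r})=1$, the covering sum is unchanged, and one obtains $2(d+1)$ single-tower colors with tolerance $2\eps$, provided $p\gtrsim 1/\eps$; small $p$ is handled by building towers of height $kp$ with tolerance $\eps/k$ and then summing in blocks, $\tilde{g}_{j}^{(l)}=\sum_{n=0}^{k-1}g_{j+np}^{(l)}$. This weighting/splicing device is what must replace your restriction-to-summands step.
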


\begin{proof}
As in \ref{dominated-Rokhlin}, we only show \eqref{prop:single-Rokhlin-tower-1}; the same argument will also yield \eqref{prop:single-Rokhlin-tower-2}.

The left hand inequality is trivial. As for the right hand one, we may assume that $\alpha$ has finite Rokhlin dimension. Let $f_{r,j}^{(l)}$, $l=0,1,\ldots,d$, $r=0,1$, $j=0,1,\ldots,p-1+r$ be Rokhlin tower elements for $\alpha$, with respect to a given $\epsilon>0$, height $p$ and finite set $\F \subseteq \A$. Let us first assume that $p$ is large enough so that $\epsilon>\frac{1}{p-1/2}$.  We shall construct single towers $(g_j^{(l)})$ for $l=0,1,\ldots , 2d$, the same finite set $\F$ and with $2\epsilon$ instead of $\epsilon$. 

Define decay factors  
$\mu_r:\{0,1,\ldots,p-1+r\} \to \R$ by 
$$
\mu_r(n) = 1-\frac{|(p-1+r)/2 - n|}{(p-1+r)/2},
$$
\begin{center}
\begin{picture}(230,65)
 \put(0,10){\vector(1,0){150}}
 \put(5,3){\vector(0,1){50}}
 \put(65,7){\line(0,1){6}}
 \put(125,7){\line(0,1){6}}
 \put(4,40){\line(1,0){6}}
\thicklines
 \put(5,10){\line(2,1){60}}
 \put(65,40){\line(2,-1){60}}
 \put(1,40){\makebox(0,0){$1$}}
 \put(125,-3){\makebox(0,0)[b]{\footnotesize $p-1+r$\normalsize}}
 \put(75,55){\makebox(0,0){$\mu_r(x)$}}
\end{picture}
\end{center}

where we denote $\mu_0(p) = 0$. We use this decay factor to merge each double tower into two separate (cyclic) towers. Schematically, we use the decay factor $\mu_r$ which is large in the middle and small in the ends to bunch up the tower terms as follows
$$
\xy
(0,10)*{f_{0,0}};
(0,0)*{f_{1,0}};
(0,5)*\xycircle(5,10){};
(15,10)*{f_{0,1}};
(15,0)*{f_{1,1}};
(15,5)*\xycircle(5,10){};
(30,10)*{f_{0,2}};
(30,0)*{f_{1,2}};
(30,5)*\xycircle(5,10){};
(45,10)*{\cdots}="";
(45,0)*{\cdots}="";
(60,10)*{f_{0,p-1}}="";
(60,0)*{f_{1,p-1}}="";
(60,5)*\xycircle(7,10){};
(75,0)*{f_{1,p}}="";
(75,00)*\xycircle(5,5){};
\endxy
$$
and we use $1-\mu_r$, which is small at the center, to bunch up the tower terms like this (treat $p$ as even for the purpose of the diagram):
$$
\xy
(0,10)*{f_{0,0}};
(0,0)*{f_{1,0}};
(0,5)*\xycircle(5,10){};
(15,10)*{\cdots};
(15,0)*{\cdots};
(30,10)*{f_{0,\frac{p}{2} -1}};
(30,0)*{f_{1,\frac{p}{2} -1}};
(30,5)*\xycircle(7,10){};
(45,0)*{f_{1,\frac{p}{2}}};
(45,00)*\xycircle(5,5){};
(60,10)*{f_{0,\frac{p}{2}}}="";
(60,0)*{f_{1,\frac{p}{2}+1}}="";
(60,5)*\xycircle(7,10){};
(75,10)*{\cdots}="";
(75,0)*{\cdots}="";
(90,10)*{f_{0,p-1}}="";
(90,0)*{f_{1,p}}="";
(90,5)*\xycircle(6,10){};
\endxy
$$

For $l=0,1,\ldots,d$, and $j=0,1,\ldots,p$, define tower elements as follows, where in the formulas below $f_{r,j}^{(l)}$ for $j>p-1+r$ is meant to be read modulo $p+r$:
$$
g_j^{(l)} = \mu_0(j)f_{0,j}^{(l)} + \mu_1(j)f_{1,j}^{(l)}  \;\;\; , \;\;\; j=0,1,\ldots,p
$$
and for $l=1, \ldots , d$
$$
g_{j}^{(l+d)} = (1-\mu_0(j))f_{0,j}^{(l)} + (1-\mu_1(j))f_{1,j}^{(l)} \;\;\; j=0,1,\ldots,\lceil p/2 \rceil -1
$$
$$
g_{j}^{(l+d)} = (1-\mu_1(j))f_{1,j}^{(l)} \;\;\; \mathrm{for }\;  j=\lceil p/2 \rceil
$$
$$
g_{j}^{(l+d)} = (1-\mu_0(j-1))f_{0,j-1}^{(l)} + (1-\mu_1(j))f_{1,j}^{(l)} \;\;\; j=\lceil p/2 \rceil+1,\ldots ,p.
$$ 
One now readily checks that $g_j^{(l)}$ satisfy the requirements.

If $p$ is not sufficiently large, we choose a $k$ such that $\epsilon>\frac{1}{kp-3/2}$, choose Rokhlin towers $(f_{r,j}^{(l)})$ for $j=1,...,kp-1+r$ and $l=0,\ldots,d$ with tolerance $\epsilon/k$ instead of $\epsilon$, and repeat the previous procedure to obtain  Rokhlin towers $(g_j^{(l)})$ for $j = 0, 1,..., kp - 1$ and $l=0,\ldots,2d$.
 Defining $\tilde{g}_j^{(l)} = \sum_{n=0}^{k-1}g_{j+np}^{(l)}$ for $j=0,1,\ldots,p-1$ and  $l=0,\ldots,2d$ gives us  Rokhlin towers of height $p$ as required.
\end{proof}

\begin{Rmk}
It follows from the preceding proof  that in fact all the single towers can be chosen to be of the same height (rather than allowing for some to be of height $p$ and others of height $p+1$).
\end{Rmk}

\section{Genericity in the $\Zh$-stable case}
\label{section:genericity}

\noindent
We recall that the automorphism group of a $C^*$-algebra $\A$ can be endowed with the topology of pointwise convergence generated by the  sets 
\[
U_{\alpha,a} = 
\{\beta \mid \|\beta(a)-\alpha(a)\| + \|\beta^{-1}(a) - \alpha^{-1}(a)\|< 1\},
\] 
where $\alpha$ runs over all $\alpha \in \aut(\A)$ and  $a$ runs over all elements of $\A$. If $\A$ is separable, then this topology is Polish.

In this section we want to show that the property of having finite Rokhlin dimension is generic, more precisely the automorphisms  a $\Zh$-stable $C^*$-algebra which have Rokhlin dimension at most 1  form a dense $G_{\delta}$ set in the topology of pointwise norm convergence.

Recall   that for  $p \in \mathbb{N}$  the dimension drop interval $\mathcal{Z}_{p,p+1}$ is given as
\[
\Zh_{p,p+1} = \{f \in C([0,1] , M_p\otimes M_{p+1}) \mid f(0) \in M_p \otimes 1 \; , \; f(1) \in 1 \otimes M_{p+1}\}
\]
and that the Jiang-Su algebra $\mathcal{Z}$ is an inductive limit of such dimension drop intervals.

\begin{Lemma}
\label{Lemma: dimension drop towers}
There exists a unitary $u \in \Zh_{p,p+1}$ and positive elements 
\[
f_0,\ldots,f_{p-1},g_0,\ldots,g_p \in \Zh_{p,p+1}
\]
such that
 \begin{enumerate}
\item $f_i f_j = 0$, $g_i g_j = 0$ and $[f_{i},g_{j}]=0$ for all $i \neq j$.
\item $f_0+\ldots+f_{p-1}+g_0+\ldots+g_p = 1$.
\item $uf_ju^* = f_{j+1}$ and $ug_ju^*=g_{j+1}$ for all $j$, where addition is modulo $p$ or $p+1$, respectively.
\end{enumerate} 
\end{Lemma}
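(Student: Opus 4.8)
The plan is to write down $u$, the $f_i$ and the $g_j$ completely explicitly as $M_p\otimes M_{p+1}$-valued functions on $[0,1]$ and then to check the (exact, non-approximate) relations by inspection. First I would fix notation: let $e_0^{(n)},\dots,e_{n-1}^{(n)}$ denote the standard diagonal rank-one projections in $M_n$ and let $X_n\in M_n$ be the cyclic shift unitary, so that $X_n e_i^{(n)} X_n^* = e_{i+1}^{(n)}$ with indices read modulo $n$; set $V=X_p\otimes X_{p+1}$. Before constructing anything I would observe that the endpoint constraints of $\Zh_{p,p+1}$ together with (1)--(3) make the following boundary data natural, and at $t=0$ essentially force it: since $u(0)\in M_p\otimes 1$ acts trivially on the second leg, a height-$(p+1)$ cyclically permuted family of pairwise orthogonal positive elements of $M_p\otimes 1\cong M_p$ must vanish (there is no room for $p+1$ nonzero orthogonal elements in $M_p$), so $g_j(0)=0$ and $f_i(0)=e_i^{(p)}\otimes 1$; symmetrically one takes $f_i(1)=0$, $g_j(1)=1\otimes e_j^{(p+1)}$, $u(1)\in 1\otimes M_{p+1}$. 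This also pins down the natural midpoint configuration $u=V$, $f_i=\tfrac12 e_i^{(p)}\otimes 1$, $g_j=\tfrac12\,1\otimes e_j^{(p+1)}$, which is symmetric under interchanging the two tensor factors.

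I would then build the path separately on $[0,\tfrac12]$ and $[\tfrac12,1]$, making both halves meet at this midpoint configuration. On $[0,\tfrac12]$ the idea is to keep $u(t)=X_p\otimes Y(t)$, where $Y\colon[0,\tfrac12]\to U(M_{p+1})$ runs from $1$ to $X_{p+1}$; the first leg permutes the $f$-tower while the second leg is used to give birth to the $g$-tower. The crucial point is the order of events: I would let $Y$ first travel from $1$ to the genuine shift $X_{p+1}$, with $Y(t)=X_{p+1}$ already for $t\in[\tfrac14,\tfrac12]$, keeping $g_j\equiv 0$ on $[0,\tfrac14]$; only afterwards, on $[\tfrac14,\tfrac12]$ with $Y\equiv X_{p+1}$ held fixed, do I switch the tower on as $g_j(t)=\tfrac{r(t)}{2}\,1\otimes e_j^{(p+1)}$ with a scalar ramp $r$ increasing from $0$ to $1$. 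The $f$-tower is filled in by a scalar so that the total stays the identity: $f_i(t)=\tfrac{2-r(t)}{2}\,e_i^{(p)}\otimes 1$, whence $\sum_i f_i+\sum_j g_j=\tfrac{2-r}{2}\cdot 1+\tfrac{r}{2}\cdot 1=1$ throughout. On $[\tfrac12,1]$ I would run the mirror-image construction (interchange the roles of the two legs and of the two towers, and reverse time), letting the $f$-tower die out in the same two-phase manner, and I would arrange both ramps so that everything agrees at $t=\tfrac12$ with the symmetric midpoint configuration.

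The verifications are then routine and, importantly, \emph{exact} rather than approximate: orthogonality within each tower and $[f_i,g_j]=0$ hold because the $f$'s live in the first leg, the $g$'s in the second leg, and the filling factors are scalars; the intertwining $u f_i u^*=f_{i+1}$ and $u g_j u^*=g_{j+1}$ holds because, on the region where a given tower is nonzero, the relevant leg of $u$ is exactly the shift $X_p$ (resp.\ $X_{p+1}$); and the membership and continuity conditions for $\Zh_{p,p+1}$ are immediate from the endpoint values of $Y$ and of the ramps. The one genuine obstacle, and the whole reason for the two-phase design, is that a cyclically permuted family of $n$ pairwise orthogonal \emph{nonzero} positive elements forces the implementing unitary to be conjugate to a scalar multiple of the order-$n$ shift $X_n$: the common support is all-or-nothing, being either $0$ or all of $M_n$. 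Consequently one cannot continuously shrink a full height-$(p+1)$ tower down to $0$ while the second leg of $u$ remains a genuine shift; the resolution is precisely to move the unitary onto the shift first, with the tower switched off, and only then to grow the tower while the unitary is held fixed. The remaining work is the bookkeeping needed to match all ramps at $t=\tfrac12$.
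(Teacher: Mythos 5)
Your proposal is correct and is essentially the paper's own construction: the paper likewise takes $u = v \otimes w$ with each tensor leg equal to a genuine cyclic shift precisely on the region where the corresponding tower is nonvanishing (the second leg moves from $1$ to the order-$(p+1)$ shift on $[0,1/3]$ while the $g$-tower is still zero, then is held fixed while a scalar ramp $h$ grows the $g$-tower and shrinks the $f$-tower, symmetrically at the other end), with the towers being scalar ramps times fixed orthogonal projections in each leg. The only differences are the choice of breakpoints and the bookkeeping of the ramps.
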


\begin{proof}	
Let $h \in C([0,1])$ be defined by: \begin{center}
\begin{picture}(230,65)
 \put(0,10){\vector(1,0){210}}
 \put(5,3){\vector(0,1){50}}
 \put(65,7){\line(0,1){6}}
 \put(125,7){\line(0,1){6}}
 \put(185,7){\line(0,1){6}}
 \put(4,40){\line(1,0){6}}
\thicklines
 \put(5,40){\line(1,0){60}}
 \put(65,40){\line(2,-1){60}}
 \put(125,10){\line(1,0){60}}
 \put(1,40){\makebox(0,0){$1$}}
 \put(65,-1){\makebox(0,0)[b]{\footnotesize $1/3$\normalsize}}
 \put(125,-1){\makebox(0,0)[b]{\footnotesize $2/3$\normalsize}}
 \put(185,-1){\makebox(0,0)[b]{\footnotesize  $1$\normalsize}}
 \put(75,55){\makebox(0,0){$h(x)$}}
\end{picture}
\end{center}
Let $v_x \in M_p$ be a continuous path of unitary matrices such that $v_x$ is a fixed cyclic permutation matrix of order $p$ for all 
$x \in [0,2/3]$, and $v_1 = 1$. Similarly, let $w_x \in M_{p+1}$ be a continuous path of unitaries, which is a fixed cyclic permutation 
matrix of order $p+1$ for all $x \in [1/3,1]$ and $w_0 = 1$. Let $u_x = v_x \otimes w_x$, then $u = (u_x)_{x \in [0,1]}$ is a unitary 
in $\Zh_{p,p+1}$. 
Now, choose projections $a_0,\ldots,a_{p-1} \in M_p$ which add up to 1, such that $v_0 a_j v_0^* = a_{j+1}$ (modulo $p$), and similarly 
choose projections $b_0,\ldots,b_p \in M_{p+1}$ which add up to 1 such that $v_1 b_j v_1^*  = b_{j+1}$ modulo $p+1$. 

Setting $f_j(x) = h(x)\cdot a_x \otimes 1$, $g_j = (1-h(x)) \cdot 1 \otimes b_x$ gives us elements as required.
\end{proof}

\begin{Lemma}
\label{Lemma: stability}
Let $\A$ be a separable $C^{*}$-algebra. Let $a_1,\ldots,a_n,b_1,\ldots,b_n \in \A$, set $c = \sum_{i=1}^n a_i \otimes_{\mathrm{max}} b_i \in \A \otimes_{\mathrm{max}} \A$, and let $p(x,x^*)$ be a noncommutative polynomial. For any $\epsilon>\|p(c,c^*)\|$ there are a $\delta>0$ and a finite subset $W \subseteq \A$ such that the following holds. 

For any $C^*$-algebra $\B$ and any two $^{*}$-homomorphisms $\varphi,\psi:\A \to \B$ which satisfy $\|[\varphi(x),\psi(y)]\|<\delta$ for all $x,y \in W$, if we set $d = \sum_{i=1}^n\varphi(a_i)\psi(b_i)$ then we have that $\|p(d,d^*)\|<\epsilon$.
\end{Lemma}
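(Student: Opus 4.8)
The plan is to reduce to the exact-commutation case by an ultraproduct (sequence-algebra) contradiction argument. First I would record the motivating observation: if $\varphi$ and $\psi$ happened to have exactly commuting ranges, then by the universal property of the maximal tensor product there would be a $^*$-homomorphism $\varphi \times \psi : \A \otimes_{\mathrm{max}} \A \to \B$ with $(\varphi \times \psi)(a \otimes_{\mathrm{max}} b) = \varphi(a)\psi(b)$; consequently $d = (\varphi\times\psi)(c)$, so $p(d,d^*) = (\varphi\times\psi)(p(c,c^*))$, and since $^*$-homomorphisms are contractive we would get $\|p(d,d^*)\| \le \|p(c,c^*)\| < \epsilon$. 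The content of the lemma is that approximate commutation already suffices, and the role of $\otimes_{\mathrm{max}}$ is precisely that its universal property produces $\varphi\times\psi$ from commuting representations into an \emph{arbitrary} target $C^*$-algebra.

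To promote this to the approximate setting I would argue by contradiction. Fixing $\epsilon > \|p(c,c^*)\|$ and assuming the conclusion fails, I use separability to choose a countable dense subset of $\A$ and let $W_k$ be its first $k$ elements. Applying the negated statement with $\delta = 1/k$ and $W = W_k$ yields, for each $k$, a $C^*$-algebra $\B_k$ and $^*$-homomorphisms $\varphi_k,\psi_k : \A \to \B_k$ with $\|[\varphi_k(x),\psi_k(y)]\| < 1/k$ for $x,y \in W_k$, yet with $\|p(d_k,d_k^*)\| \ge \epsilon$, where $d_k = \sum_i \varphi_k(a_i)\psi_k(b_i)$.

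Next I would pass to the quotient $\B_\infty := \big(\prod_k \B_k\big)/\big(\bigoplus_k \B_k\big)$, in which the norm of a represented sequence equals its $\limsup$. The coordinatewise maps assemble into $^*$-homomorphisms $\Phi = (\varphi_k)_k$ and $\Psi = (\psi_k)_k$ into $\prod_k \B_k$, and composing with the quotient map gives $\bar\Phi, \bar\Psi : \A \to \B_\infty$. For $x,y$ in the chosen dense subset the commutators $[\varphi_k(x),\psi_k(y)]$ are eventually smaller than $1/k$, hence vanish in $\B_\infty$; by continuity $\bar\Phi$ and $\bar\Psi$ then have exactly commuting ranges. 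The universal property of $\otimes_{\mathrm{max}}$ now supplies $\bar\Phi \times \bar\Psi : \A \otimes_{\mathrm{max}} \A \to \B_\infty$, and applying it to $p(c,c^*)$ gives $\|p(D,D^*)\| \le \|p(c,c^*)\| < \epsilon$, where $D = \sum_i \bar\Phi(a_i)\bar\Psi(b_i)$ is the class of $(d_k)_k$. But $p(D,D^*)$ is the class of $(p(d_k,d_k^*))_k$, whose norm is $\limsup_k \|p(d_k,d_k^*)\| \ge \epsilon$, a contradiction.

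The one genuine obstacle is the absence, in the approximate regime, of any $^*$-homomorphism out of $\A \otimes_{\mathrm{max}}\A$ to compare against: a direct monomial-by-monomial estimate (rearranging $d,d^*$ into $\varphi$-words times $\psi$-words using the small commutators) still leaves one with no contractive map relating the result to $\|p(c,c^*)\|_{\mathrm{max}}$. The ultraproduct resolves this by making the commutation \emph{exact} in the limit, so that the needed map genuinely exists. The remaining points — that $\Phi,\Psi$ are well-defined $^*$-homomorphisms, that the quotient norm computes the $\limsup$, and that vanishing of commutators on a dense set extends to all of $\A$ by continuity — are routine, and the last is where separability enters.
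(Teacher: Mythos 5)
Your proposal is correct and is essentially the paper's own argument: the paper also negates the statement along an increasing sequence of finite sets with dense union, passes to the sequence algebra $\prod_k \B_k / \bigoplus_k \B_k$ where the induced homomorphisms commute exactly, and invokes the universal property of $\otimes_{\mathrm{max}}$ to contradict $\limsup_k \|p(d_k,d_k^*)\| \ge \epsilon$. Your write-up is in fact more detailed than the paper's (which leaves the density/continuity step and the $\limsup$ identification implicit), but the route is the same.
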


\begin{proof}
Suppose not. Let $W_m$ be an increasing sequence of finite sets with dense union, then for any $m>0$ we can find a $C^*$-algebra $\B_m$ and a 
pair of homomorphisms $\varphi_m,\psi_m:\A \to \B_m$ such that $\|[\varphi_m(x),\psi_m(y)]\|<1/m$ for all $x,y \in W_m$ but if we set 
$d_m =\sum_{i=1}^n \varphi_m(a_i)\psi_m(b_i)$ then  $\|p ( d_m,d_m^* )\|\geq \eps$. Let $\bar{\varphi} = (\varphi_1,\varphi_2,\ldots)$, 
$\bar{\psi} = (\psi_1,\psi_2,\ldots)$ be the induced homomorphisms $\bar{\varphi},\bar{\psi}:\A \to \prod \B_n / \bigoplus \B_n$. Notice that 
$$
[\bar{\varphi}(x),\bar{\psi}(y)] = 0
$$ for any $x,y$, and therefore we have a well-defined homomorphism $$
\bar{\varphi} \otimes \bar{\psi}: \A \otimes_{\max}\A \to \prod \B_m / \bigoplus \B_m \; .
$$ 
But then it follows that
$$
 \|p( \bar{\varphi}(c), \bar{\varphi}(c)^* )\|\ge \eps
$$
 - contradiction.
\end{proof}

We recall the following simple application of  functional calculus:
\begin{Lemma}
\label{Lemma: unitary stability}
For any $\eps>0$ there is a $\delta>0$ such that for any unital $C^*$-algebra $\A$, if $a \in \A$ satisfies 
$\max\{\|1-a^*a\|,\|1-aa^*\|\}<\delta$ then there is a unitary $u \in \A$ such that $\|u-a\|<\eps$.
\end{Lemma}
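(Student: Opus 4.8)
The plan is to produce the unitary $u$ as the polar part of $a$, namely $u = a(a^*a)^{-1/2}$, and to control $\|u-a\|$ through functional calculus. First I would fix a candidate $\delta < 1/2$ (its precise value to be pinned down at the end) and observe that under the hypothesis $\max\{\|1-a^*a\|,\|1-aa^*\|\}<\delta$ the positive elements $a^*a$ and $aa^*$ both have spectrum contained in $[1-\delta,1+\delta] \subseteq (0,\infty)$; in particular both are invertible, so $(a^*a)^{-1/2}$ and $(aa^*)^{-1/2}$ are well defined via continuous functional calculus.

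The one point that is not completely automatic is checking that $u$ is a two-sided unitary and not merely an isometry. Directly, $u^*u = (a^*a)^{-1/2}(a^*a)(a^*a)^{-1/2} = 1$. For $uu^*$ I would invoke the intertwining relation $a\,g(a^*a) = g(aa^*)\,a$, which holds for polynomials $g$ (since $a(a^*a)^n = (aa^*)^n a$) and hence, by Weierstrass approximation on the common spectrum, for the continuous function $g(t) = t^{-1/2}$. This yields $u = a(a^*a)^{-1/2} = (aa^*)^{-1/2}a$, whence $uu^* = (aa^*)^{-1/2}(aa^*)(aa^*)^{-1/2} = 1$. Thus $u$ is unitary, and this is exactly the step that forces us to assume $aa^*$, and not only $a^*a$, to be close to $1$.

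It then remains to estimate the distance. Writing $u-a = a\big((a^*a)^{-1/2}-1\big)$, I would bound
\[
\|u-a\| \le \|a\| \cdot \big\|(a^*a)^{-1/2}-1\big\| \le \sqrt{1+\delta}\cdot \sup_{t\in[1-\delta,\,1+\delta]}\big|t^{-1/2}-1\big|,
\]
using $\|a\|^2 = \|a^*a\| \le 1+\delta$ together with the fact that $(a^*a)^{-1/2}-1 = h(a^*a)$ for $h(t)=t^{-1/2}-1$, so that its norm is governed by the supremum of $|h|$ over the spectrum $[1-\delta,1+\delta]$. Since $t\mapsto t^{-1/2}$ is continuous at $t=1$, this supremum tends to $0$ as $\delta \to 0$; hence I can choose $\delta$ small enough (and $<1/2$) that the right-hand side is $<\eps$. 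Crucially, the bound refers to $\A$ only through the spectral estimates, so a single $\delta$ works for every unital $C^*$-algebra and every such $a$, as required.

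I do not expect a serious obstacle: the only genuinely substantive observation is the intertwining identity guaranteeing $uu^*=1$, and everything else is a routine continuity-of-functional-calculus estimate.
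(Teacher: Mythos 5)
Your proof is correct, and it is exactly the argument the paper has in mind: the lemma is stated there without proof, introduced only as ``a simple application of functional calculus,'' and your polar-decomposition argument $u = a(a^*a)^{-1/2}$ with the intertwining identity $a\,g(a^*a)=g(aa^*)\,a$ and the spectral estimate on $[1-\delta,1+\delta]$ is the standard way to carry that out. In particular you correctly identify the one non-routine point, namely that the hypothesis on $\|1-aa^*\|$ is what upgrades the isometry $u$ to a unitary, and your $\delta$ depends only on $\eps$ via spectral data, so the uniformity over all unital $C^*$-algebras holds as required.
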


\begin{Thm}
\label{Z-stable-generic}
Let $\A$ be a unital separable $\Zh$-stable $C^*$-algebra, then a dense $G_\delta$ set of automorphisms of $\A$ has Rokhlin dimension $\leq 1$ with commuting towers (and in fact with single Rokhlin towers). In particular, Rokhlin dimension at most $1$ is generic.

If furthermore $\A \cong \A \otimes \D$ for $\D$ a UHF algebra of infinite type then the Rokhlin property (i.e.\ Rokhlin dimension $0$) is generic.
\end{Thm}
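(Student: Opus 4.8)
The plan is to prove both assertions by the same two-step strategy: realise the relevant set of automorphisms as a dense $G_\delta$ subset of $\aut(\A)$ and then invoke the Baire category theorem. Throughout I would work with the reformulation of Rokhlin dimension in Proposition~\ref{dimrok-def} and with the explicit tower structure supplied by Lemma~\ref{Lemma: dimension drop towers}.

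First, the $G_\delta$ part. Fix a countable dense subset $\{a_1,a_2,\dots\}$ of $\A$ and, for each $p\in\N$, each rational $\eps>0$ and each finite subset $\F$ of this dense set, let $V_{p,\F,\eps}\subseteq\aut(\A)$ be the set of automorphisms admitting positive contractions $f^{(l)}_{r,j}$ ($l\in\{0,1\}$) that satisfy conditions (1)--(5) of Definition~\ref{Rokhlin-dim-single-auto} for the data $(p,\F,\eps)$ together with the commuting-towers and single-tower requirements. Each $V_{p,\F,\eps}$ is open: the tower elements are fixed elements of $\A$, conditions (1), (2), (5) do not involve the automorphism at all, and conditions (3), (4) are open constraints in the point-norm topology, hence persist under a sufficiently small perturbation of the automorphism at the finitely many relevant points. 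A routine approximation argument (replacing an arbitrary finite set by a nearby one from the dense set and shrinking $\eps$) shows that $\bigcap_{p,\F,\eps}V_{p,\F,\eps}$ is precisely the set of $\alpha$ with $\dimrokhct(\A,\alpha)\le1$ realised by single towers; thus this set is $G_\delta$. The same bookkeeping, using projections and Definition~\ref{rokhlin-original}, handles the Rokhlin-property ($\dimrokh=0$) case.

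The heart of the matter is density. Given $\alpha_0$, a basic neighbourhood determined by $\F$ and $\eps$, and $p\in\N$, I must produce $\alpha\in V_{p,\F,\eps}$ with $\|\alpha(a)-\alpha_0(a)\|$ and $\|\alpha^{-1}(a)-\alpha_0^{-1}(a)\|$ small on $\F$. Since $\A$ is $\Zh$-stable there is a unital embedding $\Phi$ of $\Zh$, and hence of its unital subalgebra $\Zh_{p,p+1}$, into the central sequence algebra $\A_\infty\cap\A'$; take $u,f_j,g_j\in\Zh_{p,p+1}$ and their shift relations as in Lemma~\ref{Lemma: dimension drop towers}, and write $U=\Phi(u)$, $F_j=\Phi(f_j)$, $G_j=\Phi(g_j)$. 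The automorphism $\alpha_0$ induces $\bar\alpha_0$ on $\A_\infty\cap\A'$, and $\bar\alpha_0\circ\Phi$ is again a unital embedding of $\Zh$. The key input is the uniqueness, up to unitary equivalence by a unitary of $\A_\infty\cap\A'$, of such embeddings: there is a unitary $W\in\A_\infty\cap\A'$ with $\bar\alpha_0(\Phi(x))=W\Phi(x)W^*$ for all $x$. Representing $U$ and $W$ by asymptotically central sequences of elements that are corrected to honest unitaries $(u_n),(w_n)$ in $\A$ by Lemma~\ref{Lemma: unitary stability}, I set $\alpha_n=\mathrm{Ad}(u_nw_n^*)\circ\alpha_0$. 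As $u_nw_n^*$ is asymptotically central, $\alpha_n\to\alpha_0$ in the point-norm topology in both directions. Lifting $F_j,G_j$ to asymptotically central $f_{j,n},g_{j,n}$, the exact relations of Lemma~\ref{Lemma: dimension drop towers} pass to approximate ones in $\A$, and a direct computation gives $\alpha_n(f_{j,n})=u_nw_n^*\alpha_0(f_{j,n})w_nu_n^*\approx u_nf_{j,n}u_n^*\approx f_{j+1,n}$, the first step using $w_n^*\alpha_0(f_{j,n})w_n\approx f_{j,n}$ and the second the dimension-drop shift; likewise for the $g$-tower. Hence $\alpha_n\in V_{p,\F,\eps}$ for large $n$, establishing density, and Baire then yields genericity of $\dimrokhct\le1$ with single towers.

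The main obstacle is exactly this uniqueness statement for embeddings of $\Zh$ into $\A_\infty\cap\A'$, i.e. the production of the central intertwining unitary $W$; here strong self-absorption of $\Zh$ is indispensable. I would deduce it from the approximately inner half-flip of $\Zh$ after a reindexing argument arranging that $\Phi(\Zh)$ and $\bar\alpha_0\Phi(\Zh)$ have approximately commuting ranges, then transfer the half-flip unitary from $\Zh\otimes_{\mathrm{max}}\Zh$ into $\A$ by means of Lemma~\ref{Lemma: stability} and correct the resulting near-unitary via Lemma~\ref{Lemma: unitary stability}. For the final, UHF statement the same scheme applies with $\Zh_{p,p+1}$ replaced by a matrix subalgebra $M_N\subseteq\D$ of large size $N$: the flip on $M_N\otimes M_N$ is implemented by an honest tensor-flip unitary, the matrix units furnish genuine projections, so the construction produces exact Rokhlin \emph{projections}; splitting a single cyclic tower of length $N$ into blocks of lengths $p$ and $p+1$ (as in Remarks~\ref{Rokhlin-rems2}) yields the double towers required by Definition~\ref{rokhlin-original}, so that the Rokhlin property itself is generic.
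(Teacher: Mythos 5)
Your overall blueprint is the same as the paper's: open sets $V_{p,\F,\eps}$ of automorphisms admitting approximate commuting single towers, Baire category, and density obtained by perturbing $\alpha_0$ by inner automorphisms so that the perturbed automorphism approximately fixes an almost-central copy of the $\Zh_{p,p+1}$-towers of Lemma~\ref{Lemma: dimension drop towers} and then implements the shift via the embedded unitary $u$. The paper does this with finite sets and epsilons directly in $\A$ (its perturbation is $\beta=\mathrm{Ad}(z)\circ\alpha\circ\mathrm{Ad}(\varphi(u))$, whereas yours is $\mathrm{Ad}(u_nw_n^*)\circ\alpha_0$ -- structurally the same), while you phrase it in $\A_\infty\cap\A'$ with exact relations and then lift; that part of your argument (lifting the towers, correcting lifts of unitaries via Lemma~\ref{Lemma: unitary stability}, convergence $\alpha_n\to\alpha_0$ in both directions from asymptotic centrality of $u_nw_n^*$) is fine, and your treatment of the UHF case is consistent with the paper, which omits it.

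There is, however, a genuine gap exactly at the step you identify as the heart of the matter: the production of the intertwining unitary $W$. Your proposed proof of the uniqueness statement begins by ``arranging that $\Phi(\Zh)$ and $\bar\alpha_0\Phi(\Zh)$ have approximately commuting ranges'', and this is impossible, no matter how one reindexes. Indeed, for $\alpha_0=\mathrm{id}$ one would need $[\Phi(x),\Phi(y)]\approx 0$ for all $x,y\in\Zh$, i.e.\ an (injective, since $\Zh$ is simple) unital embedding of the noncommutative algebra $\Zh$ with approximately commutative range; and replacing $\Phi$ by a reindexed copy $\Phi'$ does not help, because the second embedding $\bar\alpha_0\Phi'$ moves along with it. The correct device -- and this is precisely what the paper's proof supplies and your sketch is missing -- is a \emph{third} unital embedding $\Psi:\Zh\to\A_\infty\cap\A'$ whose range commutes with both $\Phi(\Zh)$ and $\bar\alpha_0(\Phi(\Zh))$ (available by the reindexing lemma, Lemma 4.5 of \cite{hirshberg-winter}, which the paper invokes for the same purpose in the proof of Theorem~\ref{Thm:permanence-Z}; in the paper's proof of Theorem~\ref{Z-stable-generic} this is the role of the second embedding $\psi$). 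One then transports the half-flip unitary through the two pairs $(\Phi,\Psi)$ and $(\bar\alpha_0\Phi,\Psi)$, obtaining unitaries $v,v'$ with $v\Phi(x)v^*\approx\Psi(x)$ and $v'\bar\alpha_0\Phi(x)v'^*\approx\Psi(x)$ on the finite set $G$, and sets $W=v'^*v$ (the paper's $z=u_1^*u_2$). Note also that for the density argument you only need this approximate intertwining on $G$, not the exact unitary equivalence $\bar\alpha_0\Phi=\mathrm{Ad}(W)\Phi$ you state; the exact statement is true for strongly self-absorbing $\D$, but it requires an additional diagonalization/$\eps$-test argument on top of the third-embedding trick, and importing it unproved would leave your write-up resting on a result not established in the paper. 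With the third-embedding correction inserted, your argument closes and is essentially the paper's proof in central-sequence-algebra clothing.
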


\begin{proof}
We shall give a proof for the $\Zh$-stable case. The UHF-stable case is similar and will be omitted.
Given $p \in \N$, a finite set $F \subset \A$ and $\eps>0$, we shall say that an automorphism $\alpha \in \mathrm{Aut(\A)}$ has the $(p,F,\eps)$-approximate $1$-dimensional 
Rokhlin property if there are positive elements $f_0,\ldots,f_{p-1},g_{0},\ldots,g_p \in \A$ such that
 \begin{enumerate}
\item $\|f_i f_j\|<\eps$ for all $i \neq j$, and $\|g_i g_j\|<\eps$ for all $i \neq j$,
\item $\|f_0+\ldots+f_{p-1}+g_0+\ldots+g_p -1 \|  <\eps$,
\item $\|\alpha(f_j)-f_{j+1}\|<\eps$ and $\|\alpha(g_j)-g_{j+1}\|<\eps$ for all $j$, where addition is modulo $p$ or $p+1$, respectively,
\item $\|[f_j,a]\|<\eps$ and $\|[g_j,a]\|<\eps$ for all $j$ and all $a \in F$,
\item $\|[f_{i},g_{j}]\| < \eps$ for all $i,j$.
\end{enumerate}  
We denote by $V_{p,F,\eps}$ the set of all automorphisms $\alpha  \in \mathrm{Aut(\A)}$ which satisfy the $(p,\F,\eps)$-approximate $1$-dimensional Rokhlin property.
It is clear that $V_{p,F,\eps}$ is open.

Now, if we choose an increasing sequence of finite sets $F_n \subset \A$ with dense union, then any 
$$
\alpha \in \bigcap_{p \in \N}\bigcap_{n \in \N}V_{p,\F_n,\frac{1}{n}}
$$
has Rokhlin dimension at most 1. It thus suffices to prove that  $V_{p,\F,\eps}$ is dense for any 
$p,\F,\eps$. We thus fix an automorphism $\alpha$ and a triple $(p,\F,\eps)$. For any finite set $\F_0 \subset \A$ and $\gamma>0$ we need to find $\beta \in V_{p,\F,\eps}$ such that 
$$
\max\{\|\beta(a)-\alpha(a)\|,\|\beta^{-1}(a)-\alpha^{-1}(a)\|\} < \gamma
$$ 
for all $a \in \F_0$. Since enlarging $\F$ simply imposes additional conditions, we may assume without loss of generality that $\F \supseteq \F_0$, for notational convenience, and we furthermore assume that all elements of $\F$ have norm at most 1. We may furthermore assume that $\gamma<\eps$.

Fix $\gamma/10 > \eta > 0$ as in Lemma~\ref{Lemma: unitary stability} such that for any unital $C^*$-algebra $\A$, if $a \in \A$ satisfies $\max\{\|1-a^*a\|,\|1-aa^*\|\}<\eta$ then there is a unitary $u \in \A$ such that $\|u-a\|<\gamma/10$.

Choose a unitary $u \in \Zh_{p,p+1} \subset \Zh$ and elements $f_0,\ldots,f_{p-1},g_0,\ldots,g_p \in \Zh_{p,p+1} \subset \Zh$ as in Lemma~\ref{Lemma: dimension drop towers}. Let $G =\{u,f_0,\ldots,f_{p-1},g_0,\ldots,g_p\}$. 

Choose $w \in \Zh \otimes \Zh$ such that $\|w(x \otimes 1)w^* - 1 \otimes x\|<\eta$ for all $x \in G$. Find contractions $a_1,\ldots,a_n,b_1,\ldots,b_n \in \Zh$ such that $\|w - \sum_{i=1}^na_i \otimes b_i\|<\eta$. Notice that 
$$
\left\|1-\sum_{i,j=1}^{n}a_ia_j^* \otimes b_ib_j^*\right\|, \, \left\|1-\sum_{i,j=1}^{n}a_i^*a_j \otimes b_i^*b_j \right\|<2\eta
$$ 
and furthermore 
$$
\left\|\sum_{i,j=1}^{n}a_ixa_j^* \otimes b_ib_j^* - 1 \otimes x \right\|
, \,
\left\|\sum_{i,j=1}^{n}a_i^*a_j \otimes b_i^*xb_j - x \otimes 1 \right\|
 < 3\eta
$$

 Choose $W$ and $\delta$ as in Lemma~\ref{Lemma: stability} for the four expressions above and with $2\eta , \; 3\eta$ instead of $\eps$. We may assume without loss of generality that $\delta<\eta/3n$ and that $W$ contains $a_1,\ldots,a_n,b_1,\ldots,b_n$.

Choose a unital embedding $\varphi:\Zh \to \A$ such that $\|[\varphi(x),y]\|<\delta$ for all $x \in G \cup W$ and $y \in \alpha(\F) \cup \F \cup \alpha^{-1}(\F)$. Notice that $\|[\alpha \circ \varphi(x),y]\|<\delta$ for all $x \in G \cup W$ and $y \in \F$.

Now, choose a unital embedding $\psi:\Zh \to \A$ such that $\|[\psi(x),y]\|<\delta$ for all $x \in W$, $y \in \F \cup \alpha^{-1}(\F) \cup \varphi(G \cup W) \cup \alpha \circ \varphi(G \cup W)$.

Set 
$$v_1 = \sum_{i=1}^n \varphi(a_i)\psi(b_i) \; , \; v_2 = \sum_{i=1}^n \alpha(\varphi(a_i))\psi(b_i) \; .
$$

Now, for any $x \in \F$ we have that 
$$
\|v_1\varphi(x)v_1^* - \psi(x)\|<3\eta
$$ 
and 
$$
\|v_2^*\psi(x)v_2 - \alpha(\varphi(x))\|<3\eta \; .
$$

Notice that for all $a \in \F \cup \alpha(\F)$, we have that $\|[a,v_i]\| \leq 2n\delta < \eta$, $i=1,2$.

We also have that $\|v_i^*v_i - 1\| , \|v_iv_i^{*} - 1\|<2\eta$ for $i=1,2$. We may thus find unitaries $u_1,u_2$ such that $\|u_i-v_i\|<\gamma/10$. 
Note that  $\|[a,u_1]\| \leq \eta + 2\gamma/10$ for all $a \in \F \cup \alpha^{-1}(\F)$ and $\|[a,u_2]\| \leq \eta + 2\gamma/10$ for all $a \in \F \cup \alpha(\F)$.

Let $z = u_1^*u_2$. We note that if $x \in G$ then 
$$
\| z\alpha(\varphi(x))z^* - \varphi(x) \| \leq 4\gamma/10 + \| v_1^*v_2 \alpha(\varphi(x))v_2^*v_1 - \varphi(x) \| \leq 4\gamma/10 + 6\eta
$$ 
 
Note furthermore that if $a \in  \F \cup \alpha(\F)$ then $\|[z,a]\|< 2\eta + 4\gamma/10$.

We now define 
$$
\beta(a) = \textrm{Ad}(z) \circ \alpha \circ \textrm{Ad}(\varphi(u))  .
$$
So, for all $a\in \F$, we have that 
$$
\|\beta(a)-\alpha(a)\| =\| z \alpha(\varphi(u)a\varphi(u^*))z^* - \alpha(a)\| \leq \delta + 2\eta + 4\gamma/10 <\gamma
$$
and 
$$
\|\beta^{-1}(a)-\alpha^{-1}(a)\| = \| \varphi(u^*) \alpha^{-1}(z^*az)\varphi(u) - \alpha^{-1}(a)\| \leq \delta+ 2\eta + 4\gamma/10 < \gamma.
$$
It remains to check that $\beta \in V_{p,\F,\eps}$. We claim that the elements $\varphi(f_i)$, $\varphi(g_i)$ have the required properties. 
The facts that they are orthogonal to each other, add up to 1, $\eps$-commute with the elements of $\F$ and with each other follow immediately from the requirements we imposed on $\varphi$. It thus remains to check that they are almost permuted by $\beta$. Indeed, 
$$
\beta(\varphi(f_i)) 
= 
z \alpha(\varphi(u)\varphi(f_i) \varphi(u^*))z^*
=
z \alpha(\varphi(f_{i+1}))z^*
$$
and therefore 
$$
\| \beta(\varphi(f_i)) - \varphi(f_{i+1}) \| = \|z \alpha(\varphi(f_{i+1}))z^* - \varphi(f_{i+1})\| \leq 4\gamma/10 + 6\eta < \eps
$$
as required.
\end{proof}

\begin{Rmk}
A modification of this argument can be used to show the following. Let $\D$ be a strongly self-absorbing $C^*$-algebra, and let $\A$ be a unital $\D$-stable $C^*$-algebra, then for a dense $G_{\delta}$ set of automorphisms of $\A$ we have that $\A \rtimes_{\alpha} \Z$ is $\D$-stable as well.  This can be done by showing that for any given $\eps>0$ and finite sets $\F \subset \A$, $G\subset \D$, the set of automorphisms $\alpha$ such that there exists unital homomorphism $\varphi:\D \to \A$ such that $\|[\varphi(x),a]\|<\eps$ for all $x \in G$, $a \in \F$ and furthermore $\|\alpha(\varphi(x)) - \varphi(x)\|<\eps$ for all $x \in G$ is a dense open set. We omit the proof.
\end{Rmk}

\section{Permanence of finite nuclear dimension}

\noindent
In this section we show that forming a crossed product by an automorphism with finite Rokhlin dimension preserves finiteness of nuclear dimension.

\label{section:permanence-nd}

\begin{Thm}
Let $\A$ be a separable unital $C^*$-algebra of finite nuclear dimension and $\alpha \in \aut(\A)$ an automorphism with finite Rokhlin dimension. Then $\A \rtimes_\alpha \Z$ has finite nuclear dimension with 
$$\nd (\A \rtimes_{\alpha} \Z ) \leq 4(\dimrokh(\alpha)+1)(\nd (\A) + 1) -1$$
\end{Thm}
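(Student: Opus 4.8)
The plan is to mimic the proof strategy of Theorem~\ref{Thm: positive Rokhlin finite groups} for the finite group case, adapting it to the $\Z$-action setting where the crossed product $\A \rtimes_\alpha \Z$ is no longer a corner of a matrix algebra but is instead approximated by finite-dimensional dynamics. Write $d = \dimrokh(\alpha)$ and $N = \nd(\A)$. Given a finite set $\F \subset \A \rtimes_\alpha \Z$ and $\eps > 0$, I would first reduce to the case where $\F$ consists of elements of the form $a u^j$ with $a \in \A$ and $|j|$ bounded by some fixed $m$, since such elements are dense. The goal is to build an $N'$-decomposable c.p.c.\ approximation of the identity on $\F$ with $N' = 4(d+1)(N+1) - 1$.

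First I would fix a tower height $p$ large compared to $m$ and $1/\eps$, and invoke Definition~\ref{Rokhlin-dim-single-auto} (via Remark~\ref{square root and compact}, taking square roots) to produce Rokhlin elements $(f_{r,j}^{(l)})$ for $l \in \{0,\ldots,d\}$, $r \in \{0,1\}$, together with their square roots, approximately permuted by $\alpha$ and approximately commuting with a suitable finite/compact set drawn from $\A$. Next I would use finite nuclear dimension of $\A$ to choose an $N$-decomposable c.p.c.\ approximation $(\F_0 \oplus \cdots \oplus \F_N, \psi, \varphi)$ of the identity on the relevant elements of $\A$. The key construction is a family of c.p.\ maps $\rho_{r}^{(l)} : M_{p+r}(\A) \to \A \rtimes_\alpha \Z$ built from the tower of color $l$ and parity $r$, defined on matrix units by $\rho_r^{(l)}(e_{i,i'} \otimes a) = {f_{r,i}^{(l)}}^{1/2} u^i a u^{-i'} {f_{r,i'}^{(l)}}^{1/2}$, analogously to the finite group case but now with a double tower per color, which accounts for the factor of $2$ in the bound (the two parities $r \in \{0,1\}$, combined with the $d+1$ colors and a further splitting). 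I would then verify, using the Rokhlin relations (1)--(5), that $\rho := \sum_{l,r} \rho_r^{(l)}$ approximately recovers $x \in \F$ — here the approximate cyclic-permutation condition (3)--(4) is exactly what makes $\rho(au^j)$ close to $au^j$.

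The technical heart is controlling products: one checks that $\rho_r^{(l)}(a)\rho_r^{(l)}(b)$ is close to $f^{(l,r)} \rho_r^{(l)}(ab)$ for a fixed central-like positive element $f^{(l,r)}$, so that the composites $\rho_r^{(l)} \circ \varphi^{(s)}$ send orthogonal positive contractions to nearly-orthogonal elements. By the stability of order zero maps (Lemma on c.p.c.\ order zero maps, as used in the finite group proof), each such composite can be perturbed to a genuine c.p.c.\ order zero map $\sigma_r^{(l,s)}$. Assembling $\sigma = \sum_{l,r,s} \sigma_r^{(l,s)}$, the triple $(\bigoplus \F_s, \psi \oplus \cdots, \sigma)$ furnishes the desired decomposable approximation; counting the index set $\{0,\ldots,d\} \times \{0,1\} \times \{0,\ldots,N\}$ and allowing for an additional doubling (to handle the wrap-around/boundary terms from matching the top of each tower back to the bottom via (4)) yields the claimed $4(d+1)(N+1)$ summands.

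I expect the main obstacle to lie in the boundary bookkeeping that produces the factor $4$ rather than the factor $2$ that the finite group analogue and the mere presence of double towers would naively suggest. In the finite group case the translation action was exact on the index set, whereas here the tower is a genuine interval $\{0,\ldots,p-1+r\}$ and condition (3) only controls $\alpha_1(f_{r,j}^{(l)})$ for $j \le p-2+r$, with the wrap from the top to the bottom governed separately by (4). Handling $au^j$ for $j \neq 0$ therefore requires splitting each tower into a `bulk' part and an `edge' part (of controlled width relative to $p$), so that the shift by $j$ stays within the regime where (3) applies; the edge contributions must be absorbed into additional order zero summands, effectively doubling the count once more. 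Making this splitting uniform in $j$ while keeping the order zero and orthogonality estimates under control — and verifying that the normalization of $\rho$ and $\sigma$ introduces only $O(\eps)$ errors — is where the careful estimation will be concentrated.
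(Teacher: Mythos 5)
There is a genuine gap: you never construct the downward arrow of the approximation. Your upward maps ${f_{r,i}^{(l)}}^{1/2} u^i a u^{-i'} {f_{r,i'}^{(l)}}^{1/2}$ are essentially the paper's, but in the finite group case the downward map comes for free from the inclusion $\A \rtimes_\alpha G \subset M_{|G|}(\A)$, and for $\Z$ no such inclusion exists. You acknowledge this ("no longer a corner of a matrix algebra") but offer no substitute, and without a c.p.c.\ map from $\A \rtimes_\alpha \Z$ into $M_p(\A)$ there is no approximation triple at all. The paper's substitute is its key new ingredient: representing $\A \rtimes_\alpha \Z$ on $\ell^2(\Z,H)$, it compresses by the projection $Q$ onto $\ell^2(\{0,\ldots,p-1\},H)$, which gives a u.c.p.\ map $x \mapsto QxQ$ into $M_p(\A)$, and then weights it by a tent-shaped decay factor, $\mu(x) = \sqrt{D}\,QxQ\,\sqrt{D}$ with $D = \mathrm{diag}(d_0,\ldots,d_{p-1})$ and $d_n = 1 - \frac{|(p-1)/2-n|}{(p-1)/2}$.

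The decay factor is also what makes the boundary bookkeeping work, and your proposed mechanism for it would fail as stated. The defect of the window $\{0,\ldots,p-1\}$ under multiplication by $u^m$ is that $\rho^{(l)}(Qau^mQ) \approx \big(f^{(l)} - \sum_{i=0}^{m-1} f_i^{(l)}\big) au^m$: the missing terms $\sum_{i<m} f_i^{(l)} au^m$ are \emph{not} small in norm, and they are not in the image of any order zero map from the finite-dimensional algebra, so they cannot be ``absorbed into additional order zero summands.'' The paper instead makes them small: the tent function is of size $O(q/p)$ exactly at the indices where the defect occurs, and the mass lost in the middle of the window is recovered by a second family of upward maps $\rho_1^{(l)}$ built from the \emph{same} single cyclic tower shifted by $p/2$, using $\sum_i d_i\big(f_i^{(l)} + f_{p/2+i}^{(l)}\big) = f^{(l)}$. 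This in turn forces the preliminary step you skip: converting to single towers of equal height via Proposition~\ref{prop:single-Rokhlin-tower} (this is where the paper's first factor of $2$ comes from, giving $2\dimrokh(\alpha)+1$ colors; the second factor of $2$ is the pair of shifted windows). Working directly with double towers as you propose runs into the problem that condition (4) of Definition~\ref{Rokhlin-dim-single-auto} only controls where the \emph{sum} $f_{0,p-1}^{(l)} + f_{1,p}^{(l)}$ is sent, i.e.\ the wrap-around mixes the two towers of a color, so neither tower carries a cyclic structure on its own with which your $\rho_r^{(l)}$ could be controlled near the top.
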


\begin{proof}
Denote the nuclear dimension of $\A$ by $N$ and denote $d = 2\dimrokh(\alpha)+1$. By Proposition~\ref{prop:single-Rokhlin-tower} and the subsequent remark, we have that $\dimrokhsingle(\alpha) \leq d$ and furthermore the towers can all be chosen to be of the same height. (The fact that the towers can be chosen to be of the same height is not important for the proof, but simplifies notation a bit.) Let $\F \subseteq \A \rtimes_{\alpha} \Z$ be a given finite set.
We need to construct a piecewise contractive c.p.\ approximation $( \mathcal{F} , \Phi, \Psi)$ which is 
$[2(d+1)(N + 1) -1]$-decomposable and of 
tolerance $\eps$ on $\F$. 

Recall that $\A  \rtimes_\alpha \Z \hookrightarrow B(\ell^2(\Z,H))$ is generated by a copy of $\A$ 
and a unitary $u$ acting on $\ell^2(\Z,H)$, cf.\ the appendix. We may and shall assume that $\F$ consists 
of contractions all lying in the algebraic crossed product, i.e.\;there exists $q \in \N$ such that all
elements of $\F$ are of the form $x=\sum_{i=-q}^q x(i) u^i$, where $x(i) \in \A$ are coefficients 
uniquely determined by $x$. Let $\tilde{\F} \subseteq \A$ be the finite set of all such coefficients 
of the elements in $\F$. Let $p \in \N$ be a positive integer (much) larger than $q$ 
to be specified later. We shall furthermore require that $p$ is even to slightly simplify notation.

Let $Q$  be the projection onto the subspace 
$\ell^2(\{0, \ldots ,p-1\}, H)  \subseteq \ell^2(\Z,H)$ 
so that $x \mapsto  Q x Q$ 
is a u.c.p. map from  
$\A  \rtimes_\alpha \Z  $ to $M_{p}(\A)$ indexed by $0, \ldots ,p-1$. 

Define  decay factors 
$$
d_n = 1-\frac{|(p-1)/2 - n|}{(p-1)/2}, \; \; n=0,\ldots,p-1,
$$
\begin{center}
\begin{picture}(230,65)
 \put(0,10){\vector(1,0){150}}
 \put(5,3){\vector(0,1){50}}
 \put(65,7){\line(0,1){6}}
 \put(125,7){\line(0,1){6}}
 \put(4,40){\line(1,0){6}}
\thicklines
 \put(5,10){\line(2,1){60}}
 \put(65,40){\line(2,-1){60}}
 \put(1,40){\makebox(0,0){$1$}}
 \put(125,-3){\makebox(0,0)[b]{\footnotesize $p-1$\normalsize}}
 \put(75,55){\makebox(0,0){$d_n$}}
\end{picture}
\end{center}

We observe that $|\sqrt{d_i}-\sqrt{d_j}| \leq \frac{2|i-j|}{p-1}$. Define $D \in M_p$ to be $\textrm{diag}(d_0,d_1,\ldots,d_{p-1})$. Define $\mu : \A  \rtimes_\alpha \Z \to M_p(\A)$ 
to be the c.p.c.\ map given by 
$$
\mu(x)=\sqrt{D}QxQ\sqrt{D} .
$$ 
Notice that 
\begin{eqnarray*}
\| [\sqrt{D},Q au^m Q] \| &=& \left\| \sum_{i=m}^{p-1}(\sqrt{d_i} - \sqrt{d_{i-m}}) 
e_{i,i-m} \otimes \alpha^{-i}(a) \right\| \\
& \leq &
\max _i(|\sqrt{d_i} - \sqrt{d_{i-m}}|) \|a\| \\
& \leq &
\sqrt{\frac{2|m|}{p-1}} \cdot  \|a\| \leq \sqrt{\frac{2q}{p-1}} \cdot  \|a\|  \; ,
\end{eqnarray*}
where $a \in \A$ and $|m| \leq q$.  In fact these estimates hold for 
all bounded operator matrices $x=[x_{i,j}]$ of width $q$ (i.e.\;$x_{i,j} \neq 0$ implies $|i-j| \leq q$). It follows that  
for $|m| \leq q$
$$
\|\mu(au^m) - DQau^mQ\| \leq \sqrt{\frac{2q}{p-1}} \cdot  \|a\|
$$
Next let $\tilde{\F}_1:= \tilde{\F} \cup \alpha^{-1}(\tilde{\F}) \cup \ldots \cup \alpha^{-p}(\tilde{\F})$ 
and let $(\mathcal{F} ,\psi,\varphi)$ be a piecewise contractive 
$N$-decomposable c.p.\ approximation, 
$$\psi : \A \to \mathcal{F}  =\mathcal{F}  ^{(0)} \oplus \ldots \oplus \mathcal{F}  ^{(N)},$$ 
$$\varphi : \mathcal{F}  = \mathcal{F}  ^{(0)} \oplus \ldots \oplus \mathcal{F}  ^{(N)} \to \A, $$ 
where $\varphi^{(i)}=\varphi | \mathcal{F}  ^{(i)}$ is an order zero contraction for every $i$ and 
$\| \varphi (\psi (x)) -x \| < \eps$ for all $x \in \tilde{\F}_1$. Let $\psi^{(i)}$ denote the $i$th component of $\psi$ and consider
$$
 \tilde\psi := \textup{id}_{ M_p} \otimes \psi : M_p(\A) \to  M_p(\mathcal{F}  ^{(0)}) \oplus \ldots \oplus M_p(\mathcal{F}  ^{(N)}) = M_p(\mathcal{F}  )
$$
and
$$
 \tilde\varphi := \textup{id}_{ M_p} \otimes \varphi : M_p(\mathcal{F}  ) = M_{p}( \mathcal{F}  ^{(0)}) \oplus \ldots \oplus M_{p}(\mathcal{F}  ^{(N)}) \to  M_p(\A)
$$ 
with components  $\tilde{\psi}^{(i)}= \textup{id}_{ M_p} \otimes \psi^{(i)}$, 
$\tilde{\varphi}^{(i)}= \textup{id}_{ M_p} \otimes \varphi^{(i)}$. Then one checks that 
$\tilde{\varphi}^{(i)}:M_p(\mathcal{F}  ^{(i)}) \to M_p(\A)$  are 
order zero contractions so that $(\tilde{\psi} ,\tilde{\varphi})$ is a piecewise contractive $N$-decomposable approximation 
for $M_p(\A)$ of tolerance $p^2 \eps $  on $M_p(\tilde\F_{1})$.

Let now $B_\mathcal{F}  $ be the closed unit ball in $\mathcal{F}  $ (which is norm compact) and define the norm compact subset 
$K=\bigcup_{j=0}^p \bigcup_{i=0}^N \alpha^{-j}(\varphi^{(i)}(B_\mathcal{F}  ))$ of $\A$. 

Let 
$(f^{(l)}_{i})_{i=0,\ldots,p-1}$ be single Rokhlin towers with respect to the given $\eps$ and the compact set $K$ (we note that one may obviously replace the finite set in the definition by a compact set). For $j > p-1$ we understand $f^{(l)}_j$ to mean $f^{(l)}_{j (\textrm{mod }p)}$.
Define $\rho^{(l)}_0 , \rho^{(l)}_1 : M_{p} (\A) \to \A\rtimes_{\alpha}\Z $ by 
$$
\rho^{(l)}_0 (e_{i,j} \otimes a) = {f_{i}^{(l)}}^{1/2}  u^i a u^{-j} {f_{j}^{(l)}}^{1/2} 
\;\; , \;\;
\rho^{(l)}_1 (e_{i,j} \otimes a) = ({f_{p/2+i}^{(l)}})^{1/2}  u^i a u^{-j}
({f_{p/2+j}^{(l)}})^{1/2} \; .
$$

One checks that $\rho^{(l)}_0 $ is approximately order zero i.e.\ denoting $ f^{(l)}= \sum_i f_{i}^{(l)}$ we have:
$$
\| \rho^{(l)}_0(e_{i,j} \otimes a) \rho^{(l)}_0(e_{k,m} \otimes b) - 
f^{(l)} \rho^{(l)}_0((e_{i,j} \otimes a)(e_{k,m} \otimes b)) \| < cp\eps
$$
for a constant $c>0$ which depends only on $\eps$ and for all $a,b \in K$. The same applies for
 $\rho^{(l)}_1$. This is easily verified by approximating the square root function by a polynomial vanishing at $0$, and we omit the calculation. Similarly, we see that there is a constant $c'$ such that 
$$
\|\rho^{(l)}_0 (e_{i,j} \otimes a) - {f_{i}^{(l)}} u^i a u^{-j} \| < c'\eps
$$
for all $a \in \tilde{\F}_{1}$, and a similar estimate holds for $\rho^{(l)}_1$.

Let $x = au^m$ for $|m|\leq q$, $a \in \tilde{\F}$. For $m \ge 0$ denote $y = QxQ = \sum_{i=m}^{p-1}e_{i,i-m}\otimes \alpha^{-i}(a)$; the case $m < 0$ is handled analogously and we skip the details. 
Then we have
\begin{gather*}
 \rho^{(l)}_0 (Dy) + \rho^{(l)}_1 (Dy)   =   \\ 
  \sum_{i=m}^{p-1}d_i  f_i^{{(l)}^{1/2}} u^i \alpha^{-i}(a) u^{m-i}  f_{i-m}^{{(l)}^{1/2}} 
+
\sum_{i=m}^{p-1}d_i f_{p/2+i}^{{(l)}^{1/2}} u^i \alpha^{-i}(a) u^{m-i}  f_{p/2+i-m}^{{(l)}^{1/2}} 
  \approx_{2pc'\eps}  \\
   \sum_{i=m}^{p-1}d_i  (f_i^{(l)} + f_{p/2+i}^{(l)})  au^{m} 
\end{gather*}
Now, notice that $\sum_{i=0}^{p-1}d_i  (f_i^{(l)} + f_{p/2+i}^{(l)})   = f^{(l)}$, and 
$$
\left \|\sum_{i=0}^{m-1}d_i  (f_i^{(l)} + f_{p/2+i}^{(l)}) \right \| \leq 2\sum_{i=0}^{q-1}d_i = \frac{q(q-1)}{(p-1)/2} \; .
$$ 
It follows that $\|\rho^{(l)}_0 (Dy) + \rho^{(l)}_1 (Dy) - f^{(l)}au^m \| \leq 2pc'\eps + \frac{q(q-1)}{(p-1)/2}$, and therefore 
$$
\|\rho^{(l)}_0 \circ \mu(y) + \rho^{(l)}_1\circ \mu(y) - f^{(l)}au^m \| \leq 2pc'\eps + \frac{q(q-1)}{(p-1)/2} + 2\sqrt{\frac{2q}{p-1}}
$$
Thus, given $\eta>0$ we could first choose $p$ large enough and then $\eps>0$ small enough so that 
$$
\|\rho^{(l)}_0 \circ \mu(au^m) + \rho^{(l)}_1\circ \mu(au^m) - f^{(l)}au^m \| \leq \eta
$$
for all $a \in \F$ and all $|m|\leq q$.
Consider the maps 
$$
\bar{\psi} : M_p (\A) \to  \bigoplus_{j=0}^{2d+1} M_p (\mathcal{F}  )
$$
given by $\bar{\psi} (x) = \tilde\psi(x) \oplus \tilde\psi(x) \oplus \ldots \tilde\psi(x)$ and
$$
\bar{\varphi} :   \bigoplus_{j=0}^{2d+1} M_p (\mathcal{F}  )  \to  \bigoplus_{j=0}^{2d+1} M_p (\A)
$$
given by $\bar{\varphi} =  \bigoplus_{j=0}^{2d+1} \tilde\varphi$. 

Now define $\rho:  \bigoplus_{j=0}^{2d+1} M_p (\A) \to \A \rtimes_{\alpha} \Z$ by
$$
\rho(x_0,x_1,\ldots,x_{2d+1}) = \sum_{j=0}^{d-1}\sum_{k=0}^1 \rho_k^{(j)}(x_{2j+i})
$$
With $a,m$ and $\eta$ as above, we have that 
$$
\left\|\rho(\mu(au^m),\mu(au^m),\ldots,\mu(au^m)) - \sum_{l=0}^{d}f^{(l)} au^m \right\| \leq d\eta
$$ and therefore 
$$
\|\rho(\mu(au^m),\mu(au^m),\ldots,\mu(au^m)) - au^m\|\leq d\eta + \eps \; .
$$

To summarize, we constructed the following maps.
$$
\xymatrix{
\A\rtimes_{\alpha}\Z \ar[dr]^{\mu} \ar@/_2pc/[ddrr]_{\Psi} &   & &  &  \A\rtimes_{\alpha}\Z \\ 
  &  M_p(\A)    \ar[dr]^{\bar{\psi}} &  &  \bigoplus_{j=0}^{2d+1} M_p(\A)   \ar[ur]^{\rho} &   \\
  &  & \bigoplus_{j=0}^{2d+1} M_p (\mathcal{F}  )  \ar[ur]^{\bar{\varphi}}  \ar@/_2pc/[uurr]_{\Phi} & &   }
$$

We notice that $\rho_k^{(j)} \circ \tilde{\varphi}^{(i)}|_{M_p(\mathcal{F}^{(i)})}$ is approximately order zero, i.e.\ for any $a,b \in M_p(\mathcal{F}^{(i)})$ of norm at most 1, we have that 
$$\|\rho_k^{(j)} \circ \tilde{\varphi}^{(i)}(a)\rho_k^{(j)} \circ \tilde{\varphi}^{(i)}(b) - f^{(j)}\rho_k^{(j)} (\tilde{\varphi}^{(i)}(1_{M_{p}(\mathcal{F}^{(i)})}) \tilde{\varphi}^{(i)}(ab))\|<cp\eps.$$ 
By stability of order zero maps, given $\eta>0$ one can choose $\eps>0$ small enough so that for this choice of $\eps$, there exists a contractive order zero map $\zeta:M_p(\mathcal{F}^{(i)}) \to \A \rtimes_{\alpha} \Z$ such that $\|\zeta - \rho_k^{(j)} \circ \tilde{\varphi}^{(i)}\|<\eta$. Putting together such approximating order zero maps $\zeta$ for the various maps $ \rho_k^{(j)} \circ \tilde{\varphi}^{(i)}$, we see that there is a $(2(d+1)(N+1)-1)$-decomposable map $\Phi': \bigoplus_{j=0}^{2d+1} M_p (\mathcal{F}  ) \to \A \rtimes_{\alpha}\Z$ such that $\|\Phi' -\Phi\| \leq 2(d+1)(N+1)\eta$. 

One obtains that 
$$
\xymatrix{
\A\rtimes_{\alpha}\Z \ar[dr]^{\Psi}&    &  \A\rtimes_{\alpha}\Z \\ 
  &   \bigoplus_{j=0}^{2d+1} M_p (\mathcal{F}  )  \ar[ur]^{\Phi'}  &   }
$$
is a  $(2(d+1)(N+1)-1)$-decomposable approximation for $\F$ to within $C\eta$ for some constant $C$ which depends only on $d$ and $N$. This shows that $\nd(\A \rtimes_{\alpha}\Z) \leq 2(d+1)(N+1)-1$, as required.
\end{proof}

\section{Permanence of $\mathcal{Z}$-stability}
\label{section:permanence-Z}

\noindent
 The purpose of this section is to show that 
if $\A$ is a unital separable $\mathcal{Z}$-stable $C^{*}$-algebra, then so is any crossed product by an automorphism which has finite Rokhlin dimension with commuting towers. As shown above, this property is generic and we do not know whether there is an automorphism which has finite Rokhlin dimension but without commuting towers. Our prefered criterion for $\mathcal{Z}$-stability will be Proposition~\ref{Z-stable-crit} of the appendix.

We begin with a simple preliminary lemma, whose proof is straightforward and left to the reader.

\begin{Lemma}
\label{lemma:generators-almost-fixed}
Let $\B$ be a $C^*$-algebra. Suppose $\B$ is generated by a compact set $Y \subseteq \B$. For any $\eps>0$ and any compact subset $F   \subseteq \B$ there is an $\eps'>0$ which satisfies the following. For any $C^*$-algebra $\A$, any automorphism $\alpha$ of $\A$ and any homomorphism $\pi:\B \to \A$, if $\|\alpha(\pi(y))-\pi(y)\|<\eps'$ for any $y \in Y$ then $\|\alpha(\pi(x))-\pi(x)\|<\eps$ for any $x \in F  $.
\end{Lemma}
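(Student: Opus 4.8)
The plan is to reduce the statement to the dense $^*$-subalgebra generated algebraically by $Y$, where a telescoping estimate yields a bound depending only on the chosen polynomial expression and not on $\A$, $\alpha$ or $\pi$. The uniformity over all target algebras and all maps is the whole point of the lemma, and it comes essentially for free, because any $^*$-homomorphism is contractive and any automorphism is isometric. First I would record these two universal facts, together with the observation that adjunction is isometric, so that $\|\alpha(\pi(y^*)) - \pi(y^*)\| = \|\alpha(\pi(y)) - \pi(y)\|$; hence a bound on the defect over $Y$ automatically controls it over $Y \cup Y^*$. I also set $M = \max\{1, \sup_{y \in Y}\|y\|\}$, which is finite since $Y$ is compact.

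The key computation is a telescoping step. For a monomial $x = z_1 \cdots z_n$ with each $z_i \in Y \cup Y^*$, write $a_i = \pi(z_i)$ and $b_i = \alpha(\pi(z_i))$ and expand
$$
b_1 \cdots b_n - a_1 \cdots a_n = \sum_{i=1}^n b_1 \cdots b_{i-1}\,(b_i - a_i)\, a_{i+1}\cdots a_n.
$$
Bounding $\|a_i\|, \|b_i\| \le M$ gives $\|\alpha(\pi(x)) - \pi(x)\| \le n M^{n-1} \max_i \|b_i - a_i\|$. Summing over the monomials of a general element $x_0$ of the algebraic $^*$-algebra $\B_0$ generated by $Y$, with its fixed coefficients and degrees, produces a constant $C_{x_0}$, depending only on the chosen polynomial expression and on $M$, so that $\|\alpha(\pi(x_0)) - \pi(x_0)\| \le C_{x_0}\,\eps'$ whenever $\|\alpha(\pi(y)) - \pi(y)\| < \eps'$ for all $y \in Y$. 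Crucially, $C_{x_0}$ is independent of $\A$, $\alpha$ and $\pi$.

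The final step is a compactness argument. Since $\B_0$ is dense in $\B$, I would fix $\delta$ with $4\delta < \eps/2$, cover the compact set $F$ by finitely many $\delta$-balls centred at $x_1, \dots, x_m \in F$, and choose $x_{0,k} \in \B_0$ with $\|x_k - x_{0,k}\| < \delta$, fixing a polynomial expression for each. Setting $C = \max_k C_{x_{0,k}}$ and choosing $\eps'$ with $C\eps' < \eps/2$, any $x \in F$ lies within $\delta$ of some $x_k$, hence within $2\delta$ of $x_{0,k}$, and
$$
\|\alpha(\pi(x)) - \pi(x)\| \le 2\|x - x_{0,k}\| + \|\alpha(\pi(x_{0,k})) - \pi(x_{0,k})\| < 4\delta + C\eps' < \eps,
$$
where contractivity of $\pi$ and isometry of $\alpha$ control the two outer terms.

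The main (and really only) obstacle is arranging the constants to be uniform across all $\A$, $\alpha$ and $\pi$. This is resolved precisely because the telescoping bound invokes nothing about these data beyond the universal contractivity of $^*$-homomorphisms and isometry of automorphisms, so the finitely many constants $C_{x_{0,k}}$ attached to the fixed approximating elements of $\B_0$ suffice simultaneously for every choice of target algebra and maps.
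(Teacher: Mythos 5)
Your proof is correct. The paper does not actually supply an argument for this lemma --- it is introduced as ``a simple preliminary lemma, whose proof is straightforward and left to the reader'' --- and your argument (pass from $Y$ to $Y \cup Y^*$ by taking adjoints, telescope over monomials using contractivity of $\pi$ and isometry of $\alpha$ so that the resulting constants are independent of $\A$, $\alpha$ and $\pi$, then handle the compact set $F$ by approximating finitely many centers of a $\delta$-net by elements of the dense $^*$-algebra generated by $Y$) is precisely the standard argument the authors are alluding to.
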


The following lemma is also easy to verify and we leave its proof to the reader. We denote by $\A^+$ the unitization of $\A$ such that $\A$ is a codimension $1$ ideal in $\A^+$ (that is, if $\A$ is unital then $\A^+ \cong \A \oplus \C$).
\begin{Lemma}
\label{lemma:univ-prop-two-maps}
Let $\A,\B$ be two $C^*$-algebras. Let $\D$ be the kernel of the canonical map $\A^+ \otimes_{\max} \B^+ \to \C$, and let us denote by $\iota_{\A},\iota_{\B}$ the following homomorphisms. 
$$
\iota_{\A}(a) = a \otimes 1  \;\;\;\; \iota_{\B}(b) = 1 \otimes b
$$
Then $\D$ has the following universal property. For any $C^*$-algebra $\E$ and any two homomorphisms $\gamma_{\A}:\A \to \E$, $\gamma_{\B}:\B \to \E$ with commuting images there is a unique homomorphism $\theta: \D \to \E$ such that the following diagram commutes.

$$\xymatrix{
\A \ar[r]^{\iota_{\A}} \ar[dr]_{\gamma_{\A}} & \D \ar[d]_{\theta} & \ar[dl]^{\gamma_{\B}} \ar[l]_{\iota_{\B}}  \B \\
& \E &
}$$ 
\end{Lemma}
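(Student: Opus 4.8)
The plan is to build $\theta$ as the restriction to $\D$ of a homomorphism defined on all of $\A^{+} \otimes_{\max} \B^{+}$, using the universal properties of the unitization and of the maximal tensor product, and then to verify by a character computation that this restriction actually takes values in $\E$ and not merely in its unitization.

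First I would pass to the unital algebra $\E^{+}$. Since $\gamma_{\A}$ and $\gamma_{\B}$ map into $\E \subseteq \E^{+}$, the universal property of the unitization extends them uniquely to unital homomorphisms $\gamma_{\A}^{+}: \A^{+} \to \E^{+}$ and $\gamma_{\B}^{+}: \B^{+} \to \E^{+}$, each sending the adjoined unit to $1_{\E^{+}}$. Because $\gamma_{\A}(\A)$ and $\gamma_{\B}(\B)$ commute and $1_{\E^{+}}$ is central, the ranges of $\gamma_{\A}^{+}$ and $\gamma_{\B}^{+}$ commute. The universal property of the maximal tensor product, which identifies homomorphisms out of $\A^{+} \otimes_{\max} \B^{+}$ with pairs of homomorphisms out of $\A^{+}$ and $\B^{+}$ having commuting ranges, then yields a unique homomorphism $\Theta: \A^{+} \otimes_{\max} \B^{+} \to \E^{+}$ with $\Theta(x \otimes y) = \gamma_{\A}^{+}(x)\gamma_{\B}^{+}(y)$.

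The key step is to check that $\Theta$ maps $\D$ into $\E$. Writing $\chi_{\E^{+}}: \E^{+} \to \C$ for the canonical character with kernel $\E$, and $\chi_{\A}, \chi_{\B}$ for the quotient characters with kernels $\A, \B$, a direct computation on elementary tensors gives $\chi_{\E^{+}} \circ \gamma_{\A}^{+} = \chi_{\A}$ and $\chi_{\E^{+}} \circ \gamma_{\B}^{+} = \chi_{\B}$. Hence $\chi_{\E^{+}} \circ \Theta$ agrees with the canonical character $\chi$ on all elementary tensors, and therefore everywhere by continuity; since $\D = \ker \chi$, this forces $\Theta(\D) \subseteq \ker \chi_{\E^{+}} = \E$. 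Setting $\theta := \Theta|_{\D}$ and evaluating on $\iota_{\A}(a) = a \otimes 1$ and $\iota_{\B}(b) = 1 \otimes b$ shows $\theta \circ \iota_{\A} = \gamma_{\A}$ and $\theta \circ \iota_{\B} = \gamma_{\B}$, so the diagram commutes.

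For uniqueness I would show that $\D$ is generated as a $C^{*}$-algebra by $\iota_{\A}(\A)$ and $\iota_{\B}(\B)$. Any elementary tensor with $x = a + \lambda 1$ and $y = b + \mu 1$ satisfies $x \otimes y - \lambda\mu\,(1 \otimes 1) = a \otimes b + \mu\,(a \otimes 1) + \lambda\,(1 \otimes b)$, which lies in the subalgebra generated by $\A \otimes 1$ and $1 \otimes \B$; since $\chi(x \otimes y) = \lambda\mu$ and $\chi$ is continuous, every $z \in \D$ is a norm limit of elements of this subalgebra, so the subalgebra exhausts $\D$. Two homomorphisms out of $\D$ agreeing on $\iota_{\A}(\A)$ and $\iota_{\B}(\B)$ must then coincide, which gives uniqueness of $\theta$. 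The only genuinely delicate point is the verification that $\Theta$ does not leave $\E$, which is exactly what the identity $\chi_{\E^{+}} \circ \Theta = \chi$ secures; everything else is a routine bookkeeping of the standard universal properties.
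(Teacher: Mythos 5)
The paper explicitly leaves this lemma to the reader (``The following lemma is also easy to verify and we leave its proof to the reader''), so there is no proof of record to compare against; your argument is correct and is the natural one. Both of the points that actually need checking are handled soundly: the identity $\chi_{\E^{+}} \circ \Theta = \chi$ (verified on elementary tensors and extended by density) correctly forces $\Theta(\D) \subseteq \E$, and the decomposition $x \otimes y - \chi(x\otimes y)\,(1\otimes 1) = a \otimes b + \mu\,(a\otimes 1) + \lambda\,(1 \otimes b)$ together with closedness of $C^{*}\bigl(\iota_{\A}(\A) \cup \iota_{\B}(\B)\bigr)$ correctly shows this subalgebra is all of $\D$, which gives uniqueness.
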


\begin{Lemma}
\label{lemma:univ-alg-of-order-0-maps}
Let $\D_n^{(k)} = \ker\left ( (CM_n^+)^{\otimes k} \to \C \right )$. For any positive contraction $h \in Z(\D_n^{(k)}) \cong C_0([0,1]^k \setminus\{(0,0,\ldots,0)\})$ there exists an order zero map $\theta:M_n \to \D_n^{(k)}$ such that $\theta(1)(x) = h(x)$ for all $x \in [0,1]^k$. 
\end{Lemma}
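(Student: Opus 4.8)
The plan is to invoke the structure theorem for c.p.c.\ order zero maps in order to reduce the assertion to the construction of a single unital $*$-homomorphism into a multiplier algebra. Recall that a c.p.c.\ order zero map $\theta\colon M_n\to B$ factors as $\theta(a)=h_\theta\,\pi(a)$, where $h_\theta=\theta(1)$ is a positive contraction and $\pi\colon M_n\to M(C^*(\theta(M_n)))$ is a unital $*$-homomorphism commuting with $h_\theta$; conversely any such pair defines an order zero map. Since the prescribed $h$ lies in the centre $Z(\D_n^{(k)})\cong C_0([0,1]^k\setminus\{0\})$, it is fibrewise a scalar and hence commutes with every multiplier of $\D_n^{(k)}$. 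It therefore suffices to produce \emph{one} unital $*$-homomorphism $\pi\colon M_n\to M(\D_n^{(k)})$ and then put $\theta(a):=h\,\pi(a)$; this is automatically c.p.c.\ order zero (order zero because $h$ is central, so $\theta(a)\theta(b)=h^2\pi(a)\pi(b)$), and $\theta(1)=h$ as required.

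Next I would make the fibre picture explicit. Viewing $\D_n^{(k)}\subseteq C([0,1]^k,M_{n^k})$, the fibre over $t=(t_1,\dots,t_k)\neq 0$ is $\bigotimes_{i:\,t_i>0}M_n$, realised inside $M_{n^k}$ by inserting $1_n$ in every \emph{collapsed} slot (those with $t_i=0$), while the fibre over the origin is $0$; this is exactly the boundary behaviour forced by the characters $CM_n^+\to\C$, in the spirit of the universal description in Lemma~\ref{lemma:univ-prop-two-maps}. Under this identification a unital homomorphism $\pi\colon M_n\to M(\D_n^{(k)})$ is precisely a continuous field $(\pi_t)_{t\neq 0}$ of unital homomorphisms $\pi_t\colon M_n\to\mathrm{fibre}_t$, and the condition $\theta(a)=h\,\pi(a)\in\D_n^{(k)}$ forces that on each face $\{t_i=0\}$ (where $h$ need not vanish) the image of $\pi_t$ has $1_n$ in slot $i$ — in other words, $\pi_t$ must \emph{avoid} any tensor factor that has collapsed to scalars.

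I would then construct this field by induction on the number of active coordinates, i.e.\ over the faces of the cube ordered by how many $t_i$ are nonzero. On the edge where only $t_i>0$ the only option (up to the trivial) is the canonical unital embedding of $M_n$ into slot $i$. To pass from faces with active set $S'\subsetneq S$ to a face with active set $S$ one extends the already-defined boundary field into the (contractible) face, taking values in the space $\mathrm{Hom}_1\!\big(M_n,\bigotimes_{i\in S}M_n\big)$ of unital homomorphisms. The concrete mechanism is continuous unitary conjugation: for instance when $k=2$ one interpolates between $a\mapsto a\otimes 1$ and $a\mapsto 1\otimes a$ by conjugating $a\otimes 1$ with a unitary path running from $1$ to the flip, choosing the interpolation parameter to be $0$ near $\{t_2=0\}$ and $1$ near $\{t_1=0\}$ so that the relevant slot is cleaned out exactly on the face.

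The step I expect to be the main obstacle is precisely this extension across the faces where tensor factors degenerate: one must make the distinguished copy of $M_n$ migrate out of a slot at exactly the rate that slot collapses to scalars, coherently over the whole skeleton. This is possible because the spaces $\mathrm{Hom}_1(M_n,M_{n^m})$ of unital homomorphisms are path connected — any two are unitarily equivalent and $U(n^m)$ is connected — and in fact highly connected, so that no obstruction arises to extending sections over the low-dimensional contractible faces of the cube. The deleted origin causes no trouble, as $h(0)=0$ makes $\theta(a)=h\,\pi(a)$ extend continuously by $0$ there regardless of the behaviour of $\pi_t$.
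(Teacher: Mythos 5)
Your reduction is the same as the paper's: identify $\D_n^{(k)}$ with the continuous field over $[0,1]^k\setminus\{0\}$ whose fibre at $\vec{t}$ is $\E_{t_1}\otimes\cdots\otimes\E_{t_k}$ (where $\E_t=M_n$ for $t>0$ and $\E_t=\C\cdot 1\subseteq M_n$ for $t=0$), observe that a unital $*$-homomorphism $\pi\colon M_n\to M(\D_n^{(k)})$ amounts to a continuous field of unital homomorphisms into these fibres, and set $\theta=h\,\pi(\cdot)$ using centrality of $h$; your explicit $k=2$ interpolation is also correct. The gap lies exactly in the step the paper waves off as ``easy,'' namely the construction of $\pi$ for $k\ge 3$, and your justification for it is false: the spaces $\mathrm{Hom}_1(M_n,M_{nm})$ of unital homomorphisms are path connected but \emph{not} highly connected. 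Indeed, all unital embeddings are unitarily conjugate with stabilizer the unitary group of the commutant, so $\mathrm{Hom}_1(M_n,M_{nm})\cong U(nm)/(1_n\otimes U(m))$, and the homotopy sequence of this fibration gives $\pi_1\bigl(\mathrm{Hom}_1(M_n,M_{nm})\bigr)\cong\Z/n\Z$, because $\pi_1(U(m))\to\pi_1(U(nm))$, induced by $u\mapsto 1_n\otimes u$, is multiplication by $n$ on the determinant winding number. Concretely, at $k=3$: after radially retracting $[0,1]^3\setminus\{0\}$ onto the simplex $\Delta^2$, the data you have built inductively on the coordinate faces is a loop $\partial\Delta^2\cong S^1\to \mathrm{Hom}_1(M_n,M_{n^3})$, and extending it over $\Delta^2$ requires precisely that this loop vanish in $\pi_1\cong\Z/n\Z$ --- something your argument neither arranges nor verifies (contractibility of the face is irrelevant; the obstruction lives in the homotopy of the target). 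The same issue recurs for faces of dimension $5,7,\dots$, where the relevant odd homotopy groups are again $\Z/n\Z$.

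The gap is repairable, and in a way that just iterates your own $k=2$ mechanism coherently instead of invoking connectivity: make the interpolating homotopy \emph{canonical in the boundary value}. Fix once and for all a unitary path $(F_\lambda)_{\lambda\in[0,1]}$ in $M_n\otimes M_n$ from $1$ to the flip $F$. It suffices to define the field $\rho^{(k)}$ on $\Delta^{k-1}$ and pull it back along $\vec t\mapsto \vec t/(t_1+\cdots+t_k)$, which preserves the vanishing pattern of coordinates. Take $\rho^{(1)}=\mathrm{id}_{M_n}$ and, writing $\vec s=((1-\lambda)\vec s\,',\lambda)$ with $\vec s\,'\in\Delta^{k-2}$ and $\lambda\in[0,1]$, define inductively
\[
\rho^{(k)}_{\vec s}(a):=\bigl(\rho^{(k-1)}_{\vec s\,'}\otimes\mathrm{id}_{M_n}\bigr)\bigl(F_\lambda\,(a\otimes 1_n)\,F_\lambda^{\,*}\bigr).
\]
This is a unital homomorphism for every $\vec s$; at $\lambda=0$ it is $\rho^{(k-1)}_{\vec s\,'}\otimes 1_n$; at $\lambda=1$ it equals $1_{n^{k-1}}\otimes a$ \emph{independently of} $\vec s\,'$, so it is well defined and continuous at the apex; and its image lies in $C^*\bigl(\rho^{(k-1)}_{\vec s\,'}(M_n)\bigr)\otimes M_n$, so by induction it has $1_n$ in every collapsed slot. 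This produces $\pi$ for all $k$ with no obstruction theory, and with that replacement your proof is complete.
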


\begin{proof}
For $t \in [0,1]$, let 
$$
\E_t = \left \{ \begin{matrix} M_n & \mid & t>0 \\
\C \cdot 1 \subseteq M_n & \mid & t=0
					\end{matrix} \right .
$$
For $\vec{t} = (t_1,\ldots,t_k) \in [0,1]^k$, we let 
$$
\E_{\vec{t}} = \E_{t_1} \otimes \E_{t_2} \otimes \ldots \otimes \E_{t_k}
$$
We observe that 
$$
\D_n^{(k)} \cong \left \{ f \in C_0([0,1]^k\setminus \{(0,0,\ldots,0)\},M_n^{\otimes k}) \mid 
						f(\vec{t}) \in \E_{\vec{t}}
				\right \}
$$
It is easy to construct a unital homomorphism $\pi:M_n \to M(\D_n^{(k)})$, and it is readily seen that $\theta(x)(\vec{t}) = h(\vec{t})\cdot \theta(x)(\vec{t})$ satisfies the desired properties.
\end{proof}

The following is a simple modification of Lemma 2.4 from \cite{hirshberg-winter}. While that lemma can be generalized to actions of non-discrete groups in a straightforward way (as was done in \cite{hirshberg-winter}), we state it here just for actions of discrete groups to avoid notation which we do not need in this paper. It should be pointed out that our argument is essentially the same as that of  \cite[Theorem~4.9]{MatSat:strongly-outerII}. We reproduce the full proof here for the convenience of the reader.

We denote $\A_{\infty} = \ell^{\infty}(\N,\A)/c_0(\N,\A)$, with $\A$ embedded as  the subalgebra of constant sequences in $\A_{\infty}$. We use a similar idea to one which was used in Proposition 2.2 from \cite{toms-winter-ash}.

If $\alpha:G \to \aut(\A)$ is an action then we have naturally induced actions of $G$ on $\A_{\infty}$ and $\A_{\infty} \cap \A'$. We denote those actions by $\bar{\alpha}$.

\begin{Lemma}
\label{lemma:enough-to-embed-building-block}
Let $\A,\B$ be a unital separable $C^*$-algebras. Let $G$ be a discrete countable group with an action $\alpha:G \to \aut(\A)$. Suppose that $\B_n$ is a sequence of unital nuclear subalgebras (with a common unit) of $\B$ with dense union. Suppose that for any $n$, any finite subset $F \subseteq \B_n$,  any $\eps>0$ and any finite set $G_0 \subseteq G$ there is a unital homomorphism $\gamma:\B_n \to \A_{\infty}\cap \A '$ such that 
$$
\|\bar{\alpha}_g(\gamma(a)) - \gamma(a)\|<\eps
$$ 
for all $a \in F$ and all $g \in G_0$, then there is a unital homomorphism from $\B$ into the fixed point subalgebra of $ \A_{\infty}\cap \A '$. If $\B_n$ is finitely generated, then it suffices to check this for a generating set $F$. 

If $\B$ is furthermore strongly-self absorbing then it follows that the maximal (hence any) crossed product absorbs $\B$ tensorially.
\end{Lemma}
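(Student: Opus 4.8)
The plan is to promote the family of approximately invariant homomorphisms supplied by the hypothesis to a single, honestly invariant, unital homomorphism defined on all of $\B$, by a reindexing (diagonal sequence) argument inside the sequence algebra. First I would fix a dense sequence $(b_j)_{j\in\N}$ in $\B$ drawn from $\bigcup_n \B_n$ (possible since this union is dense and $\B$ is separable), an exhausting increasing sequence of finite sets $G_1 \subseteq G_2 \subseteq \cdots$ with $\bigcup_k G_k = G$, and tolerances $\eps_k \searrow 0$. We may and do assume the $\B_n$ form an increasing chain (this is the case in all our applications, where $\B = \varinjlim \B_n$); then for each $k$ I can choose an index $n_k$ and a finite set $F_k \subseteq \B_{n_k}$ so that $\{b_1,\dots,b_k\}$ lies within $\eps_k$ of $F_k$ and $F_k$ is closed under products up to $\eps_k$, so that the $F_k$ exhaust a dense subset of $\B$. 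Applying the hypothesis to $(\B_{n_k}, F_k, \eps_k, G_k)$ --- and, when $\B_{n_k}$ is finitely generated, invoking Lemma~\ref{lemma:generators-almost-fixed} to pass from invariance on a generating set to invariance on $F_k$ --- yields unital homomorphisms $\gamma_k : \B_{n_k} \to \A_{\infty}\cap \A'$ with $\|\bar\alpha_g(\gamma_k(a)) - \gamma_k(a)\| < \eps_k$ for all $a \in F_k$ and $g \in G_k$.

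Next I would realise each $\gamma_k$ by a sequence in $\A$. Because $\B_{n_k}$ is nuclear and separable and $\A_{\infty} = \ell^{\infty}(\N,\A)/c_0(\N,\A)$ is a quotient, the Choi--Effros lifting theorem lets me lift $\gamma_k$ to a c.p.c.\ map $\B_{n_k} \to \ell^{\infty}(\N,\A)$, i.e.\ to a sequence of c.p.c.\ maps $\gamma_k^{(m)} : \B_{n_k}\to \A$ ($m\in\N$) which, as $m\to\infty$, are asymptotically multiplicative, asymptotically unital, asymptotically central (since $\gamma_k$ lands in $\A'$), and satisfy $\limsup_m \|\alpha_g(\gamma_k^{(m)}(a)) - \gamma_k^{(m)}(a)\| < \eps_k$ on $F_k \times G_k$. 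For each $k$ I then select a single index $m_k$ (increasing in $k$) at which the multiplicativity, unitality and centrality defects on $F_k$ and on the first $k$ elements of a dense sequence of $\A$ are below $1/k$, and the invariance defect on $F_k\times G_k$ is below $\eps_k + 1/k$. Writing $b_j^{(k)}\in F_k$ for an $\eps_k$-approximant of $b_j$ and setting $\Phi(b_j) := [\,(\gamma_k^{(m_k)}(b_j^{(k)}))_k\,] \in \A_{\infty}$, I would check that $\Phi$ extends to a unital homomorphism $\B \to \A_{\infty}\cap\A'$: asymptotic multiplicativity together with the fact that each $\gamma_k$ is a genuine homomorphism (so no multiplicativity error accumulates) gives the homomorphism property, asymptotic centrality puts the image in $\A'$, and for each fixed $g$ and $b$, once $k$ is large enough that $g\in G_k$ and $b_j^{(k)}$ is close to $b$, the bound $\eps_k + 1/k \to 0$ forces $\bar\alpha_g(\Phi(b)) = \Phi(b)$. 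Hence $\Phi$ maps $\B$ into the fixed point algebra $(\A_{\infty}\cap\A')^{\bar\alpha}$, as claimed.

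The main obstacle is precisely this reindexing step: organising the bookkeeping so that the approximate data (defined only on the varying building blocks $\B_{n_k}$, invariant only up to $\eps_k$) collapse, in the limit, to an exactly multiplicative and exactly invariant map on all of $\B$. The two features that make it work are that each $\gamma_k$ is an honest $*$-homomorphism, which controls multiplicativity exactly on each block, and that nuclearity permits the Choi--Effros lift; separability of $\B$ and countability of $G$ then let the diagonal sequence see every element of $\B$ and every group element eventually.

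For the final assertion, suppose $\B$ is strongly self-absorbing and let $C$ denote a crossed product $\A\rtimes_\alpha G$. Represent $\Phi(b)$ by a sequence $(x_m)\subseteq\A$; then $[x_m,a]\to 0$ for $a\in\A$, and since $\Phi(b)$ is $\bar\alpha$-fixed we have $\|u_g x_m u_g^* - x_m\| = \|\alpha_g(x_m)-x_m\|\to 0$ for every $g$, where the $u_g$ are the implementing unitaries. As $\A$ embeds isometrically into every crossed product completion and the relevant commutators lie in $\A$, these norms are computed in $\A$ and are independent of the completion; since $C$ is generated by $\A$ and $\{u_g\}$, the sequence $(x_m)$ is asymptotically central in $C$. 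Thus $\Phi$ induces a unital homomorphism $\B \to C_{\infty} \cap C'$ for the maximal --- and hence, via the same sequences, for any --- crossed product. As $C$ is separable and unital, the characterisation of tensorial absorption of a strongly self-absorbing $C^*$-algebra (a unital $*$-homomorphism $\B \to C_{\infty}\cap C'$ implies $C\cong C\otimes\B$) yields that $C$ absorbs $\B$ tensorially.
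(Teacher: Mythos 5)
Your proposal is correct, but it takes a genuinely different route from the paper's. The paper reduces the whole statement to Lemma 2.4 of \cite{hirshberg-winter}: it suffices to produce, for each finite $F \subset \B$, $\eps>0$ and finite $G_0 \subseteq G$, a \emph{globally defined} unital c.p.\ map $\varphi:\B \to \A_{\infty}\cap\A'$ which is $\eps$-multiplicative and $\eps$-invariant on $F$; the reindexing that converts such approximate data into an exact homomorphism into the fixed-point algebra is entirely outsourced to that cited lemma. The paper's only real work is the construction of $\varphi$: nuclearity of $\B_n$ (used as the CPAP) gives a factorization $\B_n \stackrel{\psi}{\to} \E \stackrel{\theta}{\to} \B_n$ of the identity through a finite-dimensional algebra, the Arveson extension theorem (injectivity of the finite-dimensional $\E$) extends $\psi$ to $\tilde{\psi}:\B \to \E$, and one sets $\varphi = \gamma\circ\theta\circ\tilde{\psi}$. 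You never construct globally defined approximate maps at all: you use nuclearity through the Choi--Effros lifting theorem rather than through finite-dimensional approximation, and you carry out the reindexing by hand --- a diagonal sequence over the blocks $\B_{n_k}$, with the limit map defined on a countable dense set and extended by continuity, exact multiplicativity of each $\gamma_k$ on its own block ensuring that no multiplicativity error accumulates. Similarly, for the final assertion the paper simply cites \cite{hirshberg-winter}, whereas you verify directly that the lifted sequences representing $\Phi(b)$ are asymptotically central in any crossed product completion (the commutator norms against $\A$ and the implementing unitaries being computed inside $\A$) and then invoke the standard characterization of absorption of a strongly self-absorbing algebra via unital embeddings into $(\A\rtimes_{\alpha}G)_{\infty}\cap(\A\rtimes_{\alpha}G)'$. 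What the paper's route buys is brevity and the avoidance of lifting machinery; what yours buys is a self-contained argument in which the limiting mechanism is explicit. One point worth flagging: your assumption that the $\B_n$ form an increasing chain is not in the statement, but the paper's own proof makes the equivalent move (perturbing a finite subset of $\B$ into a single $\B_n$ presupposes some directedness of the sequence), so this is a shared, harmless imprecision rather than a gap in your argument; your use of Lemma~\ref{lemma:generators-almost-fixed} for the finitely generated refinement is exactly what the paper leaves as ``straightforward.''
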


\begin{proof}
 To verify the properties of Lemma 2.4 from \cite{hirshberg-winter}, it suffices to show that for any finite set $F \subset \B$, any finite subset $G_0 \subseteq G$  and any $\eps>0$ there is a unital c.p. map $\varphi:\B \to \A_{\infty} \cap \A '$ such that 
\begin{enumerate}
\item $\|\bar{\alpha}_g (\varphi(b)) - \varphi(b)\| <\eps$ for all $g \in G_0$, $b \in F$.
\item $\|\varphi (x)\varphi(y) - \varphi(xy)\|<\eps$ for all $x,y \in F$.
\end{enumerate}
Doing a small perturbation of the elements of $F$ if need be, we may assume without loss of generality that $F \subseteq \B_n$ for a sufficiently large $n$, and that all the elements of $F$ have norm at most 1, and that $1 \in F$ (so that $F \cdot F \supseteq F$ - this is just for notational convenience).
Fix such an $n$. Since $\B_n$ is nuclear, one can find a finite dimensional algebra $\E$ and unital c.p. maps 
$$
\xymatrix{\B_n \ar[r]^{\psi} & \E \ar[r]^{\theta} & \B_n}
$$ such that $\|\theta \circ \psi(x) - x\|<\eps/6$ for all $x \in F \cdot F$. It follows that 
$$
\|\theta\circ \psi(x)\cdot \theta\circ \psi(y) - \theta\circ \psi(xy)\|<\eps/2
$$ 
for all $x,y \in F$. By the Arveson extension theorem, we can extend $\psi$ to a unital c.p. map $\tilde{\psi}:\B \to \E$. Choose a homomorphism $\gamma$ as in the statement of the lemma, with $\eps/2$ instead of $\eps$. Define 
$$
\varphi = \gamma \circ \theta \circ \tilde{\psi}
$$
and $\varphi$ satisfies the required conditions. 
It is straightforward to check that if one can find such a $\varphi$ for a generating set then one could find it for any other finite set.

If $\B$ is strongly self-absorbing then the fact that $\A \rtimes_{\alpha} G$ absorbs $\B$ tensorially now follows from the results in \cite{hirshberg-winter}.
\end{proof}

The previous lemma, together with the characterization of prime dimension drop algebras from \cite{rordam-winter}  immediately gives the following; cf.\ \cite[Theorem~4.9]{MatSat:strongly-outerII}.
\begin{Cor}
\label{cor:enough-to-embed-order-zero-pair}
Let $\A$ be a unital separable $C^*$-algebra. Let $G$ be a discrete countable group with an action $\alpha:G \to \aut(A)$. Suppose that for any positive integer $n$, any $\eps>0$ and any finite subset $G_0 \subseteq G$ there are order zero maps $\theta:M_n \to \A_{\infty}\cap \A '$,  $\eta:M_{n+1} \to \A_{\infty}\cap \A '$ with commuting ranges such that 
$$\theta(1)+\eta(1) = 1$$ and such that 
$$
\|\bar{\alpha}_g(\theta(x)) - \theta(x)\|<\eps
$$ 
and 
$$
\|\bar{\alpha}_g(\eta(x)) - \eta(x)\|<\eps
$$ 
for any $x$ in the unit ball of $M_n$ or $M_{n+1}$, respectively, and any $g \in G_{0}$, then $\A\rtimes_{\alpha} G$ is $\Zh$-stable.
\end{Cor}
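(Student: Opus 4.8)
The plan is to derive this as a direct combination of Lemma~\ref{lemma:enough-to-embed-building-block} with the universal description of the prime dimension drop algebras, applied to $\B = \Zh$. Recall that $\Zh$ is strongly self-absorbing and, by R{\o}rdam--Winter \cite{rordam-winter}, can be realized as an inductive limit of prime dimension drop algebras $\Zh_{p_k,p_k+1}$ with unital injective connecting maps. Taking $\B_k$ to be the image of $\Zh_{p_k,p_k+1}$ in $\Zh$, one obtains an increasing sequence of unital, nuclear, finitely generated subalgebras with a common unit and dense union. Thus, to invoke Lemma~\ref{lemma:enough-to-embed-building-block} and conclude that $\A \rtimes_\alpha G$ is $\Zh$-stable, it suffices to produce, for each $k$, each $\eps>0$ and each finite $G_0 \subseteq G$, a unital homomorphism $\gamma:\B_k \to \A_\infty \cap \A'$ which is approximately $G_0$-equivariant to within $\eps$ on a generating set.

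The key translation is the universal characterization of $\Zh_{n,n+1}$: by \cite{rordam-winter} (cf.\ also Lemmas~\ref{lemma:univ-prop-two-maps} and \ref{lemma:univ-alg-of-order-0-maps}), it is the universal $C^*$-algebra generated by a pair of c.p.c.\ order zero maps $\theta:M_n \to \cdot$ and $\eta:M_{n+1}\to \cdot$ with commuting ranges, subject to the single relation $\theta(1)+\eta(1)=1$. Concretely, a unital homomorphism $\gamma:\Zh_{n,n+1} \to \E$ is exactly such a pair of maps: the two commuting order zero maps induce commuting $^*$-homomorphisms of the cones $CM_n$ and $CM_{n+1}$, which by Lemma~\ref{lemma:univ-prop-two-maps} factor through $\ker(CM_n^+ \otimes_{\max} CM_{n+1}^+ \to \C)$, and the relation $\theta(1)+\eta(1)=1$ cuts this down precisely to $\Zh_{n,n+1}$.

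With this in hand the argument is immediate. For a given $k$ I would set $n = p_k$ and feed $n$, $\eps$ and $G_0$ into the hypothesis of the corollary, obtaining commuting order zero maps $\theta:M_n \to \A_\infty \cap \A'$ and $\eta:M_{n+1}\to \A_\infty\cap\A'$ with $\theta(1)+\eta(1)=1$ that are approximately $G_0$-equivariant on the unit balls. By the universal description these assemble into a unital homomorphism $\gamma:\Zh_{p_k,p_k+1} = \B_k \to \A_\infty \cap \A'$. Since $\B_k$ is finitely generated, Lemma~\ref{lemma:enough-to-embed-building-block} lets me check approximate equivariance only on generators; as the images $\theta(e_{ij})$ and $\eta(e_{ij})$ of the matrix units lie in the unit balls and generate $\B_k$, Lemma~\ref{lemma:generators-almost-fixed} upgrades the approximate equivariance from these generators to any prescribed finite set at the cost of shrinking $\eps$. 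Lemma~\ref{lemma:enough-to-embed-building-block} then yields a unital embedding of $\Zh$ into the fixed point subalgebra of $\A_\infty \cap \A'$ and, since $\Zh$ is strongly self-absorbing, gives $\Zh$-stability of $\A\rtimes_\alpha G$. I expect the only genuine content beyond bookkeeping to be the universal characterization of $\Zh_{n,n+1}$ in terms of commuting order zero maps; it is exactly this identification that matches the order zero data in the hypothesis with the unital homomorphisms required as input to Lemma~\ref{lemma:enough-to-embed-building-block}, and everything else is a routine application of the preceding lemmas.
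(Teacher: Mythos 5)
Your proposal is correct and follows essentially the same route as the paper: the paper obtains the corollary exactly by combining Lemma~\ref{lemma:enough-to-embed-building-block} with the characterization from \cite{rordam-winter} of the prime dimension drop algebras $\Zh_{n,n+1}$ as universal $C^*$-algebras for pairs of commuting c.p.c.\ order zero maps $\theta:M_n \to \cdot$, $\eta:M_{n+1}\to\cdot$ with $\theta(1)+\eta(1)=1$. Your write-up simply makes explicit the bookkeeping (inductive limit structure of $\Zh$ with injective unital connecting maps, passing from approximate equivariance on the order zero images to a generating set via Lemma~\ref{lemma:generators-almost-fixed}) that the paper compresses into a single sentence.
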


\begin{Rmk}
In the previous lemma and corollary, if $G$ is generated by a subset $\Gamma$ then it is sufficient to consider finite subsets $G_0 \subseteq \Gamma$ rather than all finite subsets in $G$. In particular, for actions of $\Z$ we will consider a single generator.
\end{Rmk}

\begin{Lemma}
\label{lemma:enough-to-embed-several-order-zero-pairs}
Let $\A$ be a unital separable $C^*$-algebra. Let $G$ be a discrete countable group with an action $\alpha:G \to \aut(A)$. Let $d$ be a non-negative integer. Suppose that for any positive integer $n$, any $\eps>0$ and any finite subset $G_0 \subseteq G$ there are order zero maps $\theta_0,\ldots,\theta_d:M_n \to \A_{\infty}\cap \A '$,  $\eta_0,\ldots,\eta_d:M_{n+1} \to \A_{\infty}\cap \A '$ with commuting ranges such that 
for all $g \in G_0$, $k=0,1,\ldots,d$ and all $x$ in the unit ball of $M_n$ or $M_{n+1}$ respectively we have that 
$$
\|\bar{\alpha}_g(\theta_k(x)) - \theta_k(x)\|<\eps \; ,
$$ 
$$
\|\bar{\alpha}_g(\eta_k(x)) - \eta_k(x)\|<\eps
$$ 
and
$$
\sum_{k=0}^d \theta_k(1) + \eta_k(1) = 1
$$
then $\A\rtimes_{\alpha}G$ is $\Zh$-stable.
\end{Lemma}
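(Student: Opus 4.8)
The plan is to reduce to the already-established case $d=0$, which is Corollary~\ref{cor:enough-to-embed-order-zero-pair}, by merging the $d+1$ commuting order-zero pairs supplied by the hypothesis into a single such pair. Write $B = \A_{\infty}\cap \A'$ and fix a positive integer $n$; since $n$ is arbitrary, it suffices to produce, for every $\eps>0$ and finite $G_0 \subseteq G$, one order-zero pair $\Theta : M_n \to B$, $H : M_{n+1}\to B$ with commuting ranges, $\Theta(1)+H(1)=1$, and with $\Theta,H$ fixed by $\bar\alpha_g$ to within $\eps$ for all $g \in G_0$. I would apply the hypothesis (with a smaller tolerance $\eps'$ fixed later) to obtain order-zero maps $\theta_0,\dots,\theta_d : M_n \to B$ and $\eta_0,\dots,\eta_d : M_{n+1}\to B$ with all ranges commuting and $\sum_k(\theta_k(1)+\eta_k(1))=1$. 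Put $h_k = \theta_k(1)$ and $h'_k = \eta_k(1)$; these are commuting positive contractions with $\sum_k h_k \le 1$ and $\sum_k h'_k\le 1$.

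The merging step proceeds through the universal machinery of the preceding lemmas. Since the ranges of $\theta_0,\dots,\theta_d$ pairwise commute, the associated $*$-homomorphisms $CM_n \to B$ have commuting ranges, and iterating Lemma~\ref{lemma:univ-prop-two-maps} yields a $*$-homomorphism $\Phi : \D_n^{(d+1)} \to B$ carrying the $k$-th central coordinate $t_k \in Z(\D_n^{(d+1)})\cong C_0([0,1]^{d+1}\setminus\{0\})$ to $h_k$; analogously one gets $\Phi' : \D_{n+1}^{(d+1)}\to B$ from the $\eta_k$, sending $t_k$ to $h'_k$. I would then apply Lemma~\ref{lemma:univ-alg-of-order-0-maps} to the central contraction $(t_0,\dots,t_d)\mapsto \min\{\sum_k t_k,\,1\}$, which is positive, contractive and vanishes at the origin, obtaining order-zero maps $\sigma : M_n \to \D_n^{(d+1)}$ and $\sigma' : M_{n+1}\to \D_{n+1}^{(d+1)}$ whose units equal this function. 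Setting $\Theta = \Phi\circ\sigma$ and $H = \Phi'\circ\sigma'$ gives c.p.c.\ order-zero maps whose ranges lie in $C^*(\theta_0(M_n),\dots,\theta_d(M_n))$ and $C^*(\eta_0(M_{n+1}),\dots,\eta_d(M_{n+1}))$, hence commute. Because a $*$-homomorphism intertwines the continuous functional calculus of a commuting family, $\Theta(1) = \Phi(\min\{\sum_k t_k,1\}) = \min\{\sum_k h_k,1\} = \sum_k h_k$ and likewise $H(1) = \sum_k h'_k$, so that $\Theta(1)+H(1) = 1$ exactly.

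Finally I would control the action and conclude. Every generator of $\D_n^{(d+1)}$ is sent by $\Phi$ into the $*$-algebra generated by the $\theta_k(x)$, each of which is $\bar\alpha_g$-fixed to within $\eps'$; so, choosing $\eps'$ by Lemma~\ref{lemma:generators-almost-fixed} applied with $\B = \D_n^{(d+1)}$ (resp.\ $\D_{n+1}^{(d+1)}$), the homomorphism $\Phi$ (resp.\ $\Phi'$) and the compact set $\sigma(\text{unit ball of }M_n)$ (resp.\ $\sigma'$ of the unit ball of $M_{n+1}$), one forces $\|\bar\alpha_g(\Theta(x)) - \Theta(x)\|<\eps$ and $\|\bar\alpha_g(H(y)) - H(y)\|<\eps$ for $x,y$ in the respective unit balls and $g\in G_0$. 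Corollary~\ref{cor:enough-to-embed-order-zero-pair} then gives $\Zh$-stability of $\A\rtimes_\alpha G$. The step requiring the most care is the merge itself: one must \emph{simultaneously} make the combined units sum to exactly $1$ — which is precisely why the truncation $\min\{\sum_k t_k,1\}$ is used together with $\sum_k h_k \le 1$ — and ensure that the approximate $\bar\alpha$-invariance survives passage through the universal algebras $\D_n^{(d+1)}$, which is exactly the role of the quantitative Lemma~\ref{lemma:generators-almost-fixed}.
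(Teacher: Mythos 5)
Your proposal is correct and is essentially the paper's own argument: the paper likewise builds the homomorphisms $\pi:\D_n^{(d+1)}\to\A_\infty\cap\A'$ and $\pi':\D_{n+1}^{(d+1)}\to\A_\infty\cap\A'$ from the commuting $\theta_k$, $\eta_k$, uses Lemma~\ref{lemma:univ-alg-of-order-0-maps} to produce order zero maps $\mu,\mu'$ with units equal to the central elements $\sum_k t_k$, transfers approximate invariance via Lemma~\ref{lemma:generators-almost-fixed} applied to the compact generating set $Y=\bigcup_k\zeta_k(\beta(M_n^1))$, and concludes with Corollary~\ref{cor:enough-to-embed-order-zero-pair}. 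The only (cosmetic) difference is that you feed the contraction $\min\{\sum_k t_k,1\}$ into Lemma~\ref{lemma:univ-alg-of-order-0-maps}, whereas the paper uses the non-contractive $h=\sum_k t_k$ and rescales by $\|h\|$; both devices yield $\Theta(1)=\sum_k\theta_k(1)$ exactly, so the proofs agree in substance.
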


\begin{proof}	
Let $\D_n^{(d+1)}, \D_{n+1}^{(d+1)}$ be as in Lemma \ref{lemma:univ-alg-of-order-0-maps}. 

Denote by $\zeta_j:CM_n \to \D_n^{(d+1)}$ the $j$'th coordinate embedding map 
$$
\zeta_j(x) = 1 \otimes 1 \otimes \ldots \otimes x \otimes \ldots \otimes 1
$$
Let $\beta:M_n \to CM_n$ be the canonical order zero map given by 
$$
\beta(a)(t) = t \cdot a,  \;\;\;\;\;\;\; a \in M_n, t \in [0,1]
$$

Denote $h = \sum_{k=0}^d \zeta_k(\beta(1))$. Note that $h$ is a strictly positive element of the center of $\D_n^{(d+1)}$. 
Denote by $M_n^1$ the closed unit ball of $M_n$, and set 
$$Y = \bigcup_{k=0}^d \zeta_k(\beta(M_n^1))$$ 
Notice that $Y$ is compact and generates $\D_n^{(d+1)}$.

 Use Lemma~\ref{lemma:univ-alg-of-order-0-maps} to find an order zero map $\mu:M_n \to \D_n^{(d+1)}$ such that $\mu(1) = h$. (Lemma~\ref{lemma:univ-alg-of-order-0-maps} was only formulated for contractions, which $h$ is not -- but we can simply replace $h$ by $\|h\|^{-1} h$, then apply the lemma, and multiply the resulting order zero map by $\|h\|$.)    Denote $F   = \mu(M_n^1)$.

We repeat the same procedure for $\D_{n+1}^{(d+1)}$, denoting the resulting maps, sets and elements by $\zeta_j'$, $\beta'$, $h'$, $Y'$, $\mu'$ and $F'$.

Choose $\eps'$ as in Lemma \ref{lemma:generators-almost-fixed} with respect to the compact set of generators $Y$ and the compact set $F$ and with respect to the 
compact set of generators $Y'$ and the compact set $F'$ (take the least of the two). 

Let 
$$\pi:\D_n^{(d+1)} \to \A_{\infty}\cap \A '$$ 
be the unique homomorphism which satisfies that $\pi \circ \zeta_k \circ \beta = \theta_k$ for $k=0,1,\ldots d$. Notice that
$$\pi(h) = \sum_{k=0}^d\theta_k(1)$$
(and that $\pi(h)$ in fact is a contraction). 
We have then that $\pi \circ \mu:M_n \to \A_{\infty} \cap \A'$ is an order zero map, that 
$$
\pi \circ \mu (1) = \sum_{k=0}^d \pi \circ \zeta_k \circ \beta(1)
$$
and that for any $x \in M_n^1$ and any $g \in G_0$ we have
$$
\|\bar{\alpha}_g(\pi \circ \mu (x)) - \pi \circ \mu(x)\|<\eps
$$
We similarly obtain a homomorphism $$\pi':\D_{n+1}^{(d+1)} \to \A_{\infty}\cap \A '$$ 
whose range commutes with that of $\pi$, such that 
$$
\pi'(h') = \sum_{k=0}^{d}\eta_k(1)
$$ 
and such that $\pi' \circ \mu'$ satisfies the analogous properties to that of $\pi \circ \mu$.

Thus, $\pi \circ \mu (1) + \pi' \circ \mu'(1) = 1$, and those two order zero maps satisfy the conditions of Corollary \ref{cor:enough-to-embed-order-zero-pair}. Therefore $\A\rtimes_{\alpha}G$ is $\Zh$-stable, 
as required.
\end{proof}

The main theorem of this section is a partial generalization of Theorem 4.4 
from \cite{hirshberg-winter} (that theorem is for the Rokhlin property, but it applies to absorption of general strongly self-absorbing 
$C^*$-algebras and not just the Jiang-Su algebra).

For the proof, we shall implicitly use the following immediate observation: by choosing a sequence of Rokhlin tower elements, one can view them as sitting in the central sequence algebra $\A_{\infty} \cap \A'$, in which case the approximate properties in the definition of Rokhlin dimension hold exactly. 

The reason why we have to assume the Rokhlin property with commuting towers is that we want to use Lemma~\ref{lemma:enough-to-embed-several-order-zero-pairs}, which in turn relies on Corollary~\ref{cor:enough-to-embed-order-zero-pair}. The latter required us to produce almost central and almost invariant copies of prime dimension drop intervals, which in turn are given by pairs of \emph{commuting} cones over $M_{n}$ and $M_{n+1}$, respectively. Lemma~\ref{lemma:enough-to-embed-several-order-zero-pairs} then showed that instead of finding pairs of such matrix cones it is in fact enough to find larger ensembles of mutually commuting matrix cones, as long as they add up to the identity in a suitable way. 

\begin{Thm}
\label{Thm:permanence-Z}
Let $\A$ be a separable unital $\Zh$-stable $C^*$-algebra, and let $\alpha$ be an automorphism of $\A$ with $\dimrokhct(\alpha)=d<\infty$, then the crossed product $\A \rtimes_{\alpha}\Z$ is $\Zh$-stable as well.
\end{Thm}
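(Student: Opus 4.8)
The plan is to verify the hypotheses of Lemma~\ref{lemma:enough-to-embed-several-order-zero-pairs} for the $\Z$-action generated by $\alpha$. By the remark following Corollary~\ref{cor:enough-to-embed-order-zero-pair} it suffices to treat the single generator, so for each $n$ and each $\eps>0$ I must produce commuting-range c.p.c.\ order zero maps $\theta_0,\dots,\theta_d\colon M_n\to\A_{\infty}\cap\A'$ and $\eta_0,\dots,\eta_d\colon M_{n+1}\to\A_{\infty}\cap\A'$ that are approximately fixed by the induced automorphism $\bar\alpha$ and satisfy $\sum_l\big(\theta_l(1)+\eta_l(1)\big)=1$. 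As noted just before the theorem, I first pass to the central sequence algebra: since $\dimrokhct(\alpha)=d$, for a tower height $m$ (to be taken large) I may choose honest positive contractions $f^{(l)}_{r,k}\in\A_{\infty}\cap\A'$, for colors $l\in\{0,\dots,d\}$ and $r\in\{0,1\}$, with exact within-color orthogonality, exact mutual commutation across all colors and both towers, exact cyclic permutation $\bar\alpha(f^{(l)}_{r,k})=f^{(l)}_{r,k+1}$ (periods $m$ and $m+1$), and $\sum_{l,r,k}f^{(l)}_{r,k}=1$.

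Next I build the matrix structure from $\Zh$-stability and spread it along a tower. Fix a color $l$ and its height-$m$ tower $(f^{(l)}_{0,k})_k$. Because $\A$ is $\Zh$-stable and separable, a reindexing argument in the relative commutant of the separable $C^*$-algebra generated by all the $f^{(l)}_{r,k}$ produces a c.p.c.\ order zero map $\mu_l\colon M_n\to\A_{\infty}\cap\A'$ whose range lies in the hereditary subalgebra of $f^{(l)}_{0,0}$ and commutes with every tower element; here the commuting-towers hypothesis is essential, since it is what makes the corner of $f^{(l)}_{0,0}$ compatible with the other colors. I then set $\theta_l(x)=\sum_{k=0}^{m-1}w_k\,\bar\alpha^{k}\big(\mu_l(x)\big)$ for a slowly varying weight sequence $w_k\in[0,1]$. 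Each summand is supported in the corner of $f^{(l)}_{0,k}$, so the summands are mutually orthogonal and $\theta_l$ is again order zero with $\theta_l(1)=\sum_k w_k f^{(l)}_{0,k}$; crucially, $\bar\alpha(\theta_l(x))-\theta_l(x)=\sum_{k}(w_{k-1}-w_k)\,\bar\alpha^{k}\big(\mu_l(x)\big)$ up to the two boundary (wrap-around) terms, and by orthogonality of the summands its norm is at most $\big(\max_k|w_{k-1}-w_k|\big)\|x\|$. Choosing a tent-shaped $w$ (as for the decay factors already used in Proposition~\ref{prop:single-Rokhlin-tower}) that vanishes at the seam kills the boundary terms and makes the remaining error $O(1/m)$. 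The maps $\eta_l\colon M_{n+1}\to\A_{\infty}\cap\A'$ are produced identically from the height-$(m+1)$ tower $(f^{(l)}_{1,k})_k$, and the $\mu_l$ for different colors are chosen (again by reindexing) to commute with one another and with all towers, which forces the ranges of the $\theta_l,\eta_l$ to commute.

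To obtain an exact partition of unity I exploit that each color carries two towers of coprime heights $m$ and $m+1$: I place complementary, mutually offset tent weights on the two towers so that at every level at least one weight is bounded away from $0$. Then $G:=\sum_l\big(\theta_l(1)+\eta_l(1)\big)$ is a positive element of $\A_{\infty}\cap\A'$ that commutes with all the $\theta_l,\eta_l$, is bounded below, and is $\bar\alpha$-invariant up to $O(1/m)$. Replacing each map by its compression $G^{-1/2}(\,\cdot\,)G^{-1/2}$ preserves both the order zero property and the commuting-range property, yields $\sum_l\big(\theta_l(1)+\eta_l(1)\big)=1$ exactly, and perturbs the approximate invariance only by $O(1/m)$ because $\|G^{-1/2}\|$ is controlled. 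Taking $m$ large enough to beat $\eps$, Lemma~\ref{lemma:enough-to-embed-several-order-zero-pairs} then gives $\Zh$-stability of $\A\rtimes_\alpha\Z$.

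The main obstacle, and the step I expect to require the most care, is reconciling two competing demands: approximate $\bar\alpha$-invariance forces the weights to vary slowly and to decay at the seam (otherwise the single wrap-around mismatch $\bar\alpha^{m}(\mu_l)-\mu_l$ contributes an error of order $1$ that the tower length cannot damp), whereas the criterion demands the units sum exactly to $1$. The offset-seam weighting on the two coprime towers together with the final $G^{-1/2}$-normalization is the device that resolves this tension, and verifying throughout that all ranges commute is precisely what the commuting-towers assumption $\dimrokhct(\alpha)<\infty$ is there to guarantee.
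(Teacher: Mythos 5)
Your overall frame---verifying Lemma~\ref{lemma:enough-to-embed-several-order-zero-pairs} with exact Rokhlin elements in $\A_{\infty}\cap\A'$ and spreading matrix structure obtained from $\Zh$-stability along the towers---is the same as the paper's, but the central mechanism has a genuine gap. The identity $\theta_l(1)=\sum_k w_k f_{0,k}^{(l)}$, on which your partition-of-unity argument rests, presupposes a c.p.c.\ order zero map $\mu_l\colon M_n\to\A_{\infty}\cap\A'$ with range in the hereditary subalgebra generated by $f_{0,0}^{(l)}$ and with $\mu_l(1)=f_{0,0}^{(l)}$ \emph{exactly}. The reindexing argument you invoke does not produce this: what it produces is a unital embedding $\iota_l\colon\Zh\to\A_{\infty}\cap\A'$ commuting with the towers, hence maps of the form $\mu_l(x)=f_{0,0}^{(l)}\,\iota_l(\theta'(x))$ with $\theta'\colon M_n\to\Zh$ order zero, and then $\mu_l(1)=f_{0,0}^{(l)}\,\iota_l(\theta'(1))$ where necessarily $\theta'(1)\neq 1$: a c.p.c.\ order zero map from $M_n$ ($n\ge 2$) with unital image is a unital $^*$-homomorphism, which cannot exist into the projectionless algebra $\Zh$. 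So every level carries an unavoidable deficiency $f_{0,k}^{(l)}\,\bar{\alpha}^k\big(\iota_l(1-\theta'(1))\big)$, and your $G$ is strictly smaller than you compute.

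This deficiency is fatal to the final normalization, and your device of compensating the $M_n$-shortfall on the height-$m$ tower by $M_{n+1}$-mass on the height-$(m+1)$ tower cannot repair it, because the shortfall $1-\theta'(1)=\eta'(1)$ must be compensated against the \emph{same} tower element, not against elements of the other tower. The cleanest way to see the method break: the hypotheses of Theorem~\ref{Thm:permanence-Z} allow the second towers to vanish identically (single towers; e.g.\ $d=0$, where exact orthogonality together with $\sum_k f_{0,k}=1$ forces the $f_{0,k}$ to be projections). Then your $\eta_l\equiv 0$, and---even granting the unjustified $\mu_l(1)=f_{0,0}^{(l)}$---you get $G=\sum_k w_k f_{0,k}$ with the tent weight vanishing at the seam, so $G f_{0,0}=0$ and $G$ is not invertible; there is nothing left to offset against, and $G^{-1/2}$ does not exist. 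The paper resolves both problems at once, and differently: at \emph{every} level $j$ of \emph{both} towers it installs the complementary pair $\bar{\alpha}^j(\iota_k\circ\theta(\cdot))$, $\bar{\alpha}^j(\iota_k\circ\eta(\cdot))$ with $\theta(1)+\eta(1)=1$ already inside $\Zh$, so that $\theta_k(1)+\eta_k(1)=\sum_j f_{0,j}^{(k)}+\sum_j f_{1,j}^{(k)}$ holds on the nose and summing over colors gives exactly $1$ with no normalization; and it handles the wrap-around not with decaying weights but with correcting unitaries $u_j^{(k)}$ obtained by interpolating along a rectifiable path from $1$ to a flip-type unitary $w\in U(\Zh\otimes\Zh)$, so that $\|u_j^{(k)}-u_{j+1}^{(k)}\|\le 2L/n$ yields approximate $\bar{\alpha}$-invariance while every level keeps its full Rokhlin weight. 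The complementary-pair-at-each-level idea is the missing ingredient in your argument.
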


\begin{proof}
We establish that the conditions of Lemma \ref{lemma:enough-to-embed-several-order-zero-pairs} hold. 

Let $r$ be a given positive integer. Fix two order zero maps $\theta:M_r \to \Zh$, $\eta:M_{r+1} \to \Zh$ with commuting ranges such that 
$\theta(1)+\eta(1) = 1$. Let $K$ be the union of the images of the unit balls of $M_r$, $M_{r+1}$ under those maps.

We define unital homomorphisms $\iota_0,\ldots,\iota_d,\mu_0,\ldots,\mu_d:\Zh \to \A_{\infty} \cap \A '$ as follows. First fix 
$\iota_0:\Zh \to \A_{\infty} \cap \A '$. Proceeding inductively, we choose $\iota_k:\Zh \to \A_{\infty} \cap \A '$ such that its image furthermore commutes with  $\bar{\alpha}^j(\iota_i(\Zh))$ for all $j \in \Z$ and $i<k$ (this can be done by Lemma 4.5 of \cite{hirshberg-winter}). We define $\mu_0,\ldots,\mu_d$ in a similar way, such that the image of $\mu_k$ commutes with
 $\bar{\alpha}^j(\mu_i(\Zh))$ for all $j \in \Z$ and $i<k$ as well as 
  $\bar{\alpha}^j(\iota_i(\Zh))$ for all $j \in \Z$ and all $i=0,1,\ldots ,d$.

 Let $$\textstyle \B_k := C^*\left ( \mu_k(\Zh) \cup
\bigcup_{j=-\infty}^{\infty} \bar{\alpha}^j(\iota_k(\Zh)) \right ) $$ note that $\B_k \cong \B_k \otimes \Zh$, and that the elements of 
$\B_k$ commute with those of $\B_m$ for $k \neq m$.

Choose a unitary $w \in U(\Zh \otimes \Zh)$ such that
$$\|w(x\otimes 1)w^*-1 \otimes x\| < \frac{\eps}{4}$$ for all $x \in K$. Notice that the unitary group of the Jiang-Su algebra is connected. 
Thus, $w$ can be connected to $1$ via a rectifiable path.
Let $L$ be the length of such a path. Choose $n$ such that
$L\|x\|/n <\eps/8$ for all $x \in F$.

 Define homomorphisms $$\rho_k,\rho_k':\Zh \otimes \Zh \to
\B_k \;\;\;\;\; k=0,1,\ldots,d$$ by
$$\rho_k(x \otimes y) = \iota_k (x) \mu_k(y), \;\; \rho_k'(x\otimes y) =
\bar{\alpha}^n(\iota_k(x))\mu_k(y) \, .$$ Pick unitaries $1=w_0,w_1,\ldots,w_n = w \in
U_0(\Zh \otimes \Zh)$ such that $\|w_j-w_{j+1}\| \leq L/n$ for
$j=0,\ldots,n-1$. Now, for $k=0,\ldots,d$, let 
$$
u_j^{(k)} = \rho_k(w_j)^*\rho_k'(w_j)
$$ Note that
$$\|u_j^{(k)} - u_{j+1}^{(k)}\| \leq \frac{2L}{n}$$ for $j=0,\ldots,n-1$, that $u_0^{(k)} = 1$,
and that $$\|u_n^{(k)} \bar{\alpha}^n(\iota_k(x)) u_n^{(k) \, *} - \iota_k(x) \| <
\frac{\eps}{2}$$ for all $x\in K$.

Similarly, we choose unitaries $1=v_0^{(k)} ,v_1^{(k)},\ldots,v_{n+1}^{(k)} \in \B_k$
such that $$\|v_j^{(k)}-v_{j+1}^{(k)}\| \leq \frac{2L}{n+1}$$ for $j=0,\ldots,n$
and
$$\|v_{n+1}^{(k)}\bar{\alpha}^{n+1}(\iota_k(x))v_{n+1}^{(k)\, *} - \iota_k(x)\| < \frac{\eps}{2}$$
for all $x \in K$.

Let $\{f_{0,0}^{(l)},\ldots,f_{0,n-1}^{(l)}, f_{1,0}^{(l)},\ldots,f_{1,n}^{(l)} \mid l = 0,\ldots,d\} \in \A_{\infty} \cap \A '$ 
commuting Rokhlin elements in $\A_{\infty} \cap \A '$ which furthermore commute with $\B_0,\B_1,\ldots,\B_d$. 

Now, set
\begin{eqnarray*}
\theta_k (x)& = & \sum_{j=0}^{n-1}f_{0,j}^{(k)}\bar{\alpha}^{j-n}(u_j^{(k)})\bar{\alpha}^j
(\iota_k \circ \theta(x))\bar{\alpha}^{j-n}(u_j^{(k)\, *}) \\
&&+ 
\sum_{j=0}^{n}f_{1,j}^{(k)}\bar{\alpha}^{j-n}(v_j^{(k)})\bar{\alpha}^j
(\iota_k \circ \theta(x))\bar{\alpha}^{j-n}(v_j^{(k)\, *})
\end{eqnarray*}
and
\begin{eqnarray*}
\eta_k (x)& = & \sum_{j=0}^{n-1}f_{0,j}^{(k)}\bar{\alpha}^{j-n}(u_j^{(k)})\bar{\alpha}^j
(\iota_k \circ \eta(x))\bar{\alpha}^{j-n}(u_j^{(k)\, *}) \\
&&+ 
\sum_{j=0}^{n}f_{1,j}^{(k)}\bar{\alpha}^{j-n}(v_j^{(k)})\bar{\alpha}^j
(\iota_k \circ \eta(x))\bar{\alpha}^{j-n}(v_j^{(k)\, *}).
\end{eqnarray*}

We can check that 
$$
\|\bar{\alpha}(\theta_k(x)) - \theta_k(x)\|<\eps
$$
and 
$$
\|\bar{\alpha}(\eta_k(x)) - \eta_k(x)\|<\eps
$$
for all $x$ in the unit balls of $M_r$, $M_{r+1}$, respectively.  
Furthermore, 
$$
\theta_k(1) + \eta_k(1) = \sum_{j=0}^{n-1}f_{0,j}^{(k)} + \sum_{j=0}^{n}f_{1,j}^{(k)}
$$ 
Therefore those maps satisfy the conditions of Lemma~\ref{lemma:enough-to-embed-several-order-zero-pairs}.
\end{proof}

We can also obtain in a similar way the analogous result for actions of finite groups with finite Rokhlin dimension and commuting towers.

\begin{Thm}
\label{finite-action-Z-stable}
Let $\A$ be a separable unital $\Zh$-stable $C^*$-algebra, let $G$ be a finite group and let $\alpha:G \to \aut(\A)$ be an action with Rokhlin dimension $d<\infty$, such that furthermore the Rokhlin elements from Definition \ref{def: positive Rokhlin finite groups} can be chosen to commute with each other. Then the crossed product $\A \rtimes_{\alpha}G$ is $\Zh$-stable as well.
\end{Thm}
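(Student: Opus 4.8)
The plan is to verify the hypotheses of Lemma~\ref{lemma:enough-to-embed-several-order-zero-pairs} for the finite group $G$, taking $G_0=G$; $\Zh$-stability of $\A\rtimes_\alpha G$ is then immediate. As in the preparation for Theorem~\ref{Thm:permanence-Z}, I would first pass to the central sequence algebra: choosing a sequence of Rokhlin systems for $\alpha$ and regarding them as elements of $\A_\infty\cap\A'$ turns the approximate relations of Definition~\ref{def: positive Rokhlin finite groups} into exact ones. Thus I obtain positive contractions $(f_g^{(l)})_{l=0,\ldots,d;\,g\in G}$ in $\A_\infty\cap\A'$ with $f_g^{(l)}f_h^{(l)}=0$ for $g\neq h$, with $\sum_{l,g}f_g^{(l)}=1$, with $\bar{\alpha}_h(f_g^{(l)})=f_{hg}^{(l)}$, and -- invoking the commuting-tower hypothesis -- with $[f_g^{(l)},f_h^{(m)}]=0$ for all indices.

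Fix a positive integer $r$ and order zero maps $\theta:M_r\to\Zh$, $\eta:M_{r+1}\to\Zh$ with commuting ranges and $\theta(1)+\eta(1)=1$. I would then build unital embeddings $\iota_0,\ldots,\iota_d:\Zh\to\A_\infty\cap\A'$ inductively so that, for each $k$, the orbit $\bigcup_{g\in G}\bar{\alpha}_g(\iota_k(\Zh))$ commutes elementwise with $\bigcup_{g\in G}\bar{\alpha}_g(\iota_i(\Zh))$ for all $i<k$; since $G$ is finite these orbits are separable, so this is possible exactly as in Lemma~4.5 of \cite{hirshberg-winter}. Writing $\B_k:=C^*\big(\bigcup_{g\in G}\bar{\alpha}_g(\iota_k(\Zh))\big)$, I would finally arrange the central Rokhlin elements $f_g^{(l)}$ to commute with every $\B_k$ as well.

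The construction itself is then to set, for $k=0,\ldots,d$,
\[
\theta_k(x)=\sum_{g\in G}f_g^{(k)}\,\bar{\alpha}_g(\iota_k(\theta(x))),\qquad \eta_k(x)=\sum_{g\in G}f_g^{(k)}\,\bar{\alpha}_g(\iota_k(\eta(x))).
\]
Because $f_g^{(k)}$ commutes with $\B_k$, each summand is a compression of a c.p.\ map, and the orthogonality $f_g^{(k)}f_h^{(k)}=0$ together with the order zero property of $\theta$ and $\eta$ shows $\theta_k,\eta_k$ are c.p.c.\ order zero maps; the same orthogonality kills all cross terms with $g\neq h$, so (using that $\theta,\eta$ have commuting ranges) $\theta_k$ and $\eta_k$ have commuting ranges, while commutation across distinct colors follows from the orbit-commutation arranged above. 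The decisive simplification over the $\Z$-case of Theorem~\ref{Thm:permanence-Z} is that the Rokhlin relation $\bar{\alpha}_h(f_g^{(k)})=f_{hg}^{(k)}$, after the substitution $g\mapsto hg$, yields $\bar{\alpha}_h(\theta_k(x))=\theta_k(x)$ and $\bar{\alpha}_h(\eta_k(x))=\eta_k(x)$ \emph{exactly}; since left translation permutes $G$ without boundary effects, no correcting unitaries and no appeal to connectedness of $U(\Zh)$ are required. Finally $\theta_k(1)+\eta_k(1)=\sum_{g}f_g^{(k)}$ (as $\iota_k$ is unital and $\theta(1)+\eta(1)=1$), whence $\sum_{k}\big(\theta_k(1)+\eta_k(1)\big)=\sum_{l,g}f_g^{(l)}=1$, and all hypotheses of Lemma~\ref{lemma:enough-to-embed-several-order-zero-pairs} hold. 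The only genuine obstacle is the bookkeeping of the commutation relations -- the inductive choice of the $\iota_k$ with mutually commuting orbits and the arrangement of the Rokhlin elements relative to the $\B_k$; once these are secured, every remaining condition is a direct computation.
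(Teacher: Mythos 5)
Your proposal is correct and follows essentially the same route as the paper's proof: exact Rokhlin elements realized in $\A_\infty\cap\A'$ and arranged to commute with the orbit algebras $\B_k$, the identical formulas $\theta_k(x)=\sum_{g\in G}f_g^{(k)}\bar{\alpha}_g(\iota_k\circ\theta(x))$ and $\eta_k(x)=\sum_{g\in G}f_g^{(k)}\bar{\alpha}_g(\iota_k\circ\eta(x))$, exact $G$-invariance by reindexing, and verification of Lemma~\ref{lemma:enough-to-embed-several-order-zero-pairs}. The only deviation is that you omit the auxiliary embeddings $\mu_0,\ldots,\mu_d$ which the paper carries over from the $\Z$-case (Theorem~\ref{Thm:permanence-Z}) but never actually uses in the finite-group computation, so this is a harmless streamlining rather than a different argument.
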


\begin{proof}
The proof is a simpler version of the proof of Theorem \ref{Thm:permanence-Z} - here we do not need the choice of correcting unitaries. 
We again establish that the conditions of Lemma \ref{lemma:enough-to-embed-several-order-zero-pairs} hold, and we start in a similar way.

Let $r$ be a given positive integer. Fix two order zero maps $\theta:M_r \to \Zh$, $\eta:M_{r+1} \to \Zh$ with commuting ranges such that 
$\theta(1)+\eta(1) = 1$. 
We define unital homomorphisms $\iota_0,\ldots,\iota_d:\Zh \to \A_{\infty} \cap \A '$ as follows. First fix 
$\iota_0:\Zh \to \A_{\infty} \cap \A '$. Proceeding inductively, we choose $\iota_k:\Zh \to \A_{\infty} \cap \A '$ such that its image furthermore commutes with  $\bar{\alpha}_g(\iota_i(\Zh))$ for all $g \in G$ and $i<k$. 

 Let $$\textstyle \B_k := C^*\left (
\bigcup_{g\in G} \bar{\alpha}_g(\iota_k(\Zh)) \right ); $$ note that the elements of 
$\B_k$ commute with those of $\B_m$ for $k \neq m$.

Let $\left( f^{(l)}_g \right)_{l=0,\ldots,d;\; g \in G} \subseteq \A_{\infty} \cap \A'$ be Rokhlin elements, and as in the statement of the theorem we assume that they all commute with each other, and which are furthermore chosen to commute with $\B_0,\B_1,\ldots,\B_d$. 

Define
$$
\theta_k (x)= \sum_{g \in G} f_{g}^{(k)}\bar{\alpha}_g
(\iota_k \circ \theta(x)) \;\; , \;\; 
\eta_k (x)= \sum_{g \in G} f_{g}^{(k)}\bar{\alpha}_g(\iota_k \circ \eta(x)).
$$
One checks that the images of those maps are fixed by the action of $G$ on $\A_{\infty} \cap \A'$, and 
satisfy the conditions of Corollary \ref{cor:enough-to-embed-order-zero-pair}.
\end{proof}

\section{Topological dynamics}
\label{section-irrational}

\noindent
In the initial version of this paper we showed that minimal homeomorphisms on finite dimensional compact metrizable spaces always have finite Rokhlin dimension:

\begin{Thm}
\label{main}
Let $(T,h)$ be a minimal dynamical system with $T$ compact and metrizable; suppose that  $\dim T \le n<\infty$ and let $\alpha$ be the induced action on $C(T)$. Then, 
\[
\dimrokh^{\mathrm{s}} (C(T),\alpha) \le 2 n+1.
\] 
\end{Thm}

The proof relied on a detailed analysis of the representation theory of certain large orbit breaking subalgebras of the transformation group $C^{*}$-algebra. After the paper appeared on the arXiv, Szab\'o  in \cite{Sza:dimnuc} generalized this to the case of aperiodic homeomorphisms. In fact, with essentially the same methods, he could push the result even to free and aperiodic $\mathbb{Z}^{d}$-actions.

Szab\'o's proof -- although by no means trivial -- is somewhat easier and also more conceptual than our original argument, which we therefore have removed from this version (even though at least it has the slight advantage over Szab\'o's proof that it provides particularly nice and fairly explicit approximations of large orbit breaking subalgebras of the crossed product). However, the reader might find the following short and direct direct proof of the fact that irrational rotations have Rokhlin dimension $1$ instructive.

\begin{Thm} \label{Thm:irrational-rotation}
Let $\alpha$ be the irrational rotation by $\theta$ on $C(\T)$, then $\dimrokhsingle(\alpha) = 1$.
\end{Thm}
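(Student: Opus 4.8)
The plan is to prove the two inequalities $\dimrokhsingle(\alpha) \ge 1$ and $\dimrokhsingle(\alpha) \le 1$ separately.

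For the lower bound I would argue that $\alpha$ cannot have the Rokhlin property. By Proposition~\ref{prop:single-Rokhlin-tower} we have $\dimrokh(\alpha) \le \dimrokhsingle(\alpha)$, and by Remarks~\ref{Rokhlin-rems2}(iv) the condition $\dimrokh(\alpha)=0$ is equivalent to $\alpha$ having the Rokhlin property of Definition~\ref{rokhlin-original}. For $p \ge 2$ the latter would produce genuine projections $e_{r,j} \in C(\T)$ with $\sum_{r,j} e_{r,j}=1$ and $\|\alpha_{1}(e_{r,j})-e_{r,j+1}\|<\eps$. Since $\T$ is connected, the only projections in $C(\T)$ are $0$ and $1$, so exactly one of these projections equals $1$ and all others vanish; but then $\|\alpha_1(e_{r,j})-e_{r,j+1}\|=1$ for a suitable consecutive pair of levels, which is a contradiction once $\eps<1$. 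Hence $\dimrokh(\alpha)\ge 1$ and therefore $\dimrokhsingle(\alpha)\ge 1$.

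For the upper bound I would construct two single towers of positive functions, one per color, realizing $\dimrokhsingle(\alpha)\le 1$ via Proposition~\ref{dimrok-def}b; since $C(\T)$ is commutative the commuting-tower condition is automatic. Write $\T=\R/\Z$ with $\alpha_1$ precomposition by $x\mapsto x-\theta$. I would first pass to heights $p$ that are denominators of the continued fraction expansion of $\theta$: for such $p$ the three-distance theorem guarantees that the orbit points $x_j:=j\theta \bmod 1$, $j=0,\dots,p-1$, cut $\T$ into arcs of only two lengths, both comparable to $1/p$, and moreover $R_\theta^{\,p}\colon x\mapsto x+p\theta$ is uniformly close to the identity. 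These are exactly the two features that make a cyclic tower of height $p$ possible. Color $0$ then consists of narrow bumps $f_j:=\beta(\,\cdot\,-x_j)$ centered at the $x_j$, where $\beta$ is a fixed positive bump of radius less than half the minimal gap; these have pairwise disjoint supports, satisfy $\alpha_1(f_j)=f_{j+1}$ exactly for $j<p-1$, and close up cyclically because $\alpha_1(f_{p-1})=\beta(\,\cdot\,-p\theta)\approx\beta(\,\cdot\,)=f_0$. Color $1$ consists of a single bump $g_j$ supported in the open arc immediately to the spatial right of $x_j$; since $\alpha_1$ translates the whole configuration by $\theta$, it carries the gap to the right of $x_j$ onto the gap to the right of $x_{j+1}$, so the $g_j$ again form a cyclic tower with pairwise disjoint supports. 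Choosing the tapers of $\beta$ and of the gap bumps so that the two colors interlock into a partition of unity, $\sum_j (f_j+g_j)\approx 1$, yields all conditions of Definition~\ref{Rokhlin-dim-single-auto}c with $d=1$. Heuristically the two colors are forced for the same reason the lower bound holds: a single family of functions on the circle cannot be simultaneously a partition of unity and pairwise disjointly supported, which is precisely the statement that $\T$ has covering dimension $1$.

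The main obstacle I anticipate is passing from these favorable heights to the heights actually demanded by the definition, together with the quantitative control of the tolerances. The clean construction above relies on $R_\theta^{\,p}$ being near the identity and on the gaps being nearly equal, and both properties genuinely fail for heights $p$ lying far from any continued-fraction denominator; this is exactly the phenomenon flagged for single towers in Remarks~\ref{Rokhlin-rems2}(vi). I would address it by producing the towers for a cofinal set of heights and then transferring to the required heights through the rearrangement ideas around that remark (merging and regrouping tower levels, e.g.\ combining every $p$-th level of a tall tower into one level, an operation that preserves the single-tower, orthogonality, and partition-of-unity properties). The remaining delicate point is purely bookkeeping: verifying that the interlocking tapers keep the orthogonality $\|f_if_j\|$, $\|g_ig_j\|$, the equivariance $\|\alpha_1(f_j)-f_{j+1}\|$ and $\|\alpha_1(g_j)-g_{j+1}\|$, and the normalization $\|\sum_j(f_j+g_j)-1\|$ all below the prescribed $\eps$. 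Combining the two inequalities then gives $\dimrokhsingle(\alpha)=1$.
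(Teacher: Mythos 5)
Your lower bound is correct and is essentially what the paper does (connectedness of $\T$ rules out nontrivial projections, so Rokhlin dimension $0$ is impossible). The gap is in the upper bound, and it is not bookkeeping: centering the color-$0$ bumps at the orbit points $x_j=j\theta$ cannot meet an arbitrarily small tolerance when $\theta$ is badly approximable (bounded partial quotients, e.g.\ the golden mean $\theta=(\sqrt5-1)/2$). Disjointness of the $f_j$ within a color, together with the fact that your gap bumps $g_i$ vanish at the orbit points, forces $f_j(x_j)\approx 1$ and forces the support radius of each bump to be at most half the minimal gap, which for height $p=q_k$ (a continued-fraction denominator) is $\tfrac12\|q_{k-1}\theta\|$. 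The single-tower (cyclic) condition requires $\|\alpha_1(f_{p-1})-f_0\|<\eps$, i.e.\ a bump of height $\approx 1$ must nearly coincide with its translate by $\|q_k\theta\|$. For the golden mean, $\|q_k\theta\|/\|q_{k-1}\theta\|\to 1/\phi\approx 0.618 > 1/2$, so the translate of the peak lands outside the bump's support and the closure error is $1$, not $o(1)$. Moreover, badly approximable means precisely $\inf_p p\,\|p\theta\|>0$, while the minimal gap of any $p$ orbit points is at most $1/p$; so this failure occurs at \emph{every} height, and your fallback of producing towers for a cofinal set of heights and regrouping cannot repair it (regrouping also destroys single-tower cyclicity unless the heights divide, cf.\ Remark~\ref{Rokhlin-rems2}(vi)). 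Your heuristic that $R_\theta^{\,p}$ is ``uniformly close to the identity'' is true in absolute terms but false at the only scale that matters, the gap scale $\sim 1/p$.

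The paper's proof avoids the $\theta$-orbit entirely, and this decoupling is the idea your proposal is missing. Fix a prime $p$; since successive continued-fraction numerators of $p\theta$ are coprime, one finds a reduced fraction with $\bigl|\theta-\tfrac{m}{np}\bigr|<\tfrac{1}{pn^2}$ and $\gcd(m,np)=1$, with $n$ as large as desired. One then takes $np$ tent functions of width $1/(np)$ centered on the uniform grid $\{k/(np)\}$ (plus a half-grid copy for the second color). These are permuted \emph{exactly} and cyclically by the rational rotation $\gamma$ by $m/(np)$, and summing over the $\gamma$-orbit indices congruent to $j$ mod $p$ produces two exact cyclic $\gamma$-towers of height $p$. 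The resulting functions are $2np$-Lipschitz, so $\|\alpha(\tilde f_j)-\gamma(\tilde f_j)\|\le 2np\cdot\tfrac{1}{pn^2}=\tfrac 2n$, which is small because the rational approximation error $1/(pn^2)$ is \emph{quadratically} smaller than the grid spacing $1/(np)$. This works for every irrational $\theta$, with no Diophantine hypothesis, exactly because the tower height $p$ is decoupled from the grid fineness $np$ — whereas in your construction the two scales coincide and the Diophantine type of $\theta$ enters fatally.
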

\begin{Rmk}
It follows immediately that if $(X,h)$ is a dynamical system which has an irrational rotation as a factor, that is there is a commuting diagram 
$$
\xymatrix{
X \ar[r]^h \ar[d]_{\pi} & X \ar[d]^{\pi} \\
\T \ar[r]^{\rho} & \T
}
$$
then $\dimrokhsingle(C(X),\alpha) \leq 1$ as well.
\end{Rmk}

\begin{proof}[Proof of Theorem \ref{Thm:irrational-rotation}]
Given a prime number $p$ and $\eps>0$, we will exhibit positive $\{\tilde{f}_i,\tilde{g}_i\}_{i=0,1,\ldots,p-1}$ such that 
\begin{enumerate}
\item $\displaystyle \|\alpha(\tilde{f}_j) - \tilde{f}_{j+1\, \textrm{(mod)}\,p}\| \; , \; \|\alpha(\tilde{g}_j) - \tilde{g}_{j+1\, \textrm{(mod)}\,p}\| <\eps$
\item  The $\tilde{f}_j$'s are pairwise orthogonal and the $\tilde{g}_j$'s are pairwise orthogonal.
\item $\displaystyle \tilde{f}_0+\tilde{f}_1+\ldots+\tilde{f}_{p-1}+\tilde{g}_0+\tilde{g}_1+\ldots+\tilde{g}_{p-1}=1 $
\end{enumerate}
and that shows that $\alpha$ has Rokhlin dimension $1$ with single towers. We note that since $C(\T)$ has no nontrivial projections, $\alpha$ clearly cannot have Rokhlin dimension $0$.

We first recall some basic facts about continued fractions (see \cite{schmidt}, Chapter 1). If $t$ is an irrational number and if $m_j/n_j$ is the sequence of approximants for $t$, then $|t-\frac{m_j}{n_j}|<\frac{1}{n_j^2}$ for all $j$. Furthermore, the $m_j$'s satisfy a recursive formula of the form $m_{j+1} = a_j m_j + m_{j-1}$. It follows by induction that $\textrm{gcd}(m_j,m_{j+1}) =  \textrm{gcd}(m_0,m_{1}) = 1$, and thus any two successive $m_j$'s are coprime. In particular, given a prime number $p$ and an irrational number $t$, there are infinitely many integers $m,n$ such that $p \not | \;m$ and $|t-\frac{m}{n}|<\frac{1}{n^2}$. 

Fix a prime number $p$. We have infinitely many coprime numbers $m,n$ such that $|p\theta -\frac{m}{n}|<\frac{1}{n^2}$, and $p \not | \;m$, i.e.\ $|\theta -\frac{m}{pn}|<\frac{1}{pn^2}$ and $(m,pn) = 1$.

Identifying the circle with the reals mod 1, we set 
$$
f_0(x) = \left \{ \begin{matrix}
		2npx & \mbox{ if } & 0 \leq x \leq \frac{1}{2np} \\
		2-2npx & \mbox{ if } & \frac{1}{2np} \leq x \leq \frac{1}{np} \\
		0 & \mbox{ if } & \frac{1}{np} \leq x \leq 1
		\end{matrix} \right .
$$
and we set $g_0(x) = f_0(x-1/2np)$.
Let $\gamma(f) = f(x-m/np)$ (rational rotation by $m/np$). Notice that $\gamma$ is $np$-periodic (and does not have a smaller period).
Set $f_j(x) = \gamma^j(f_0)$,  $g_j(x) = \gamma^j(g_0)$ for $j = 1,2,\ldots,np-1$. 
One easily verifies that $f_0+f_1+\ldots+f_{np-1} + g_0+g_1+\ldots+g_{np-1} = 1$ and that  $f_jf_k = g_jg_k = 0$ for $j \neq k$.

Notice furthermore that each $f_j$ and $g_j$ are Lipschitz with Lipschitz constant $2np$. For $j = 0,1,\ldots,p-1$, set 
$$
\tilde{f_j} = \sum_{k=0}^{n-1}f_{kp+j}   , \; \tilde{g_j} = \sum_{k=0}^{n-1}g_{kp+j}
$$
Notice that again, the $\tilde{f}_j$'s are pairwise orthogonal, the $\tilde{g}_j$'s are pairwise orthogonal, we have
$$
\tilde{f}_0+\tilde{f}_1+\ldots+\tilde{f}_{p-1}+\tilde{g}_0+\tilde{g}_1+\ldots+\tilde{g}_{p-1}=1,
$$
those functions all have Lipschitz constant $2np$, and 
$$
\gamma(\tilde{f}_j) = \tilde{f}_{j+1\, \textrm{(mod)}\,p}  , \; \gamma(\tilde{g}_j) = \tilde{g}_{j+1\, \textrm{(mod)}\,p}
$$
Now, we note that for all $x$ we have
$$
\left | \gamma(\tilde{f}_j)(x) - \alpha(\tilde{f}_j)(x) \right | 
= 
\left | \tilde{f}_j\left(x - \frac{m}{pn}\right) -   \tilde{f}_j(x - \theta) \right | \leq 2np \cdot \left |\theta - \frac{m}{pn}\right | \leq \frac{2np}{pn^2} = \frac{2}{n}
$$
and the same estimate holds for the $\tilde{g}_j$'s. By choosing $n$ sufficiently large, we have the required almost-permutation estimate.
The other requirements in the definition of  Rokhlin dimension $1$ are immediate. (And clearly, no action on a connected space can have Rokhlin dimension $0$.)
\end{proof}

\section*{Appendix: Crossed products, nuclear dimension and $\mathcal{Z}$-stability}

\setcounter{section}{1}
\setcounter{Thm}{0}

\renewcommand{\thesection}{\Alph{section}}
\label{section:preliminaries}

\begin{Not}
To fix notation we recall the construction of the reduced crossed product. Let $G$ be a discrete group, $\A \hookrightarrow B(H)$ 
a $C^*$-algebra acting faithfully on the  Hilbert space $H$ and $\alpha: G \to \textup{Aut}(\A)$ an action of $G$ on $\A$.
Define $\pi : \A \to B(\ell^2(G) \otimes H)$ by $$\pi (a) (e_g \otimes \xi)= e_g \otimes \alpha_{g^{-1}}(a)\xi$$ and 
$$\lambda_h ( e_g \otimes \xi)= e_{hg} \otimes \xi,$$
where $g,h \in G$, $a \in \A$ and $\xi \in H$. Then $\lambda_g \pi(a) = \pi ( \alpha_g (a)) \lambda_g$ i.e.\;$(\pi , \lambda)$ is a 
covariant representation. The reduced crossed product $\A \rtimes_{\alpha,r} G$ is the 
$C^*$-subalgebra of $B(\ell^2(G) \otimes H)$ generated by $\{ \pi (a)\lambda_g \mid a \in \A, g \in G\}$. This algebra does not depend on 
the choice of the faithful representation of $\A$. The groups we consider in  this paper are either finite or equal to $\Z$, hence amenable. 
For amenable groups the reduced crossed product $\A \rtimes_{\alpha,r} G$ coincides with the universal crossed product $\A \rtimes_{\alpha} G$. 
Using matrix units, $\pi (a)$ can also be written as 
$$
\pi(a)= \sum_{g \in G} e_{g,g} \otimes \alpha_{g^{-1}}(a)
$$
and
$$
\pi(a) \lambda_h = \sum_{g \in G} e_{g,h^{-1}g} \otimes \alpha_{g^{-1}}(a).
$$
Thus if $G$ is finite with $n$ elements then we may regard $\A \rtimes_{\alpha} G$ as a subalgebra of $M_n(\A)$ in a natural way and if
$G=\Z$  we will regard $\A \rtimes_{\alpha} \Z$ as a subalgebra of $B(\ell^2(\Z) \otimes H)$. 
We will usually  drop $\pi$ from the notation and denote $\lambda_g$ by $u_g$.
\end{Not}

We recall that a c.p.\ contraction $\varphi:\A \to \B$ is said to be an order zero map if whenever $x,y$ are positive elements in $\A$ such 
that $xy = 0$ then $\varphi(x)\varphi(y) = 0$.

Order zero maps play a central role in the definition of decomposition rank and nuclear dimension which we recall for the reader's convenience. (Cf.\ \cite{kirchberg-winter} 
and \cite{winter-zach}.)

\label{def-dr-nd} 
\begin{Def}
Let $\A$ be a $\mathrm{C}^{*}$-algebra, $\mathcal{F}  $  a finite-dimensional $\mathrm{C}^{*}$-algebra and $n \in \mathbb{N}$.
\begin{enumerate}
\item A c.p.\ map $\varphi : \mathcal{F}   \to \A$  is $n$-decomposable if there is a decomposition 
\[
\mathcal{F}  =\mathcal{F}  ^{(0)} \oplus \ldots \oplus \mathcal{F}  ^{(n)}
\]
such that the restriction $\varphi^{(i)}$ of $\varphi$ to $\mathcal{F}  ^{(i)}$ has order zero for each $i \in \{0, \ldots, n\}$. 
\item $\A$ has decomposition rank $n$, $\dr \A = n$, if $n$ is the least integer such that the following holds: For any finite subset $\F  \subset \A$ and $\varepsilon > 0$, 
there is a finite-dimensional c.p.c.\ approximation $(\mathcal{F}  , \psi, \varphi)$ for $\F$ with tolerance $\varepsilon$ (i.e., $\mathcal{F}  $ is finite-dimensional, 
$\psi: \A \to \mathcal{F}  $ and $\varphi:\mathcal{F}   \to \A$ are c.p.c.\ and $\|\varphi \psi (b) - b\| < \varepsilon \; \forall \, b \in \F$) 
such that $\varphi$ is $n$-decomposable. If no such $n$ exists, we write $\dr \A = \infty$.  
\item $\A$ has nuclear dimension $n$, $\nd \A = n$, if $n$ is the least integer such that the following holds: For any finite subset $\F  \subset \A$ and 
$\varepsilon > 0$, there is a finite-dimensional c.p.\  approximation $(\mathcal{F}  , \psi, \varphi)$ for $\F$ to within $\varepsilon$ (i.e., $\mathcal{F}  $ is finite-dimensional, 
$\psi: \A \to \mathcal{F}  $ and $\varphi:\mathcal{F}   \to \A$ are c.p.\  and $\|\varphi \psi (b) - b\| < \varepsilon \; \forall \, b \in \F$) such that $\psi$ is c.p.c., and $\varphi$ is 
$n$-decomposable with c.p.c.\ order zero components $\varphi^{(i)}$. If no such $n$ exists, we write $\nd \A = \infty$. 
\end{enumerate}
\end{Def}

Note that in (3) we may assume  $\varphi \circ \psi$ to be contractive by \cite[Remark~2.2(iv)]{winter-zach}. 

\begin{Def}
Let $\mathcal{F}  $ be a finite dimensional $C^*$-algebra. Let $\A$ be a $C^*$-algebra, and let $\delta>0$. A c.p.\  contraction $\varphi:\mathcal{F}   \to \A$ is a $\delta$-order zero map if for any positive contractions $x,y\in \mathcal{F}  $ such that $xy = 0$ we have that $\|\varphi(x)\varphi(y)\|\le\delta$.
\end{Def}

We recall that order zero maps from finite dimensional $C^*$-algebras have the following stability property (\cite[Proposition 2.5]{kirchberg-winter}).
Let $\mathcal{F}  $ be a finite dimensional $C^*$-algebra, then for any $\eps>0$ there is a $\delta >0$ (depending on $\mathcal{F}  $ and $\eps$) such that if $\A$ is any 
$C^*$-algebra and $\varphi:\mathcal{F}   \to \A$ is a $\delta$-order zero map then there is an order zero map $\varphi':\mathcal{F}   \to \A$ such that $\|\varphi - \varphi'\|<\eps$. 
Using this, we easily obtain the following technical Lemma.

\begin{Lemma}
\label{lemma:approx-nuc-dim}
Let $\A$ be a $C^*$-algebra. $\A$ has nuclear dimension at most $n$ if and only if for any finite set $\F \subseteq \A$ and any $\eps>0$ there exists a 
finite dimensional $C^*$-algebra $\mathcal{F}   = \mathcal{F}  ^{(0)} \oplus \ldots \oplus  \mathcal{F}  ^{(n)}$ such that for any $\delta>0$ there exist c.p.\  maps $\psi:\A \to \mathcal{F}  $, $\varphi:\mathcal{F}   \to \A$ 
such that $\psi$ is contractive, $\|\varphi \circ \psi (a) - a\|<\eps$ for all $a \in \F$ and $\varphi|_{\mathcal{F}^{(j)}}$ is a c.p.c.\ $\delta$-order zero map for any $j = 0,1,\ldots,n$.

$\A$ has decomposition rank at most $n$ if the same holds where furthermore $\varphi$ is assumed to be a contraction. 
\end{Lemma}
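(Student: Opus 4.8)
The plan is to derive both equivalences from the stability property of order zero maps quoted immediately before the lemma; the forward implications are essentially formal, and the content lies in the reverse implications.

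For the forward direction in the nuclear dimension case, suppose $\nd\A\le n$. Given $\F$ and $\eps$, choose a c.p.\ approximation $(\mathcal{F},\psi,\varphi)$ with $\psi$ c.p.c., $\|\varphi\psi(a)-a\|<\eps$ on $\F$, and $\varphi$ $n$-decomposable with c.p.c.\ order zero components $\varphi|_{\mathcal{F}^{(j)}}$. Since an exact order zero map is in particular a $\delta$-order zero map for \emph{every} $\delta>0$, the same triple $(\mathcal{F},\psi,\varphi)$ witnesses the right-hand condition for all $\delta>0$ at once. The decomposition rank case is identical, observing that there $\varphi$ is additionally a contraction.

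For the reverse direction, assume the approximate condition and fix $\F$ and $\eps$. First I would invoke the hypothesis with $\eps/2$ in place of $\eps$ to obtain a \emph{fixed} finite-dimensional $\mathcal{F}=\mathcal{F}^{(0)}\oplus\cdots\oplus\mathcal{F}^{(n)}$. With $\mathcal{F}$ now fixed, I apply the stability property of order zero maps to each of the finitely many summands $\mathcal{F}^{(j)}$: for the budget $\eps/(2(n+1))$ there is $\delta_j>0$ so that every $\delta_j$-order zero map $\mathcal{F}^{(j)}\to\A$ lies within $\eps/(2(n+1))$ of an exact c.p.c.\ order zero map. Set $\delta:=\min_j\delta_j$ and feed this $\delta$ back into the hypothesis to get $\psi,\varphi$ with $\psi$ contractive, $\|\varphi\psi(a)-a\|<\eps/2$ on $\F$, and each $\varphi^{(j)}:=\varphi|_{\mathcal{F}^{(j)}}$ a c.p.c.\ $\delta$-order zero map. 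The quantifier order is exactly what makes this work: $\mathcal{F}$ is selected before $\delta$, and $\delta$ depends only on $\mathcal{F}$ and $\eps$. Replacing each $\varphi^{(j)}$ by the exact order zero map $\chi^{(j)}$ furnished by stability and setting $\varphi':=\bigoplus_j\chi^{(j)}$, I obtain $\|\varphi-\varphi'\|\le\sum_j\|\varphi^{(j)}-\chi^{(j)}\|<\eps/2$ (using $\|x_j\|\le\|x\|$ for $x=(x_0,\dots,x_n)$ in the unit ball of $\mathcal{F}$), whence $\|\varphi'\psi(a)-a\|<\eps$ on $\F$ since $\psi$ is contractive. Thus $(\mathcal{F},\psi,\varphi')$ is a c.p.\ approximation with $\psi$ c.p.c.\ and $\varphi'$ $n$-decomposable with c.p.c.\ order zero components, giving $\nd\A\le n$.

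For the decomposition rank statement the only extra point is that $\varphi$ is now assumed contractive, so $\|\varphi'\|\le 1+\eps/2$ and $\varphi'$ itself need not be a contraction. Here I would rescale to $\tilde\varphi:=\varphi'/\max(1,\|\varphi'\|)$: multiplication by a nonnegative scalar preserves order zero of each component and keeps $\tilde\varphi$ c.p.c., while $\|\tilde\varphi-\varphi'\|\le\|\varphi'\|-1$ is controlled by the stability budget. Carrying $\eps/4$-type budgets through the argument then yields $\|\tilde\varphi\psi(a)-a\|<\eps$ on $\F$, so $\dr\A\le n$. I expect the only (modest) obstacle to be the bookkeeping of the quantifier order in the reverse direction — one must fix $\mathcal{F}$ first and only afterwards extract a single $\delta$ uniform over the finitely many summands — together with the contractivity normalization needed for the decomposition rank case.
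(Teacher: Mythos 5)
Your proof is correct and follows exactly the route the paper intends: the paper gives no written proof, merely asserting that the lemma follows "easily" from the stability property of order zero maps quoted just before it, and your argument is precisely that deduction carried out in detail (with the right quantifier order — fix $\mathcal{F}$ first, then extract a uniform $\delta$ from stability — and the rescaling for the decomposition rank case). The only cosmetic point is that the estimate $\|\varphi'\psi(a)-a\|<\eps$ uses $\|a\|\le 1$, so one should note that $\F$ may be assumed to consist of contractions.
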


Recall from \cite{JiaSu:Z} and \cite{TomsWin:ZASH} that the Jiang-Su algebra $\mathcal{Z}$ can be written as an inductive limit of prime dimension drop intervals of the form
\[
\Zh_{p,p+1} = \{f \in C([0,1] , M_p\otimes M_{p+1}) \mid f(0) \in M_p \otimes 1 \; , \; f(1) \in 1 \otimes M_{p+1}\}
\]
and that a separable unital $C^{*}$-algebra $\mathcal{A}$ is $\mathcal{Z}$-stable if and only if it admits almost central  unital embeddings of $\mathcal{Z}_{p,p+1}$ for some $p\ge 2$. With the aid of \cite{rordam-winter} one can give the following useful alternative formulation, cf.\ \cite{winter-dr-Z}.

\begin{Prop}
\label{Z-stable-crit}
Let $\A$ be a separable unital $C^{*}$-algebra. $\A$ is $\mathcal{Z}$-stable if and only if for some $p \ge 2$ there are c.p.c.\ order zero maps 
\[
\Phi:M_{p} \to A_{\infty} \cap A' \mbox{ and } \Psi:M_{2} \to A_{\infty} \cap A'
\]
satisfying
\[
\Psi(e_{11}) = 1_{A_{\infty}} -\Phi(1_{M_{p}}) 
\]
and
\[
\Phi(e_{11}) \Psi(e_{22}) = \Psi(e_{22}) \Phi(e_{11}) = \Psi(e_{22}).
\]
\end{Prop}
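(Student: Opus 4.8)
The plan is to reduce the statement to the known characterization of $\mathcal{Z}$-stability via central sequences of prime dimension drop algebras, and then to translate the datum of a unital $*$-homomorphism $\mathcal{Z}_{p,p+1} \to A_{\infty} \cap A'$ into the order zero datum $(\Phi,\Psi)$ and back. Recall from \cite{JiaSu:Z,TomsWin:ZASH} that a separable unital $C^{*}$-algebra $A$ is $\mathcal{Z}$-stable if and only if for some (equivalently every) $p \ge 2$ there is a unital $*$-homomorphism $\Theta:\mathcal{Z}_{p,p+1} \to A_{\infty} \cap A'$; this is the starting point, and the whole proposition becomes the assertion that the existence of such a $\Theta$ is equivalent to the existence of the pair $(\Phi,\Psi)$.

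For the forward direction I would first produce, inside the building block $\mathcal{Z}_{p,p+1}$ itself, c.p.c.\ order zero maps $\phi:M_p \to \mathcal{Z}_{p,p+1}$ and $\psi:M_2 \to \mathcal{Z}_{p,p+1}$ satisfying $\psi(e_{11}) = 1 - \phi(1_{M_p})$ and $\phi(e_{11})\psi(e_{22}) = \psi(e_{22})\phi(e_{11}) = \psi(e_{22})$. This is a concrete but delicate construction using continuous paths of matrix units in both tensor factors: one arranges the $p$ order zero blocks $\phi(e_{ii})$ to have full rank $p+1$ inside $M_p \otimes 1$ at the endpoint $t=0$ (so that $\phi(1_{M_p})(0)=1$ and $\psi$ vanishes there) and to drop to rank $p$ inside $1 \otimes M_{p+1}$ at $t=1$, the leftover rank-$p$ projection being split into the two mutually equivalent corners $\psi(e_{11})$ and $\psi(e_{22})$, with $\psi(e_{22})$ threaded underneath $\phi(e_{11})$. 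The trace constraint $\tau(\phi(e_{ii})) \in [1/(p+1),1/p]$ is exactly what makes this interpolation possible, and is the concrete manifestation of the dimension drop. Given such $\phi,\psi$, one simply sets $\Phi := \Theta \circ \phi$ and $\Psi := \Theta \circ \psi$; since $*$-homomorphisms carry c.p.c.\ order zero maps to c.p.c.\ order zero maps and preserve the algebraic identities involved, the pair $(\Phi,\Psi)$ has the required properties.

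For the converse I would invoke the structure theorem for order zero maps of \cite{winter-zach}, which turns $\Phi$ and $\Psi$ into genuine $*$-homomorphisms $\tilde{\Phi}:C_0((0,1]) \otimes M_p \to A_{\infty} \cap A'$ and $\tilde{\Psi}:C_0((0,1]) \otimes M_2 \to A_{\infty} \cap A'$ out of the cones. The relation $\Psi(e_{11}) = 1 - \Phi(1_{M_p})$ pins down the cone coordinates at the endpoints, while the subordination relation $\Phi(e_{11})\Psi(e_{22}) = \Psi(e_{22})$ provides the room needed to connect the $M_p$- and $M_{p+1}$-structures. One then checks that these data satisfy precisely the relations presenting the prime dimension drop algebra, so that by the description of $\mathcal{Z}_{p,p+1}$ in \cite{rordam-winter} (cf.\ \cite{winter-dr-Z}) they assemble into a unital $*$-homomorphism $\mathcal{Z}_{p,p+1} \to A_{\infty} \cap A'$; $\mathcal{Z}$-stability then follows from the characterization recalled above.

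The main obstacle is this converse assembly step: one must verify that the order zero relations between $\Phi$ and $\Psi$ are not merely necessary but form a complete set of defining relations for a unital homomorphism out of $\mathcal{Z}_{p,p+1}$, which requires the R{\o}rdam--Winter analysis of the dimension drop algebra and care with the endpoint conditions $f(0) \in M_p \otimes 1$, $f(1) \in 1 \otimes M_{p+1}$. By contrast the forward direction is elementary once the internal maps $\phi,\psi$ are written down, and the reduction to homomorphisms out of $\mathcal{Z}_{p,p+1}$ is standard since $\mathcal{Z}$ is strongly self-absorbing.
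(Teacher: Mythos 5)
Your proposal is correct and takes essentially the same route as the paper: the paper proves this proposition purely by citation, namely the characterization of $\mathcal{Z}$-stability via approximately central unital embeddings of $\mathcal{Z}_{p,p+1}$ (from \cite{JiaSu:Z}, \cite{TomsWin:ZASH}) combined with the R{\o}rdam--Winter presentation of the prime dimension drop algebra by exactly these order zero relations (\cite{rordam-winter}, cf.\ \cite{winter-dr-Z}). Your forward direction (realizing the generating pair $(\phi,\psi)$ inside $\mathcal{Z}_{p,p+1}$ and composing with an embedding into $\A_{\infty}\cap\A'$) and your converse (invoking universality of that presentation to assemble a unital $^*$-homomorphism $\mathcal{Z}_{p,p+1}\to \A_{\infty}\cap\A'$) are precisely the two halves of the R{\o}rdam--Winter result that the paper's citation encodes.
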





\providecommand{\bysame}{\leavevmode\hbox to3em{\hrulefill}\thinspace}
\providecommand{\MR}{\relax\ifhmode\unskip\space\fi MR }
\providecommand{\MRhref}[2]{%
  \href{http://www.ams.org/mathscinet-getitem?mr=#1}{#2}
}
\providecommand{\href}[2]{#2}

\end{document}